\newcommand{\subjclass}[2][2010]{%
  \let\@oldtitle\@title%
  \gdef\@title{\@oldtitle\footnotetext{#1 \emph{Mathematics subject classification.} #2}}%
}
\colorlet{darkblue}{blue!90!black}
\colorlet{darkred}{red!90!black}
\colorlet{dr}{red!90!black}
\newcommand{\ep}{\epsilon}
\newcommand{\eps}{\epsilon}
\newcommand{\leqc}{\lesssim}
\newcommand{\grad}{\nabla}
\newcommand{\norm}[1]{\left|\left| #1 \right|\right|}
\newcommand{\abs}[1]{\left| #1 \right|}
\newcommand{\set}[1]{\left\{ #1 \right\}}
\newcommand{\brak}[1]{\left\langle #1 \right\rangle} 
\newcommand{\R}{\mathbb{R}}
\newcommand{\Real}{\mathbb{R}}
\newcommand{\N}{\mathbb{N}}
\newcommand{\C}{\mathbb{C}}
\newcommand{\Z}{\mathbb{Z}}
\newcommand{\T}{\mathbb{T}}
\renewcommand{\S}{\mathbb{S}}
\newcommand{\Hbf}{{\bf H}}
\newcommand{\Wbf}{{\bf W}}
\newcommand{\Lbf}{{\bf L}}
\newcommand{\dee}{\mathrm{d}}
\newcommand{\ds}{\dee s}
\newcommand{\dv}{\dee v}
\newcommand{\dx}{\dee x}
\newcommand{\dy}{\dee y}
\DeclareMathOperator{\Div}{\mathrm{div}}
\DeclareMathOperator{\Id}{\mathrm{Id}}
\DeclareMathOperator{\curl}{\mathrm{curl}}
\renewcommand{\P}{\mathbf{P}}
\newcommand{\E}{\mathbf{E}}
\newcommand{\EE}{\mathbf E}
\newcommand{\PP}{\mathbf P}
\newtheorem{theorem}{Theorem}[section]
\newtheorem{proposition}[theorem]{Proposition}
\newtheorem{corollary}[theorem]{Corollary}
\newtheorem{lemma}[theorem]{Lemma}
\newtheorem*{lemma*}{Lemma}
\newtheorem{claim}[theorem]{Claim}
\newtheorem{assumption}{Assumption}
\theoremstyle{definition}
\newtheorem{definition}[theorem]{Definition}
\newtheorem{remark}[theorem]{Remark}
\numberwithin{equation}{section}
\begin{document}
\title{Negative regularity mixing for random volume preserving diffeomorphisms}
\author{Jacob Bedrossian\thanks{\footnotesize Department of Mathematics, University of California, Los Angeles, CA 90095, USA \href{mailto:jacob@math.ucla.edu}{\texttt{jacob@math.ucla.edu}}. JB was supported by NSF Award DMS-2108633.} \and Patrick Flynn\thanks{\footnotesize Department of Mathematics, University of California, Los Angeles, CA 90095, USA \href{mailto:pflynn@math.ucla.edu}{\texttt{pflynn@math.ucla.edu}}. } \and Sam Punshon-Smith \thanks{ \footnotesize Department of Mathematics, Tulane University, New Orleans, LA 70118, USA \href{mailto:spunshonsmith@tulane.edu}{\texttt{spunshonsmith@tulane.edu}}. This material was based upon work supported by the NSF Award DMS-1803481.} } 
\maketitle

\begin{abstract}
We consider the negative regularity mixing properties of random volume preserving diffeomorphisms on a compact manifold without boundary.
We give general criteria so that the associated random transfer operator mixes $H^{-\delta}$ observables exponentially fast in $H^{-\delta}$ (with a deterministic rate), a property that is false in the deterministic setting.
The criteria apply to a wide variety of random diffeomorphisms, such as discrete-time iid random diffeomorphisms, the solution maps of suitable classes of stochastic differential equations, and to the case of advection-diffusion by solutions of the stochastic incompressible Navier-Stokes equations on $\T^2$. In the latter case, we show that the zero diffusivity passive scalar with a stochastic source possesses a unique stationary measure describing ``ideal'' scalar turbulence.
The proof is based on techniques inspired by the use of pseudodifferential operators and anisotropic Sobolev spaces in the deterministic setting.
\end{abstract}

\section{Introduction}
\label{sec:intro}
%!TEX root = main.tex

In this paper we study negative regularity mixing by volume-preserving random diffeomorphisms $\{\phi^t\,:\, t\in \mathbb{I}\}$ on a smooth compact Riemannian manifold $\mathcal{M}$ without boundary. Here, the index set $\mathbb{I}$ is either $\R_+$ or $\Z_+$. One of our main motivations comes from fluid mechanics, namely the advection equation
\begin{align}
  \partial_t f_t + u_t \cdot \nabla f_t = 0,\quad f|_{t=0} = f_0  \label{eq:PassScalIntro}
\end{align}
where $f_t$ is a scalar field and $(u_t)_{t \in \R_+}$ is a Lipschitz regular, divergence-free, (time-dependent) velocity field with $\mathcal{M} = \mathbb T^d$.
Naturally, the solution is given by $f_t = f_0 \circ (\phi^{t})^{-1}$ where $\phi^t = \phi^t_{u_0,\omega}$ is the flow map of associated to the velocity field $u_t$. The solution operator
\[
T^t: f_0 \mapsto f_t = f_0 \circ (\phi^t)^{-1}
\]
is often called the transfer operator in the dynamical systems literature.

In the context of Anosov maps, the study of mixing properties of the map $\phi^t$ is closely related to the spectrum of the transfer operator, known as Ruelle Resonances. Introduced by Ruelle within the framework of thermodynamic formalism~\cite{Ruelle-Thermodynamic-2004t,Ruelle1986-jf,Ruelle-Locating-1986c}, these resonances offer refined insights into the decay of correlations. Subsequent work has significantly advanced our understanding of these resonances through the use of anisotropic Sobolev spaces~\cite{Blank2002-vi,Gouezel2005-ff,Liverani2005-qg,Baladi2005-ro,Baladi2007-ey,demers2018gentle,baladi2017quest} and microlocal analysis~\cite{faure2008semi,Faure-Prequantum-2018h,Faure2011-mn,Faure-Semiclassical-2014i,Faure-Semiclassical-2017e,Faure-Fractal-2023u,Faure-Micro-local-2021v}. See section \ref{sec:comments} for more context on some of this work in the context of our main result.

In the case when the velocity field is random (see \cite{bedrossian2021almost,bedrossian2022almost,cooperman2024exponential,dolgopyat2004sample}), it is possible to prove a \emph{quenched mixing result} (or equivalently {\em almost-sure exponential decay of correlations}), namely there exists a deterministic $\gamma > 0$ and a random constant $D = D(\omega)$ satisfying $\EE D^2 <\infty$, such that for all mean zero $f_0 \in H^1$ there holds
\begin{align}
\norm{f_t}_{H^{-1}} \leq D(\omega)e^{-\gamma t} \norm{f_0}_{H^1}. \label{ineq:quenched}
\end{align}
In the case of \eqref{eq:PassScalIntro} or similar examples, the random constant $D$ will also depend on the initial velocity field $u_0$ (but not the initial data $f_0$).

Due to the time-reversibility of transport equations, it is relatively easy to conclude that one cannot obtain an almost-sure exponential decay result like \eqref{ineq:quenched} $H^{-1}$ decay without assuming $f_0$ having some small amount of regularity say $H^{\ep}$. 
In particular one cannot expect decay in $H^{-1}$ if $f_0$ is also taken in $H^{-1}$.

The goal of this work is to prove that for many random maps, negative regularity mixing does hold in an \emph{averaged} sense, namely that for all mean-zero $f_0 \in H^{-\delta}$, there exists a $\mu > 0$ such that
\begin{align*}
\EE \norm{f_t}_{H^{-\delta}}^{2} \lesssim e^{-\mu t} \norm{f_0}_{H^{-\delta}}^{2}. 
\end{align*}
for some fixed $\mu,\delta > 0$ (independent of $f_0$), and also that it holds in a quenched sense but with a constant that depends on $f_0$ (Corollary \ref{cor:quenched}).  

After we state the main results, we include a more extended discussion of both the results and the motivations, however, one can liken the result to something like the non-random multiplicative ergodic theorem \cite{kifer2012ergodic}. Indeed, for each individual scalar field $f_0 \in H^{-\delta}$ (and initial velocity field $u_0$ if applicable) there may exist very specific realizations of $(u_t)$ which obtain arbitrarily bad decay rates in $H^{-\delta}$, but these be so rare that if one averages over the ensemble of velocity fields, they are essentially negligible. 
The motivation and proof draws significant inspiration from the work on anisotropic Sobolev spaces, especially \cite{faure2008semi}.

\subsection{Abstract framework and statement of negative regularity mixing}

Here we outline a general abstract setting for our theorem that applies broadly to a whole class of random maps, including iid random diffeomorphisms, stochastic transport on compact manifolds (i.e. particle trajectories solve SDEs) \cite{Gess2021-wh,Coti-Zelati2024-az}, advection by velocity fields generated by the stochastic Navier-Stokes equations, alternating random shear flows \cite{blumenthal2023exponential}, and other similar settings (see Section \ref{sec:Concrete}).
Our abstract framework is similar to that of \cite{blumenthal2023exponential}, but more general in order to treat the cases studied in \cite{bedrossian2022lagrangian,bedrossian2022almost,bedrossian2021almost,bedrossian2022batchelor}. See Section \ref{sec:Concrete} for explanation on how to connect our abstract result to some of the concrete examples. 

Let $M$ be a $d$-dimensional, smooth, compact Riemannian manifold without boundary and $(\Omega,\mathscr{F},\P)$ be a probability space, $\theta:\Omega\to\Omega$ be a $\P$ preserving transformation and $(u_n)$ be a Feller Markov chain on a Polish space $U$ with a {\em unique} stationary (probability) measure $\mu$ and an associated continuous random dynamical system $\Phi^n_{\omega}:U\to U$. We denote the corresponding skew product flow on $Z = \Omega\times U$ by $\tau(\omega,u) := (\theta\omega,\Phi_{\omega}(u))$ with invariant ergodic product measure $\mathfrak{m} = \P\times \mu$. 

We will consider $\phi_{\omega,u}: M \to M$ a $C^{k_0}$-regular (with $k_0 \geq 2$), random volume preserving diffeomorphism and denote
\[
  \phi^n_{\omega,u} = \phi_{\tau^n(\omega,u)}\circ\ldots\circ \phi_{\tau(\omega,u)}\circ\phi_{\omega,u}
\]
the $n$-fold composition along the skew product flow $\tau$.
We emphasize that we do not assume that each map $\phi_{\omega,u}$ in this composition is iid, so the associated process $x_n = \phi^n_{\omega,u} (x)$, $x\in M$ is \emph{not} assumed Markovian in general.
However, the joint process $(x_n,u_n) = (\phi^n_{\omega,u}(x),\Phi^n_\omega(u))$ on $M\times Z$ is Markovian.

\begin{remark}
To include the case of iid random diffeomorphisms in our framework, one simply dispenses with $(u_n)$ and the relevant Markov process becomes $x_n = \phi^n_\omega(x)$ where
\begin{align*}
\phi^n_{\omega} = \phi_{\theta^n\omega}\circ\ldots\circ \phi_{\theta \omega}\circ\phi_{\omega}. 
\end{align*}
This includes the time-1 maps of well-posed, volume-preserving SDEs; see \cite{Kunita1997-oa}. 
\end{remark}

\subsubsection{Lyapunov structure}
First, we make some assumptions which provide control on how large the derivatives of $\phi^n_{\omega,u}$ can be.
In most settings, $(u_n)$ is the velocity field, and the deviations on $\phi^n$ are stated in terms of the deviations of $(u_n)$.  
Recall that a function $V:U \to [1,\infty)$ is called a \emph{Lyapunov function} for $(u_n)$ if $V$ has bounded sublevel sets $\{V \leq r\}$ and the associated Markov kernel for $(u_n)$, $P(u,A) = \P_{u}(u_1\in A)$ satisfies a Lyapunov-Foster Drift condition: there exists $\delta \in (0,1)$ and $K \geq 0$ such that
\begin{equation}\label{eq:Lyapunov-Foster}
  PV \leq \delta V + K,
\end{equation}
where $PV(u) := \int_{U} V(u^\prime) P(u,\dee u^\prime)$.

In general, we will require a stronger Lyapunov structure on $(u_n)$ and the derivatives of $\phi_{\omega,u}$.
Specifically, we will assume the existence of a two parameter family of Lyapunov functions $V_{\beta,\eta} : U \to [1,\infty)$ defined for $\beta \geq 0$ and $\eta \in (0,1)$, satisfying the following conditions: 
\begin{assumption}[Lyapunov Structure]
\label{ass:Lyapunov-Flow-Assumption}
There exists a two parameter family of Lyapunov functions 
\[
  \{V_{\beta,\eta}\,:\, \beta \geq 1, \eta \in (0,1)\},
\]
satisfying for each $a \in (0,1/\eta)$, $V_{\beta,\eta}^a(u) = V_{\beta a, \eta a}(u)$, such that the following condition holds: $\exists a_* > 1$ such that $\forall \beta \geq 1$, $\forall k \in \N_{\geq 1}$ with $k \leq k_0$ and $\forall b\geq 0$ there exists a $\beta(k,b)\geq \beta$ satisfying $\beta(1,b) = \beta$ such that $\forall \eta \in (0,1/a_*)$, 
\begin{equation}\label{eq:Ck-bound-assumption}
\E_{u}\left(\mathcal Q_k(\phi)^{b} V_{\beta,\eta}(u_1) \right)^{a_*}\leqc_{b,k,a_*,\beta,\eta} V_{\beta(k,b),\eta}(u),
\end{equation}
where $\mathcal Q_k(\phi)$ is given in Definition \ref{def:manifoldCkNorm}. 
\end{assumption}

In what follows, we will omit the parameters $\beta,\eta$ in contexts where they are not important. 

\begin{remark}

Since we are working on a Riemannian manifold, the definition of $\mathcal{Q}_k(\phi)$ is somewhat technical and is defined over a suitable atlas. However, intuitively $\mathcal Q_k(\phi)$ is a measure of the size of the $k$th derivative of $\phi$ and $\phi^{-1}$, heuristically
\[
\mathcal{Q}_k(\phi) \,``="\, \sup_{|\alpha|\leq k }\|D^\alpha\phi\|_{L^\infty} + \|D^\alpha\phi^{-1}\|_{L^\infty}.
\]

The assumption on $\mathcal Q_k(\phi)$ in \eqref{eq:Ck-bound-assumption} is necessary for our pseudodifferential operator method, and is particularly important in bounding the errors produced in Egorov's theorem (see Appendix \ref{appendix:PDOs} below). 
See Section \ref{sec:Concrete} for discussions on the possible forms of $V_{\beta,\eta}$ and proofs of \eqref{eq:Ck-bound-assumption}. 
\end{remark}

\begin{remark}\label{eq:super-lyapunov-remark}
Note, that upon setting $b =0$ in \eqref{eq:Ck-bound-assumption}, an application of Jensen's inequality implies that $V = V_{\beta,\eta}$ satisfies the {\em super-Lyapunov property}, namely that for {\em every} $\ep \in (0,1)$ there is a $K_\ep>0$ such that
\[
  PV \leq (CV)^{1/a_*}\leq \ep V + K_{\ep}.
\]
This is a much stronger condition than a standard Lyapunov-Foster condition \eqref{eq:Lyapunov-Foster}. Nevertheless, it is satisfied for many semilinear, parabolic stochastic PDEs where the nonlinearity is sub-critical with respect to a conserved quantity (e.g. the 2d Stochastic Navier-Stokes equations).
\end{remark}

\subsubsection{The linearization and projective process} \label{sec:LinProj}

Next, we will need some assumptions that encode dynamical information. 
The first is the assumption on the spectral properties of the `projective' Markov semigroup, described below. 
A prominent role here is played by the inverse transpose of the derivative of the flow map, which we denote by $\check{A}_{\omega,u,x}$, defined by
\[
  \check{A}_{\omega,u,x} := (D_x\phi_{\omega,u})^{-\top},
\]
where the transpose is taken with respect to the Riemannian metric on $M$. At each $x\in M$, $\check{A}_{\omega,u,x}$ naturally acts on covectors $\xi \in T^*_xM$ with $\check{A}_{\omega,u,x}\xi \in T^*_{\phi_{\omega,u}(x)}M$ and $\check{A}_{\omega,u,x}^n = (D_x\phi^n_{\omega,u})^{-\top}$ is a linear cocycle on $T^*M$ since
\[
  \check{A}^n_{\omega,u,x} = (D_x\phi^n_{\omega,u})^{-\top} = \check{A}_{\tau^n(\omega,u),\phi^n_{\omega,u}(x)}\circ\ldots\circ \check{A}_{\omega,u,x}.
\]

\begin{remark}
Under Assumption \ref{ass:Lyapunov-Flow-Assumption}, by the multiplicative ergodic theorem \cite{kifer2012ergodic,raghunathan1979proof, oseledets1968multiplicative,viana2014lectures},  the linear cocycle $\check{A}^n_{\omega,u,x}$ has a Lyapunov spectrum $\check{\lambda_1} \geq \check{\lambda_2}\geq \ldots \check{\lambda_d}$ which have the property that $\sum_{j}\check{\lambda}_j  =0$ since $\mathrm{det}(\check{A}_{\omega,u,x}) = 1$. This can be related to the Lyapunov spectrum of the linear cocycle $A^n_{\omega,u,x} = D_x\phi^n_{\omega,u}$ on $TM$ $\lambda_1 \geq \lambda_2 \geq \ldots \geq \lambda_d$ via
\[
  \check{\lambda}_j = -\lambda_{d-j+1},
\]
(see e.g. \cite{blumenthal2023exponential} Lemma B1).
Specifically, if $\lambda_1>0$, by volume preservation, it must be that $\check{\lambda}_1>0$.
\end{remark}

In what follows, we consider $(x_n,\xi_n)$, with $\xi_n = \check{A}^n_{\omega,u,x}\xi$, the induced dynamics on the cotangent bundle $T^*M$. Let $T^*\!P$ denote the Markov semigroup associated to the Markov chain $(u_n,x_n,\xi_n)$ on $U\times T^*M$ defined for each bounded measurable $\varphi$ by
\[
  T^*\!P\varphi(u_0,x_0,\xi_0) := \E\varphi(u_1,x_1,\xi_1).
\]
Additionally denote $(x_n,v_n)$, with
\[
  v_n = \check{A}^n_{\omega,u,x}v/|\check{A}^n_{\omega,u,x}v|,
\]
the ``projectivized'' dynamics on the unit cosphere bundle $\S^*M = \{v\in T^*M\,:\, |v|_* = 1\}$ and denote $\hat{P}$ the Markov semigroup associated to the Markov chain $(u_n,x_n,v_n)$ on $U\times \S^*M$ defined analogously.

Given a Lyapunov function $V(u)$ for $(u_n)$, we will denote the space $\mathrm{Lip}_V(U\times \S^*M)$ to be the space of continuous functions $\psi(u,x,v)$ on $U\times \S^*M$ with finite weighted Lipschitz norm
\begin{equation}\label{eq:lipnorm}
  \|\psi\|_{\mathrm{Lip}_V} := \sup_{z\in U\times S^*M}\frac{|\psi(z)|}{V(u)} + \sup_{z,z^\prime\in U\times S^*M}\frac{\|\psi(z) - \psi(z)\|}{(V(u) + V(u^\prime))d(z,z^\prime)} <\infty, 
\end{equation}
where for $z=(u,x,v)$ and $z^\prime= (u^\prime,x^\prime,v^\prime)$ $d(z,z^\prime) = d(u,u^\prime) + d_{S^*M}\left((x,v),(x^\prime,v^\prime)\right)$,
with $d$ being the metric on $U$ and $d_{S^*M}$ the metric on $S^*M$.
It follows by the Feller property of $(u_n)$ and assumption \eqref{eq:Ck-bound-assumption} that $\hat{P}$ restricts to a bounded linear operator on the space $\mathrm{Lip}_V(U\times S^*M)$, at least if one replaces the time-step $1$ with $N_0$ and considers the process $(\tilde{u}_n,\tilde{v}_n) = (u_{n N_0 }, v_{n N_0})$ (see [\cite{bedrossian2022almost} Lemma 5.2] for more information).
By relabeling, we can assume without loss of generality that $N_0 = 1$. 

Of particular interest are functions which are $-p$ homogeneous, i.e. 
\[
\varphi(u,x,\xi) = |\xi|^{-p}\psi(u,x,\xi/|\xi|),
\]
where $\psi(u,\cdot)$ is a smooth function on $S^*M$ which is homogeneous of degree $0$ in $\xi$.
Evaluating $T^*P$ on such a symbol, we obtain
\[
T^*P\varphi(u,x,\xi) = \hat{P}^p\psi(u,x,\xi/|\xi|),
\]
where $\hat{P}^p$ is the `twisted' semigroup defined by
\[
\hat{P}^p\psi(u,x,v) = \E_{u,x,v} |\check{A}_{\omega,u,x}v|^{-p}\psi(u_1,\phi^1_{\omega,u}(x),v_1).
\]
Similar to $\hat{P}$, $\hat{P}^p$ forms a semigroup of bounded operators on $\mathrm{Lip}(U\times S^*M)$ (again, possibly by increasing the time-step and relabeling) and that when $p=0$ one recovers the projective semigroup $\hat{P}$.
We make the following spectral assumption on $\hat{P}^p$. 
\begin{assumption}[Spectral Gap]\label{ass:SpecGap} \
\begin{enumerate} 
\item[(i)] There exists a $p_0 > 0$ such that for all $p\in [-p_0,p_0]$, $\hat{P}^p$ admits a simple dominant eigenvalue $e^{-\Lambda(p)}$ on $\mathrm{Lip}_V(U\times S^*M)$ with a spectral gap, namely there exists an $r \in (0,e^{-\Lambda(p)})$ such that the spectrum of $\hat{P}^p$ satisfies
  \[
    \sigma(\hat{P}^p)\backslash\{e^{-\Lambda(p)}\} \subseteq B(0,r),
  \]
where $B(0,r)\subset \C$ denotes the open ball of radius $r$ centered at $0$.
\item[(ii)] The eigenvalue satisfies $\Lambda(p) > 0$ for $p \in (0,p_0)$. 
%\item The eigenvalue satisfies $e^{-\Lambda(p)}$ for $p\in (0,p_0)$.
  \item[(iii)] Let $\pi_p$ be the rank-one spectral projector associated with $e^{-\Lambda(p)}$. Then $\psi_p \equiv  \pi_p\bm{1}$ is a dominant eigenfunction satisfying
  \[
  \hat{P}^p\psi_p = e^{-\Lambda(p)}\psi_p,
  \]
  and is bounded below by a positive constant on bounded sets. That is for each bounded set $B \subset U$ there is a $c_B >0$ such that $\inf_{u\in B}\psi_p > c_B$.% \sam{we might want to say that this is essentially the irreducibility condition}
\end{enumerate}
\end{assumption}   

\begin{remark}
Assumption (iii) is essentially the assumption that $(u_n,x_n,v_n)$ is irreducible; see \cite{blumenthal2023exponential,bedrossian2022almost} for discussion.  
\end{remark}

\begin{remark}
Irreducibility and geometric ergodicity of the projective process $(u_n,x_n,v_n)$ in a suitable Wasserstein metric, usually proved via a weak Harris' theorem (see \cite{HM06}), 
implies Part (i) and (iii) with a straightforward spectral perturbation argument (see e.g. \cite{blumenthal2023exponential,bedrossian2021almost} and the references therein).

Note that also, in particular, by setting $p = 0$ Assumption \ref{ass:SpecGap} also {\em implies} geometric ergodicity (again in a suitable metric) of $\hat{P}$ and hence a unique stationary probability measure $\nu \in \mathcal{P}(U\times S^*M)$.
\end{remark}

\begin{remark} 
The quantity $\Lambda(p)$ is called the \emph{moment Lyapunov function} and, at least for $\abs{p}$ close to $0$, Part (iii) implies that for $\PP \times \nu$-almost every $(\omega,u,x,v)$, 
\begin{align*}
\Lambda(p) = -\lim_{n \to \infty} \frac{1}{n} \log \EE \abs{\check{A}^n_{\omega,u,x}v}^{-p}.
\end{align*}
The function $\Lambda(p)$ is also closely related to the probability of finding $\abs{\check{A}^n_{\omega,u,x}v}$ far from the expected $e^{\check{\lambda}_1 n}$ for large $n$. Particularly, it is related to a large deviation principle for the projective chain $(u_n,x_n,v_n)$. See e.g. \cite{baxendale1988large,baxendale1993kinematic} for more discussions. 

One can readily prove that $\Lambda'(0) = \check{\lambda}_1$ (see [Lemma 5.10 \cite{bedrossian2022almost}] for the proof when the cocyle is $A^n_{\omega,u,x}$ instead of $\check{A}^n_{\omega,u,x}$ ).
Hence, positivity of the Lyapunov exponent $\check{\lambda}_1>0$ implies that for $p \ll 1$
\[
\Lambda(p) = p\check{\lambda}_1 + o(p) >0, 
\]
which would then imply Part (ii) of Assumption \ref{ass:SpecGap}. 
\end{remark}

Given the connection between $T^*P$ and $\hat{P}^p$, Assumption \ref{ass:SpecGap} implies that the function
\[
  a_p(u,x,\xi) := |\xi|^{-p}\psi_p(u,x,\xi/|\xi|)
\]
is an eigenfunction of $T^*\!P$ with eigenvalue $e^{-\Lambda(p)}$
\[
  T^*\!P a_p = e^{-\Lambda(p)}a_p.
\]
The function $a_p$ is the starting point of the symbol of a pseudo-differential operator that plays a key role in our proof below.

\subsubsection{Two point geometric ergodicity}

Closely related to the behavior of the twisted Markov semi-group $\hat{P}^p$ and the associated eigenfunctions $\psi_p$, is the behavior of the two point motion. Namely, let $x_n$ and $y_n$ be two different trajectories on $M\times M$ starting from distinct points $x_0$ and $y_0$, that is $x_n = \phi^n_{\omega,u}(x_0)$ and $y_n = \phi^n_{\omega,u}(y_0)$.
In order to avoid reducing to the one point motion, we assume that the starting points $(x_0,y_0)$ do not belong to the diagonal set
\[
  \Delta = \{(x,x)\,:\, x\in M\} \subset M\times M. 
  \]
We denote the associated Markov chain $(u_n,x_n,y_n)$ on $U\times \Delta^c$ by $P^{(2)}$. We remark that $U \times \Delta^c$ is an almost-surely invariant set for the two-point semigroup defined over $U \times M \times M$.
As the space $\Delta^c$ is not compact, one is likely to use a Lyapunov function $\mathcal{V}$ on $U\times \Delta^c$ to control the behavior of the two point motion. As in \cite{bedrossian2022almost,bedrossian2021almost,blumenthal2023exponential} we consider the Lyapunov function $\mathcal{V}_p$ with the following properties:
\begin{align}
\mathcal{V}_p(u,x,y) \approx (d(x,y)^{-p}\vee 1) V_{\beta,\eta}(u) \label{ineq:Vp}
\end{align}
for all $(u,x,y) \in U \times \Delta^c$.
Depending on the setting, this kind of Lyapunov function can be constructed using $\psi_p$ \cite{bedrossian2022almost,bedrossian2021almost,blumenthal2023exponential} or using large deviation estimates on the exit times \cite{dolgopyat2004sample,baxendale1988large}. 
We will need the following assumption on the exponential decay of the two point motion.
\begin{assumption}[Exponential Decay of Two Point Motion]\label{ass:2pt}
  We assume that for all $0 < p \ll 1$ sufficiently small, for all $\beta \geq 1$, and all $\eta \in (0,1)$, $\exists \mathcal{V}_p$ satisfying \eqref{ineq:Vp} which is a Lyapunov function for the two point motion $(u_n,x_n,y_n)$ and for which the Markov process is $\mathcal{V}_p$-geometrically ergodic, namely there exists a $\gamma > 0$ such that for all $x_0,y_0\in M$ and all $\varphi\in C_{\mathcal{V}_p}(U\times \Delta^c)$, there holds
\begin{align*}
\abs{P^{(2)} \varphi(u,x,y) - \int_U \int_M \int_M \varphi\, \dee \mu \dee x \dee y} \lesssim \mathcal{V}_p(u,x,y) \norm{\varphi}_{C_{\mathcal{V}_p}} e^{-\gamma t}. 
\end{align*}
\end{assumption}
Heuristically, we can consider Assumption \ref{ass:2pt} as a strict improvement over Assumption \ref{ass:SpecGap} above, extending the linearized information to the nonlinear two-point dynamics.
Indeed, the proofs of \cite{bedrossian2021almost,bedrossian2022almost,blumenthal2023exponential} used Assumption \ref{ass:SpecGap} together with a Harris' theorem argument using the Lyapunov function $\mathcal{V}_p$ to prove Assumption \ref{ass:2pt}.
However, it is easier to state these as separate assumptions, rather than list all of the individual assumptions required for the Harris' theorem argument to apply.

By a now-standard Borel-Cantelli argument together with a little additional Fourier analysis (see \cite{dolgopyat2004sample} and \cite{bedrossian2022almost,cooperman2024exponential,blumenthal2023exponential}), Assumption \ref{ass:2pt} implies quenched mixing estimates such as \eqref{ineq:quenched}.
We technically do not use such estimates directly, however, the manner in which we employ Assumption \ref{ass:2pt} is nevertheless quite similar to a quenched mixing estimate.

\subsubsection{Main result}\label{sec:main-result}

In what follows, denote $f_n = f_0 \circ (\phi^n_{\omega,u})^{-1}$.
Our main result is the following theorem. 
\begin{theorem} \label{thm:AM}
Under Assumptions \ref{ass:Lyapunov-Flow-Assumption}--\ref{ass:2pt}, there exists constants $\mu >0$, $\delta \in (0,1)$ and $\beta_* \geq 1$, such for each Lyapunov function $V = V_{\beta_*,\eta}$, with $\eta >0$ sufficiently small there holds the following: for all distributions $f_0 \in H^{-\delta}$ with $\int f_0 \dee x = 0$, for all $n \geq 0$, $u_0 \in U$
\begin{equation}
\label{eq:annealedeq}
  \E \left[V(u_n)^{-1}\|f_n\|_{H^{-\delta}}^2\right] \leqc V(u_0) e^{-2\mu n}\|f_0\|_{H^{-\delta}}^2,
\end{equation}
As a corollary, there holds $\forall q\in (0,2)$
\begin{align}
\E \|f_n\|_{H^{-\delta}}^{q}\leqc_q  V^{q}(u_0) e^{-q\mu n} \|f_0\|_{H^{-\delta}}^{q}. \label{ineq:annealedNoV}
\end{align}
\end{theorem}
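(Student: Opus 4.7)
The plan is to transport the spectral gap of the twisted projective semigroup $\hat{P}^p$ on $\mathrm{Lip}_V$ down to a contraction at the level of $H^{-\delta}$, via a pseudodifferential construction inspired by the anisotropic Sobolev spaces used in hyperbolic dynamics. Fix a small $p\in(0,p_0)$ and consider the symbol
\[
a_p(u,x,\xi) = \chi(|\xi|)\,|\xi|^{-p}\,\psi_p\!\left(u,x,\tfrac{\xi}{|\xi|}\right) + (1-\chi(|\xi|)),
\]
with $\chi$ a smooth cutoff vanishing near the origin; by Assumption \ref{ass:SpecGap}, $T^*\!P a_p = e^{-\Lambda(p)} a_p$ on the high-frequency part. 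Using a fixed finite atlas on $M$, quantize $a_p$ as a self-adjoint classical pseudodifferential operator $\mathcal{A}_p(u)$ of order $-p$, and define the random, $u$-dependent anisotropic norm
\[
\|f\|_{\cH_p(u)}^2 := \langle \mathcal{A}_p(u)f,f\rangle_{L^2}.
\]
By the positivity of $\psi_p$ on sublevel sets of $V$ (Assumption \ref{ass:SpecGap}(iii)) and sharp G\aa{}rding, $\|\cdot\|_{\cH_p(u)}\approx\|\cdot\|_{H^{-p/2}}$ uniformly on $\{V\leq R\}$, while globally $\|f\|_{\cH_p(u)}^2 \leqc V(u)\|f\|_{H^{-p/2}}^2$. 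This is what pins down $\delta = p/2$ at the end.

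The heart of the argument is a one-step contraction
\[
\E\!\left[\|f_1\|_{\cH_p(u_1)}^2 \,\big|\, u_0 = u,\, f_0\right] \leq e^{-\Lambda(p)}\|f_0\|_{\cH_p(u)}^2 + E(u,f_0),
\]
where $E$ is strictly subleading. Egorov's theorem (Appendix \ref{appendix:PDOs}) gives
\[
(\phi_{\omega,u})^*\,\mathcal{A}_p(\tau(\omega,u))\,(\phi_{\omega,u})^{-*} = \mathrm{Op}\!\big(a_p\circ\Phi_{\omega,u}\big) + R_{\omega,u},
\]
where $\Phi_{\omega,u}$ is the canonical lift of $\phi_{\omega,u}$ to $T^*M$ (so that $a_p\circ\Phi$ involves exactly the cocycle $\check{A}_{\omega,u,x}$ from Section \ref{sec:LinProj}), and $R_{\omega,u}$ is of PDO order $-p-1$ with seminorms polynomial in the derivatives $\mathcal{Q}_{k_0}(\phi_{\omega,u})$. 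Taking expectation in $\omega$ and using the eigenfunction identity $\E[a_p\circ\Phi_{\omega,u}] = e^{-\Lambda(p)}a_p$ produces the principal contraction; the Egorov remainder, after Cauchy--Schwarz, is bounded by $\|f_0\|_{H^{-p/2-1/2}}^2$ times a power of $\mathcal{Q}_{k_0}(\phi)$ which is integrable by Assumption \ref{ass:Lyapunov-Flow-Assumption} applied with a suitable $\beta(k_0,b)$ in place of $\beta$.

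The residual low-frequency / zero-order bucket $E$ is not mixed by the PDO calculus, and here Assumption \ref{ass:2pt} together with $\int f_0\,\dx = 0$ takes over: for any smooth compactly supported kernel $K$,
\[
\E\iint K(x,y) f_n(x) f_n(y)\,\dx\,\dy = \E\iint K(x,y)\, f_0(\phi_{\omega,u}^{-n}x)\, f_0(\phi_{\omega,u}^{-n}y)\,\dx\,\dy,
\]
and Assumption \ref{ass:2pt} applied to the backward two-point motion bounds the inner expectation by $\mathcal{V}_p(u,x,y)e^{-\gamma n}\|f_0\otimes f_0\|_{C_{\mathcal{V}_p}^*}$ plus the limit $(\int f_0)^2\iint K$, which vanishes by mean-zeroness. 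Integrating \eqref{ineq:Vp} in $(x,y)$ (after a mollification/duality step to trade $C^0$ for $H^{-p/2}$) gives an $e^{-\gamma n}V(u)\|f_0\|_{H^{-p/2}}^2$ bound. Iterating the per-step bound $n$ times and summing the resulting geometric series in the errors produces \eqref{eq:annealedeq} with $\mu=\tfrac12\min(\Lambda(p),\gamma)$ and $\delta=p/2$ for $p$ sufficiently small; the weight $V(u_n)^{-1}$ on the left is produced by choosing $\beta_*$ so that every accumulated $V_{\beta(k,b),\eta}$-loss from \eqref{eq:Ck-bound-assumption} is dominated by $V_{\beta_*,\eta}$, and by invoking the super-Lyapunov property (Remark \ref{eq:super-lyapunov-remark}) to absorb it. The corollary \eqref{ineq:annealedNoV} then follows by H\"older with exponents $(2/q, 2/(2-q))$ applied against a Foster moment bound on $V(u_n)$. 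I expect the main obstacle to be the careful matching of Egorov remainder orders against the admissible $\mathcal{Q}_k(\phi)^b$ moments in \eqref{eq:Ck-bound-assumption}: this matching is precisely what forces $\delta$ (and hence $\mu$) to be small, and it is delicate because the symbolic expansion loses one order per commutator step while each extra order of $\phi$-derivatives costs an additional factor of $V_{\beta(k,b),\eta}$.
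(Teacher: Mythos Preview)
Your high-level architecture matches the paper's: build a symbol from $\psi_p$, use Egorov to get a one-step contraction at high frequency, use two-point mixing (Assumption \ref{ass:2pt}) to handle the low-frequency remainder, and iterate. However, there is a genuine gap in your execution, and it is precisely the one the paper flags in Section \ref{sec:comments} as issue (A).

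The function $\psi_p$ lives only in $\mathrm{Lip}_V(U\times S^*M)$; it is at best $C^1$ in $(x,v)$. Consequently your symbol $a_p(u,x,\xi)=\chi(|\xi|)|\xi|^{-p}\psi_p(u,x,\xi/|\xi|)+(1-\chi)$ is \emph{not} in any H\"ormander class $S^m_\rho$, and you cannot invoke either Egorov's theorem or (sharp or weak) G{\aa}rding on it: both results require control of arbitrarily many symbol seminorms, and the remainder bounds in Theorem \ref{theorem:egorov} depend on $[a]^{m,\rho}_{k_0}$ for $k_0$ much larger than $1$. Your claim that $R_{\omega,u}$ has order $-p-1$ therefore has no basis. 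The paper resolves this by a paradifferential-style regularization (Section \ref{sec:symbol}): one mollifies $\psi_p$ in $(x,v)$ at a frequency-dependent scale $h\sim|\xi|^{-\eps}$ to produce $a_{p,\eps}\in S^{-p}_{1-\eps}$ (Lemma \ref{lemma:apSymbolBddness}). The price is that $a_{p,\eps}$ is no longer an exact eigenfunction of $T^*P$; the defect $r_{p,\eps}=T^*Pa_{p,\eps}-e^{-\Lambda(p)}a_{p,\eps}$ lies only in $S^{-p-\eps}_{1-2\eps}$, so the one-step error is controlled by $\|f\|_{H^{-(p+\eps)/2}}^2$ rather than $\|f\|_{H^{-(p+1)/2}}^2$. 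This, not the Egorov commutator expansion you anticipate at the end, is what forces $\delta$ small and makes the interpolation against the low-frequency estimate (Lemma \ref{lemma:lowFreqMixing}) delicate.

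A second, smaller gap: your low-frequency step says ``after a mollification/duality step to trade $C^0$ for $H^{-p/2}$'' but does not supply one. The paper proves this in three steps (Lemma \ref{lemma:lowFreqMixing}): first an $L^2\to H^{-(d+3)/2}$ decay from Assumption \ref{ass:2pt}, then a crude $H^{-1}\to H^{-(d+3)/2}$ growth bound from Assumption \ref{ass:Lyapunov-Flow-Assumption}, then interpolation between the two with an optimized mollification scale. Simply pairing against $\mathcal{V}_p$ and integrating as you suggest does not yield an $H^{-p/2}$ right-hand side.
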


\begin{remark}
We do not currently know if the result holds for $q > 2$, nor do we know how large one can make $\delta$; see Section \ref{sec:comments} for more discussion.
\end{remark}

\begin{remark}
In the case that $\phi^1_{\omega,u} = \phi^1_\omega$ are iid diffeomorphisms (as would happen with the stochastic flow of diffeomorphisms generated by SDEs with smooth vector fields \cite{Kunita1997-oa}) we can dispense with $V$ and hence \eqref{ineq:annealedNoV} holds for $\eps = 0$ and simply becomes
\begin{equation}
\label{eq:annealedeq}
  \E \|f_n\|_{H^{-\delta}}^2 \leqc e^{-2\mu n}\|f_0\|_{H^{-\delta}}^2.
\end{equation}
\end{remark}

\begin{remark}
For distributions that cannot be identified with an $L^1$ function, we can define $\int f_0 \dee x$ as the distribution acting on the smooth function $\phi \equiv 1$. 
\end{remark}

\begin{remark}
A related result was proven in \cite{Coti-Zelati2024-az}, in the setting of the Kraichnan model, where the authors prove an exact identity for the averaged exponential decay of negative Sobolev norms. However, the proof in \cite{Coti-Zelati2024-az} relies on the specific structure of the Kraichnan model and does not extend to the general setting considered here. Additionally, as the self similarity required in \cite{Coti-Zelati2024-az} makes it challenging to define an associated flow map, their results do no immediately follow from Theorem \ref{thm:AM}.
\end{remark}

As a corollary of Theorem \ref{thm:AM}, we obtain the following quenched mixing estimate with a random constant depending on the initial data.
\begin{corollary}[Quenched Result]\label{cor:quenched}
Let $\mu$ and $\delta$ and $\beta_*$ be as in Theorem \ref{thm:AM}. Then, for all $f_0 \in H^{-\delta}$ with $\int f_0 \dee x = 0$, and $V = V_{\beta_*, \eta}$ with $\eta$ suitably small, there exists a random constant $K(f_0,u_0)$ (depending on $u_0$ and the initial $f_0$) such that there holds for all $n \geq 0$ and $u_0 \in U$,
\[
\|f_n\|_{H^{-\delta}} \leqc K(u_0,f_0) e^{-\mu n/2} V(u_0)\|f_0\|_{H^{-\delta}}.
\]
Moreover, $K$ has uniform (in $f_0$ and $u_0$) moments $\E K^q \leqc_q 1$, $\forall q\in (0,2)$.
\end{corollary}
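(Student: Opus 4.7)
The plan is to extract Corollary \ref{cor:quenched} from Theorem \ref{thm:AM} by a standard supremum-of-tail argument, treating the random constant as the maximum (over $n$) of a weighted decay ratio.

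Assuming $\|f_0\|_{H^{-\delta}} > 0$ (otherwise $f_n \equiv 0$ and the statement is trivial), I define
\begin{equation}
K(u_0,f_0) := \sup_{n\geq 0}\,\frac{\|f_n\|_{H^{-\delta}}\,e^{\mu n/2}}{V(u_0)\,\|f_0\|_{H^{-\delta}}}.
\end{equation}
Then the deterministic bound $\|f_n\|_{H^{-\delta}}\leq K(u_0,f_0)\,e^{-\mu n/2}V(u_0)\,\|f_0\|_{H^{-\delta}}$ holds by construction for every $n\geq 0$, so the task reduces to showing the uniform moment bound $\E K^q\lesssim_q 1$ for each $q\in(0,2)$.

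To estimate the moments, I would use the crude bound $\sup_n a_n^q \leq \sum_n a_n^q$ for nonnegative sequences, giving
\begin{equation}
K(u_0,f_0)^q \leq \sum_{n\geq 0}\frac{\|f_n\|_{H^{-\delta}}^q\,e^{q\mu n/2}}{V^q(u_0)\,\|f_0\|_{H^{-\delta}}^q}.
\end{equation}
Taking expectation and inserting the annealed bound \eqref{ineq:annealedNoV} from Theorem \ref{thm:AM} yields
\begin{equation}
\E K(u_0,f_0)^q \;\lesssim_q \sum_{n\geq 0} e^{q\mu n/2}\,\frac{V^q(u_0)\,e^{-q\mu n}\,\|f_0\|_{H^{-\delta}}^q}{V^q(u_0)\,\|f_0\|_{H^{-\delta}}^q} \;=\;\sum_{n\geq 0}e^{-q\mu n/2},
\end{equation}
which is a convergent geometric series depending only on $q$ and $\mu$; in particular the bound is uniform in both $u_0$ and $f_0$, as required. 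Note that the split $e^{-\mu n}=e^{-\mu n/2}\cdot e^{-\mu n/2}$ is precisely what creates the slack between the rate stated in the quenched bound ($e^{-\mu n/2}$) and the rate in the annealed bound ($e^{-\mu n}$), and this is the only place where the factor of $1/2$ in the exponent is used.

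There is essentially no conceptual obstacle here: the argument is purely a Chebyshev/Borel–Cantelli-style conversion from an annealed to a quenched statement. The only subtle point is that one must linearize the constant in $f_0$ and $u_0$ by dividing by $V(u_0)\|f_0\|_{H^{-\delta}}$ before taking the supremum, so that the definition of $K$ is scale-invariant in $f_0$ and the resulting moment bound is genuinely uniform. One could alternatively use a Chebyshev inequality plus Borel–Cantelli to obtain an almost-sure bound first and then extract moments, but the direct summation above is simpler and gives the stated moment estimate in one step.
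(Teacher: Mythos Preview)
Your proof is correct and essentially identical to the paper's own argument: both define $K$ as the supremum over $n$ of the normalized ratio $e^{\mu n/2}\|f_n\|_{H^{-\delta}}/(V(u_0)\|f_0\|_{H^{-\delta}})$, bound $K^q$ by the corresponding sum, and apply the annealed estimate \eqref{ineq:annealedNoV} to obtain a convergent geometric series.
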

\begin{proof}
Clearly we can write
\[
  \|f_n\|_{H^{-\delta}} \leqc K(u_0,f_0) e^{-\mu n/2} V(u_0)\|f_0\|_{H^{-\delta}}
\]
where $K$ is defined by
\[
K(u_0,f_0) = \max_{n\geq 0}\frac{e^{\mu n/2}\|f_n\|_{H^{-\delta}}}{V(u_0)\|f_0\|_{H^{-\delta}}}.
\]
To see that $K$ has finite moments we estimate
\[
\E K^q \leq \sum_{n\geq 0}\frac{e^{\mu n q/2}\E\|f_n\|_{H^{-\delta}}^q}{V(u_0)^q\|f_0\|_{H^{-\delta}}^q} \leqc_q \sum_{n\geq 0} e^{-q\mu n/2} \leqc_q 1.
\]
\end{proof}

\subsection{Comments on the proof of Theorem \ref{thm:AM}} \label{sec:comments}

The proof of Theorem \ref{thm:AM} is inspired primarily by the pioneering work of Faure, Roy and Sjostrand \cite{faure2008semi} and the subsequent works \cite{Faure-Prequantum-2018h,Faure2011-mn,Faure-Semiclassical-2014i,Faure-Semiclassical-2017e,Faure-Fractal-2023u,Faure-Micro-local-2021v}. In these works the authors use microlocal analysis to construct special anisotropic Sobolev spaces (i.e. spaces with different regularity in different directions in frequency space) on which one can prove spectral a gap, or at least quasi-compactness, for the transfer operator. This microlocal approach as also been used in the context of Anosov flows generated by a vectorfield \cite{Dyatlov2016-ss,Dyatlov2015-uj}. See also \cite{Blank2002-vi,Gouezel2005-ff,Liverani2005-qg,Baladi2005-ro,Baladi2007-ey,demers2018gentle,baladi2017quest} for other works using anisotropic Banach spaces of functions to study spectra of the transfer operator.

The motivation of these anisotropic spaces is to find a space $X$ with
\begin{align*}
H^{-\alpha} \subset X \subset H^{\alpha}, 
\end{align*}
such that for average-zero observables, the scalars decay exponentially in $X$ due to a spectral gap of the transfer operator $\mathcal{T}f := f \circ \phi$ (or to at least prove a quasi-compactness estimate that implies localization of the essential spectrum) 
\begin{align}
\norm{f \circ \phi^n}_X \lesssim e^{-\mu n} \norm{f}_X. \label{ineq:Xspec}
\end{align}
In settings where this is possible, it yields a significantly more precise result than quenched mixing estimates.  
In the microlocal approach of \cite{faure2008semi} (also in \cite{Dyatlov2015-uj,Dyatlov2016-ss}), these spaces are found by building a Lyapunov function for the linearized process on $T^\ast M$, $a(x,\xi)$, and using it to define a pseudo-differential operator with a suitable quantization procedure.
The space one obtains the essential spectrum estimates in is then defined via the norm (at least heuristically)
\begin{align*}
\norm{f}_X = \|\mathrm{Op}(a)f\|_{L^2},
\end{align*}
where $\mathrm{Op}(a)$ denotes the pseudo-differential operator associated to the symbol $a$. 
In the case of Anosov diffeomorphisms, one chooses $a(x,\xi) \approx \brak{\xi}^{m}$ for $\xi$ on the tangent to the unstable manifold through $x$ and $\approx \brak{\xi}^{-m}$ for $\xi$ on the tangent to the stable manifold through $x$ (the angle between the tangents is uniformly bounded away from zero by assumption of uniform hyperbolicity). 
After a suitable regularization procedure and a variable-order Egorov's theorem, one can use this symbol to construct a norm $X$ and prove a Lasota-Yorke-type estimate which implies quasi-compactness of the transfer operator (see \cite{faure2008semi} for more details).

Again considering the case of Anosov diffeomorphisms, one can observe that a negative regularity mixing estimate such as Theorem \ref{thm:AM} cannot possibly hold for deterministic maps. Indeed, by concentrating the initial distribution in $H^{-\delta}$ close to the stable manifolds, one can obtain arbitrarily slow mixing rates for certain sequence of pathological initial data. 
For the random map case, the difference here is that for each fixed initial distribution, the assumptions in Section \ref{sec:LinProj} basically rule out the possibility that a positive probability set of $\omega \in \Omega$ results in stable manifolds that line up badly to produce poor decay rates. 
One can wonder exactly how large of a moment one can take in Theorem \ref{thm:AM}, i.e. under what conditions the set of bad $\omega$ remains negligible; our proof currently yields at most second moments. 
Similarly, one can wonder how large $\delta$ can be taken, i.e. whether one can take $\delta$ all the way down to $\delta > (d-1)/2$ or even $\delta > d/2$ (going further is clearly impossible as point masses cannot be mixed). 

In order to prove Theorem \ref{thm:AM}, we use a Lyapunov function (now again in the stochastic process sense) for the linearized process on $U \times T^\ast M$ defined in Section \ref{sec:LinProj} and use this as a symbol $a$ to obtain exponential decay estimates on the quantity
\begin{align*}
\brak{\mathrm{Op}(a)f,f}. 
\end{align*}
We use a regularization of the symbol
\begin{align*}
a(u,x,\xi) = \frac{1}{\abs{\xi}^p}\psi_p(u,x,\xi), 
\end{align*}
which by Assumption \ref{ass:SpecGap}, satisfies a spectral-gap type estimate for the linearized process Markov semigroup. 
Formally, we could then expect by Egorov's theorem
\begin{align*}
\EE \brak{\mathrm{Op}(a)f \circ \phi^n ,(f\circ\phi^n)} \approx e^{-\Lambda(p) n} \brak{f,\mathrm{Op}(a) f}.
\end{align*}
Given the lower bounds on $\psi_p$ in Part (iii) of Assumption \ref{ass:SpecGap}, by G{\aa}rding's inequality, we can heuristically expect that the quantization of this symbol would satisfy something like (ignoring $V$ for a moment) 
\begin{align*}
\brak{\mathrm{Op}(a) f,f} \lesssim \norm{f}_{H^{-p/2}}^2,
\end{align*}
which would suggest Theorem \ref{thm:AM}. 
The most obvious way this intuition fails is that both Egorov's theorem and G{\aa}rding's inequality only control high frequencies and both leave an error in lower regularity, for example the most one can hope for from G{\aa}rding's inequality is something like the following for some constants $\zeta,C> 0$, 
\begin{align}
\zeta \norm{f}_{H^{-p/2}}^2 - C \norm{f}_{H^{-\frac{d+3}{2}}}^2 \leq \brak{f,\mathrm{Op}(a) f}. \label{ineq:GardingIntro}
\end{align}
This is analogous to why one only directly obtains Lasota-Yorke type estimates in the deterministic setting, rather than direct exponential decay estimates.
These errors will need to be dealt with carefully and in particular, require us to first prove a $H^{-p/2} \to H^{-\frac{d+3}{2}}$-type mixing estimate (this is done using Assumption \ref{ass:2pt}; see Section \ref{sec:lowDecay}).
Note that even this kind of estimate is false for deterministic maps. 
However, there are two more reasons the above intuition is na\"{i}ve: (A) the regularity of $\psi_p$ is limited to $C^1$ and so the symbol must be suitably regularized and, perhaps most importantly, (B) the positivity and regularity of the symbol depend badly on the derivatives of $\phi^t_{u_0,\omega}$ which are unbounded (measured by the surrogate $u$), which means that the $\zeta,C$ in the G{\aa}rding's inequality \eqref{ineq:GardingIntro} and the errors in Egorov's theorem would become time-dependent (or the regularization would need to be time-dependent).
These issues present the main difficulties in proving Theorem \ref{thm:AM}.
In Section \ref{sec:symbol}, the regularization procedure of the symbol is presented and the basic properties are verified.
In Section \ref{sec:mixing} the main arguments in Theorem \ref{thm:AM} are given, namely an $H^{-p/2} \to H^{-\frac{d+3}{2}}$-type mixing estimate and its use, together with Assumption \ref{ass:Lyapunov-Flow-Assumption}, to absorb the low frequency error terms coming from Egorov's theorem and G{\aa}rding's inequality to yield Theorem \ref{thm:AM}. 

\section{Applications} \label{sec:Concrete}

Here we outline several relevant applications of our general framework to examples of interest, most notably stochastic flow for SDEs and flows generated by the stochastic Navier-Stokes equations. Additionally, we state some important applications to the advection diffusion equation with a stochastic source term and the existence of a unique stationary measure in the zero-diffusivity limit.

\subsection{Examples} \label{sec:examples}

\subsubsection{IID diffeomorphisms and stochastic flows}
A simple, but wide, class of examples our theorem applies to are iid random diffeomorphisms, such as the case of the stochastic flow of diffeomorphisms associated to an SDE on a compact, Riemannian manifold $(\mathcal{M},g)$.
Consider smooth divergence-free vector fields $X_0,X_1,...,X_r$ on $(\mathcal{M},g)$, then the SDE
\begin{align*}
\dee x_t = X_0(x_t) \dee t + \sum_{j=1}^r X_j(x_t) \circ \dee W_t,
\end{align*}
defines a stochastic flow of diffeomorphisms $x_t = \phi^t_\omega(x_0)$ (see \cite{Kunita1997-oa} for details).
As the vector fields are divergence free, $\phi^t_\omega$ is volume-preserving almost-surely, i.e. the Riemannian volume measure is almost-surely invariant.  
The projective process solves a similar SDE on the sphere bundle $\mathbb S \mathcal{M}$ \cite{baxendale1989lyapunov} (i.e. the unit tangent bundle), denoted 
\begin{align*}
\dee z_t = \tilde{X}_0(z_t) \dee t + \sum_{j=1}^r \tilde{X}_j(z_t) \circ \dee W_t, 
\end{align*}
with $z = (x,v)$ and the `lifted' vector fields satisfy 
\begin{align*}
\tilde{X} = \begin{pmatrix} V(x) \\ (I - v\otimes v) \grad X(x) v \end{pmatrix}. 
\end{align*}
As we explain in more detail below in Section \ref{sec:Checking}, \cite{dolgopyat2004sample} provides checkable sufficient conditions for these flows to satisfy Assumption \ref{ass:SpecGap}--\ref{ass:2pt}, at least when combined with the construction of $\psi_p$ found in \cite{bedrossian2022almost}; see also the earlier work of \cite{baxendale1988large,carverhill1987furstenberg,baxendale1989lyapunov}.
In particular, a quenched mixing estimate \eqref{ineq:quenched} was proved in \cite{dolgopyat2004sample}, specifically for any $\alpha \in (0,1)$, $\exists \gamma > 0$ (deterministic)
\begin{align}
\norm{f \circ \phi^n_\omega}_{C^{0,-\alpha}} \lesssim D(\omega) e^{-\gamma n}\norm{f}_{C^{0,\alpha}}, \label{ineq:C0quenched}
\end{align}
with $\EE D^2 < \infty$. 
Our results now additionally prove that for $0 < p \ll 1$ (the lack of $V$ implies we can take $\eps = 0$), 
\begin{align*}
\EE \norm{f \circ \phi^n}_{H^{-p}}^2 \lesssim e^{-\mu n}\norm{f}_{H^{-p}}^2. 
\end{align*}
Another concrete case of iid random diffeomorphisms are the Pierrehumbert flows \cite{blumenthal2023exponential}, namely the case of transport by alternating shear flows on $\mathbb T^d$. 
For example, in 2D, the time-one map random map $\phi_\omega:\T^d \to \T^d$ could be given by $\phi_\omega( (x,y) ) = (x_\ast,y_\ast)$, where
\begin{align*}
x_\ast & =  x + A(\omega)\sin (y + \gamma(\omega)) \\
y_\ast & =  y + A'(\omega)\sin (x_\ast + \gamma'(\omega)).
\end{align*}
where $A,A',\gamma,\gamma'$ are suitable independent random variables, for example each drawn uniformly from $(-\pi,\pi)$ suffices. 
Assumptions \ref{ass:SpecGap}--\ref{ass:2pt} were proved in \cite{blumenthal2023exponential}, wherein a quenched exponential mixing estimate such as \eqref{ineq:C0quenched} was proved and a suitable $\psi_p$ was constructed. Theorem \ref{thm:AM} now provides also the estimate \eqref{eq:annealedeq}. 

\subsubsection{Stochastic Navier-Stokes equations}
Let us briefly explain how to apply Theorem \ref{thm:AM} to the setting of stochastic Navier-Stokes in $\mathbb T^2$, as in works of \cite{bedrossian2022almost,bedrossian2021almost} and \cite{cooperman2024exponential}.
The PDE in question is given by the following in $\mathbb T^2$
\begin{equation}\label{eq:SNS}
\begin{dcases}
\,\dee u_t + \left( u_t \cdot \grad u_t + \grad p_t - \nu \Delta u_t\right) \dee t + Q \dee W_t \\ 
\,\Div u_t = 0 \, ;
\end{dcases}
\end{equation}
in 3D the viscosity must be replaced by a suitable hyperviscosity $-\nu \Delta \mapsto \nu (-\Delta)^2$ but this case otherwise also works. 
Let us now explain the operator $Q W_t$. 
Following the convention used in \cite{weinan2001ergodicity}, we define the following real Fourier basis for functions on $\T^d$ by
 \[\label{eq:Fourier-Basis}
 e_k(x) = \begin{cases}
 \sin(k\cdot x), \quad& k \in \Z^d_+\\
 \cos(k\cdot x),\quad& k\in \Z^d_-,
 \end{cases}
 \]
where $\Z_+^d = \{(k_1,k_2,\ldots k_d)\in \Z^d : k_d >0\}\cup\{(k_1,k_2,\ldots k_d)\in \Z^d \,:\, k_1>0, k_d=0\}$ and $\Z_-^d = - \Z_+^d$. 
We set $\Z^d_0 := \Z^d \setminus \set{0,\ldots, 0}$ and define $\{\gamma_k\}_{k\in \Z^d_0}$ a collection of full rank $d\times (d-1)$ matrices satisfying $\gamma^\top_k k = 0$, $\gamma_k^\top\gamma_k=\Id$, and $\gamma_{-k} = - \gamma_k$. Note that in dimension $d=2$, $\gamma_k$ is just a vector in $\R^2$ and is therefore given by $\gamma_k = \pm k^{\perp}/|k|$. In dimension three, the matrix $\gamma_k$ defines a pair of orthogonal vectors $\gamma_k^{1},\gamma_k^2$ that span the space perpendicular to $k$. 
Next, we define the natural Hilbert space on velocity fields $u:\T^d \to \R^d$ by
\begin{align}
  \Lbf^2 := \set{u \in L^2(\mathbb T^d;\Real^d)\,: \,\int u\, \dx = 0,\quad \Div u = 0},   
\end{align}
with the natural $L^2$ inner product. Let $W_t$ be a cylindrical Wiener process on $\Lbf^2$ with respect to an associated canonical stochastic basis $(\Omega,\mathscr{F},(\mathscr{F}_t),\P)$ and $Q$ a Hilbert-Schmidt operator on $\Lbf^2$, diagonalizable with respect the Fourier basis on $\Lbf^2$.
In the works \cite{bedrossian2021almost,bedrossian2022almost}, the operator $Q$ was assumed to satisfy the following regularity and non-degeneracy assumption
\begin{assumption}[Assumptions in \cite{bedrossian2021almost,bedrossian2022almost}] \label{a:Highs}
  There exists $\alpha$ satisfying $\alpha > \frac{5d}{2}$ and a constant $C$ such that
\[
	\frac{1}{C}\|(-\Delta)^{-\alpha/2}u\|_{\Lbf^2} \leq \|Qu\|_{\Lbf^2} \leq C\|(-\Delta)^{-\alpha/2}u\|_{\Lbf^2}.
\]
\end{assumption}
In the work \cite{cooperman2024exponential} the lower bound assumption was dropped and replaced with the following much weaker assumption. 
\begin{assumption}[(Essentially) Assumptions in \cite{cooperman2024exponential}] \label{ass:CR}
The $Q$ is compactly supported in frequency and satisfies the H\"ormander hypoellipticity condition given in \cite{HM06,HM08} in 2D and \cite{RomitoXu11} in 3D. 
We will additionally assume that all of the modes with $\abs{k}_{\ell^\infty} \leq 1$ are forced.
\end{assumption}
Theorem \ref{thm:AM} can be applied to both of these cases. 
We define our primary phase space of interest to be velocity fields with sufficient Sobolev regularity (under Assumption \ref{ass:CR} we can choose $\sigma > \frac{d}{2} + 3$ arbitrarily): 
\[
	\Hbf := \set{u \in H^\sigma(\T^d, \R^d)\, :\, \int u \, \dx = 0,\quad \Div u = 0}, \quad \text{where}\quad \,\sigma \in (\alpha-2(d-1), \alpha  - \tfrac{d}{2}). 
\]
Note we have chosen $\alpha$ sufficiently large to ensure that $\sigma > \frac{d}{2} + 3$ so that $\Hbf \hookrightarrow C^3$. 
Since we will need to take advantage of the ``energy estimates'' produced by the vorticity structure of the Navier-Stokes equations in $2D$, we find it notationally convenient to define the following dimension dependent norm
\begin{equation}\label{eq:vorticity-norm}
\|u\|_{\Wbf} := \begin{cases}
\|\curl u\|_{\Lbf^2} & d=2\\
\|u\|_{\Lbf^2} & d=3.
\end{cases}
\end{equation}

The following well-posedness theorem is classical (see e.g. \cite{KS}). 
\begin{proposition} \label{prop:WP}
For all initial data $u\in \Hbf$,
there exists a $\P$-a.s. unique, global-in-time, $\mathscr{F}_t$-adapted mild solution $(u_t)$ to \eqref{eq:SNS} satisfying $u_0 = u$. 
Moreover, $(u_t)$ defines a Feller Markov process on $\Hbf$ and the corresponding Markov semigroup has a unique stationary probability measure $\mu$ on $\Hbf$.
\end{proposition}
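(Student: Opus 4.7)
\emph{Proposal.} The statement is classical and my plan would be to assemble it from standard ingredients rather than reprove everything in detail; the proof divides into (i) pathwise well-posedness of mild solutions in $\mathbf{H}$, (ii) the Feller property of the associated Markov semigroup $P_t$ on $\mathbf{H}$, and (iii) existence and uniqueness of an invariant probability measure on $\mathbf{H}$. The main obstacle is not (i)--(ii), which follow from standard parabolic SPDE theory, but (iii), and specifically uniqueness under Assumption \ref{ass:CR}, which relies on the asymptotic strong Feller machinery from \cite{HM06,HM08} in 2D and \cite{RomitoXu11} in 3D.

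For (i), I would follow the usual Galerkin approach. Since $Q$ is Hilbert--Schmidt into $\mathbf{L}^2$ (with plenty of regularity under Assumption \ref{a:Highs}, and compact frequency support under Assumption \ref{ass:CR}), the stochastic convolution $z_t = \int_0^t e^{-\nu(-\Delta)^{\kappa}(t-s)} Q\, \dee W_s$ ($\kappa=1$ in 2D, $\kappa=2$ in 3D hyperviscous) has continuous paths in $H^{\sigma+\epsilon}$. Writing $u = z + v$, the remainder $v$ solves a random PDE with the Navier--Stokes drift and zero initial forcing, for which I would run the standard fixed-point argument locally in time and then close a global a priori bound. In 2D I would apply It\^o's formula to $\tfrac12\|\curl u\|_{\mathbf{L}^2}^2$ to obtain
\begin{equation*}
\dee \tfrac12\|\curl u\|_{\mathbf{L}^2}^2 + \nu\|\grad \curl u\|_{\mathbf{L}^2}^2\,\dt = \tfrac12 \tr(Q^\ast (-\Delta) Q)\,\dt + \langle \curl u, \curl Q\, \dee W_t\rangle,
\end{equation*}
and bootstrap to $H^\sigma$ using the smoothing of $e^{-\nu t(-\Delta)^\kappa}$ together with the trace-class regularity of $Q$. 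The 3D hyperviscous case is easier because $\nu(-\Delta)^2$ dominates the nonlinearity at the $H^\sigma$ level. This delivers a unique, global $\mathscr{F}_t$-adapted mild solution in $C([0,\infty);\mathbf{H})$.

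For (ii), continuous dependence on initial data follows from Gronwall applied to the difference of two solutions driven by the same noise, which yields $\mathbb{E}\|u_t(u) - u_t(u')\|_{\mathbf{H}} \to 0$ as $u'\to u$; this gives the Feller property of $P_t$ on $\mathbf{H}$. For (iii), existence of an invariant measure is Krylov--Bogoliubov: the energy/enstrophy It\^o identities together with the regularity of $Q$ give uniform-in-time bounds of the form $\sup_{t\geq 0}\mathbb{E}\|u_t\|_{\mathbf{W}}^2 \lesssim 1$, and a further elliptic bootstrap using the mild formulation upgrades this to tightness of the time-averaged laws in $\mathbf{H}$. Uniqueness is the substantive point: under Assumption \ref{a:Highs}, the non-degeneracy of $Q$ and its smoothing properties give strong Feller plus irreducibility (via small-noise control arguments), so Doob--Khasminskii applies. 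Under the weaker Assumption \ref{ass:CR}, I would invoke the asymptotic strong Feller property established in \cite{HM06,HM08} (2D) and \cite{RomitoXu11} (3D hyperviscous) together with topological irreducibility of the control system implied by the H\"ormander condition plus forcing of the low modes, which together imply uniqueness of the invariant measure on $\mathbf{H}$. The hard part here is purely citational: no new argument is needed beyond confirming that the phase space $\mathbf{H}$ chosen here is consistent with the spaces used in those references, which is guaranteed by the choice $\sigma > d/2 + 3$.
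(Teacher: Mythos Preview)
Your proposal is correct, and in fact it is considerably more detailed than what the paper provides: the paper does not give any proof of this proposition. It simply states that the well-posedness theorem is classical and cites \cite{KS} (the Kuksin--Shirikyan monograph). Your outline---pathwise well-posedness via stochastic convolution plus a random PDE remainder, Feller via continuous dependence on initial data, existence of the invariant measure via Krylov--Bogoliubov with energy/enstrophy bounds, and uniqueness via strong Feller/irreducibility under Assumption~\ref{a:Highs} or asymptotic strong Feller under Assumption~\ref{ass:CR}---is exactly the standard route and is what the cited references carry out.
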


One then defines the Lagrangian flow map $\phi^t_{u,\omega} : \T^d \to \T^d$ as the solution map to the ODE
\begin{equation}\label{eq:flow-eq}
\partial_t\phi^t_{u,\omega} = u_t\circ\phi^t_{u,\omega}, \quad \phi^0_{u,\omega} = \Id. 
\end{equation}

\subsection{Checking the conditions for stochastic Navier-Stokes} \label{sec:Checking}

In this section we sketch some ideas required to verify the conditions of Theorem \ref{thm:AM} in the setting of stochastic Navier-Stokes. In the setting of stochastic flows, checking the conditions follow along similar lines and typically much easier to verify (see e.g \cite{Gess2021-wh}). 

Assumption \ref{ass:Lyapunov-Flow-Assumption} will be show below in Section \ref{sec:Assumption1-verif}.
Assumption \ref{ass:2pt} was proved for Assumption \ref{a:Highs} in \cite{bedrossian2022almost} and for Assumption \ref{ass:CR} in \cite{cooperman2024exponential}. 
Assumption \ref{ass:SpecGap} (i) -- (iii) regarding the construction of $\psi_p$ was proved directly in \cite{bedrossian2022almost} for the case of Assumption \ref{a:Highs}.
In \ref{sec:psi_p_construction} below, we briefly explain how to obtain Assumption \ref{ass:SpecGap} (i) -- (iii) for the degenerate noise Assumption \ref{ass:CR}. 

\subsubsection{Checking Assumption \ref{ass:Lyapunov-Flow-Assumption}}\label{sec:Assumption1-verif}

We must check Assumption \ref{ass:Lyapunov-Flow-Assumption} for the Lyapunov function $V_{\beta,\eta}$ given by
\begin{align*}
  V_{\beta,\eta}(u) = \langle\norm{u}_{\Hbf}\rangle^{2\beta} e^{\eta \eta_*\norm{u}_{\Wbf}^2},
\end{align*}
where $\eta_*$ is a 
This will be proved by making use of the following super Lypaunov bound proved in \cite{bedrossian2022almost} (the proof works for both Assumption \ref{a:Highs} and \ref{ass:CR}).
\begin{lemma}[ \cite{bedrossian2022almost} Lemma 3.7] \label{lem:TwistBd} Let $(u_t)$ be a solution to \ref{eq:SNS}. There exists a $\gamma_* >0$, such that for all $0\leq \gamma < \gamma_*$, $T>0$, $r\in (0,3)$, $\kappa \geq 0$, and $V(u) = V_{\beta,\eta}$ where $\beta \geq 1$ and $0 < e^{\gamma T}\eta < 1$, there exists a constant $C = C(\gamma,T,r,\kappa,\beta,\eta) >0$ such that the following estimate holds
  \begin{equation}\label{eq:Twistbd}
  \EE_u \exp\left(\kappa \int_0^T\norm{u_s}_{\Hbf^r}\ds\right)\sup_{0\leq t\leq T}V^{e^{\gamma t}}(u_t) \leq C V(u).
  \end{equation}
\end{lemma}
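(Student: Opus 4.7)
The plan is to apply It\^o's formula to $V^{e^{\gamma t}}(u_t)$ and absorb the resulting drift into the dissipation coming from the viscous term, treating the polynomial and exponential factors of $V$ separately and then combining them by Cauchy--Schwarz.

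First, I would focus on the exponential factor $E_t := e^{\eta \eta_* \|u_t\|_{\Wbf}^2}$. In dimension $d=2$, the vorticity equation is $d\omega_t = (\nu \Delta \omega_t - u_t\cdot\grad\omega_t)\,dt + \curl(Q\,dW_t)$, and the transport term vanishes in $L^2$ pairing with $\omega_t$. Thus It\^o's formula yields
\begin{align*}
d\|\omega_t\|_{L^2}^2 = -2\nu \|\grad \omega_t\|_{L^2}^2\,dt + \|\curl Q\|_{HS}^2\,dt + 2\langle \omega_t,\curl Q\,dW_t\rangle.
\end{align*}
A second application of It\^o gives $dE_t$ with drift $\eta\eta_*(-2\nu\|\grad\omega_t\|^2 + \|\curl Q\|_{HS}^2 + 2\eta\eta_*\|Q^\top\omega_t\|^2)E_t$. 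Choosing $\eta_*$ small relative to $\nu$ and using Poincar\'e ($\|\grad\omega\|^2\gtrsim \|\omega\|^2$), the quadratic variation correction and the $\kappa\|u_t\|_{\Hbf^r}$ term (interpolated against $\|\grad\omega\|^2$, possible since $r<3$ translates to $H^{r-1}$ vorticity control) are both absorbed by the negative dissipation, leaving a strictly negative drift of size $-c\nu\|\grad\omega_t\|^2 E_t$ modulo a bounded constant. In 3D the same scheme works by using hyperviscosity, which gives an extra derivative. The $d=3$ case just uses $\|u\|_{\Lbf^2}$ and the hyperviscous $\|\Delta u\|_{\Lbf^2}^2$ dissipation in place of the vorticity.

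Next I would handle the time-dependent exponent $e^{\gamma t}$. Writing $F_t := V^{e^{\gamma t}}(u_t) = V(u_t)^{e^{\gamma t}}$, one has
\begin{align*}
dF_t = e^{\gamma t}F_t\frac{dV(u_t)}{V(u_t)} + \gamma e^{\gamma t}\log V(u_t)\,F_t\,dt + \text{It\^o corrections}.
\end{align*}
The new term $\gamma e^{\gamma t}\log V\,F_t$ grows at most linearly in $\|\omega_t\|^2$ (from the logarithm of the exponential factor) and logarithmically in $\|u_t\|_{\Hbf}$ (from the polynomial factor). Provided $\gamma e^{\gamma T}$ is small enough so that $\gamma e^{\gamma T}\eta\eta_*<1$, this term is strictly dominated by the remaining dissipative term $-c\nu e^{\gamma t}\|\grad\omega_t\|^2 F_t$. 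One obtains an inequality of the form
\begin{align*}
dF_t \leq C_*\,F_t\,dt - c\nu e^{\gamma t}\|\grad\omega_t\|^2 F_t\,dt + F_t\,dM_t,
\end{align*}
where $M_t$ is a martingale whose quadratic variation is itself controlled by the dissipation. The polynomial prefactor $\langle\|u_t\|_{\Hbf}\rangle^{2\beta}$ is handled in parallel by standard higher-order energy estimates for 2D NSE (bootstrapping the $\Hbf$-norm via iterated Gronwall and BDG), which produce analogous controllable drifts; combining multiplicatively uses the product rule and absorbs cross terms into the dissipation by Young's inequality, using smallness of $\eta$.

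Finally, to pass from the drift inequality to the sup-in-time bound, I would form the exponential supermartingale $\exp\bigl(\log F_t + \kappa\int_0^t\|u_s\|_{\Hbf^r}\,ds - (\text{running compensator})\bigr)$, arranging the compensator so that the $\kappa$-integral and the constant $C_*$ are absorbed into the dissipation (which is where the hypothesis $r<3$ is really used) and the quadratic variation of $M_t$ is also neutralized. Doob's submartingale inequality applied to this nonnegative supermartingale controls the supremum by its value at $t=0$, which is exactly $V(u)$ times a constant. The main obstacle, and the reason for the two-parameter family $V_{\beta,\eta}$, is the simultaneous control of the exponential twist $e^{\gamma t}$, the exponential factor $e^{\eta\eta_*\|u\|_{\Wbf}^2}$, and the extra $\kappa$-integral against a single dissipation budget; smallness is quantified by the restriction $e^{\gamma T}\eta<1$ and $\gamma<\gamma_*$, and the stated constant $C(\gamma,T,r,\kappa,\beta,\eta)$ tracks exactly these trade-offs.
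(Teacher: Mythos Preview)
The paper does not prove this lemma at all: it is quoted verbatim as Lemma~3.7 of \cite{bedrossian2022almost} and invoked as a black box (``the following super Lyapunov bound proved in \cite{bedrossian2022almost}''). So there is no in-paper proof to compare against.

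That said, your sketch is broadly the right strategy and is in the spirit of how such twisted Lyapunov bounds are obtained in that reference: It\^o on $\|u_t\|_{\Wbf}^2$, absorb Brownian quadratic variation and the extra $\gamma e^{\gamma t}\log V$ drift into the viscous dissipation using smallness of $\eta e^{\gamma T}$, interpolate $\|u\|_{\Hbf^r}$ with $r<3$ against the parabolic smoothing, and close with an exponential-martingale/Doob argument. Two places deserve more care than you give them. First, the polynomial factor $\langle\|u\|_{\Hbf}\rangle^{2\beta}$ involves a high Sobolev norm ($\sigma>d/2+3$), and its It\^o drift contains nonlinear terms that are not obviously dominated by the $\Wbf$-level dissipation you have set up; in the actual argument this is handled by a separate parabolic energy estimate at the $\Hbf$ level and then combined with the exponential factor via Cauchy--Schwarz, rather than ``absorbed by Young's inequality'' into the same dissipation budget. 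Second, your final step conflates Doob's submartingale maximal inequality with a supermartingale bound; what one actually uses is an exponential-supermartingale estimate (or equivalently the BDG-plus-Gronwall route) to control $\EE\sup_t(\cdot)$, not Doob applied directly. Neither point is fatal to the outline, but both would need to be made precise to constitute a proof.
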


Assumption \ref{ass:Lyapunov-Flow-Assumption} reduces to proving the following:
\begin{lemma}
Let $(u_t)$ be a solution to \eqref{eq:SNS}. For all $b \geq 0$, $\eta \in (0,1/a_*)$, $\beta \geq 2$ and $k \leq k_0$, the following estimate holds,
\[
\E_{u,x}\left[\left(|D^k\phi^1_{\omega,u}| + |D^k(\phi^1_{\omega,u})^{-1})|\right)^b V_{\beta,\eta}(u_1)\right]^{a_*} \leqc_{b,k,a,\beta,\eta} V_{\beta + b(k-1),\eta}(u).
\]
\end{lemma}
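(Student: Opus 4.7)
The plan is to obtain deterministic pathwise bounds on $\|D^k\phi^1_{\omega,u}\|_\infty$ and $\|D^k(\phi^1_{\omega,u})^{-1}\|_\infty$ in terms of the noise realization $(u_s)_{s \in [0,1]}$, and then absorb the resulting random prefactors into $V_{\beta+b(k-1),\eta}(u)$ using the super Lyapunov estimate of Lemma \ref{lem:TwistBd}.

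\medskip

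\emph{Step 1 (pathwise derivative estimates).} Differentiating the flow equation $\partial_t\phi^t_{\omega,u} = u_t(\phi^t_{\omega,u})$ once gives $\partial_t D\phi^t = Du_t(\phi^t)\cdot D\phi^t$, so Gronwall yields
\[
\|D\phi^1_{\omega,u}\|_\infty \leq \exp\Big(\int_0^1 \|\nabla u_s\|_\infty\,\ds\Big).
\]
For $k \geq 2$, applying Faà di Bruno to $D^k[u_t\circ\phi^t]$ produces a sum of terms of the form $D^j u_t(\phi^t)\cdot\prod_{i=1}^{j}D^{k_i}\phi^t$ with $j\leq k$ and $\sum k_i = k$, plus the term $Du_t(\phi^t)\cdot D^k\phi^t$. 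Gronwall in the $Du_t$ variable, coupled with induction on $k$, then gives
\[
\|D^k\phi^1_{\omega,u}\|_\infty \lesssim_k \Big(1+\sup_{s\in[0,1]}\|u_s\|_{C^k}\Big)^{k-1}\exp\Big(C_k\int_0^1\|\nabla u_s\|_\infty\,\ds\Big).
\]
The inverse flow satisfies the backward equation obtained by running $-u_{1-s}$, so the same induction yields the analogous bound for $\|D^k(\phi^1_{\omega,u})^{-1}\|_\infty$.

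\medskip

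\emph{Step 2 (Sobolev embedding).} Since $\sigma > d/2 + 3 \geq d/2 + k_0$, the embedding $\Hbf \hookrightarrow C^{k_0}$ gives $\|u_s\|_{C^k}\lesssim \|u_s\|_\Hbf$ for all $k\leq k_0$. Also pick $r\in(d/2+1,\min(\sigma,3))$ so that $\|\nabla u_s\|_\infty\lesssim \|u_s\|_{\Hbf^r}$. Combining with Step 1,
\[
\big(|D^k\phi^1_{\omega,u}|+|D^k(\phi^1_{\omega,u})^{-1}|\big)^b \lesssim_{b,k}\Big(\sup_{s\in[0,1]}\langle\|u_s\|_\Hbf\rangle\Big)^{b(k-1)}\exp\Big(C_k b\int_0^1\|u_s\|_{\Hbf^r}\,\ds\Big).
\]

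\medskip

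\emph{Step 3 (combining with $V_{\beta,\eta}$ and the super Lyapunov bound).} Multiplying the above by $V_{\beta,\eta}(u_1)$, raising to the power $a_*$, and using $V_{\beta,\eta}(u_1)\leq \sup_{s\in[0,1]}V_{\beta,\eta}(u_s)$, the polynomial prefactor is absorbed via
\[
\Big(\sup_s\langle\|u_s\|_\Hbf\rangle\Big)^{a_*b(k-1)}\sup_s V_{\beta,\eta}(u_s)^{a_*} \leq \sup_{s\in[0,1]}V_{\beta+b(k-1),\eta}(u_s)^{a_*},
\]
using $V_{\beta,\eta}^{a_*}=V_{\beta a_*,\eta a_*}$ and $\eta\in(0,1/a_*)$. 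Thus
\[
\Big[\big(|D^k\phi^1|+|D^k(\phi^1)^{-1}|\big)^{b}V_{\beta,\eta}(u_1)\Big]^{a_*}\lesssim\exp\Big(C_k b a_*\int_0^1\|u_s\|_{\Hbf^r}\,\ds\Big)\sup_{s\in[0,1]}V_{\beta+b(k-1),\eta}(u_s)^{a_*}.
\]
Taking expectation and applying Lemma \ref{lem:TwistBd} with $T=1$, $\kappa=C_k b a_*$, $r<3$, $\gamma=\log a_*<\gamma_*$ (requiring $a_*<e^{\gamma_*}$, which fixes $a_*$), and $V$ replaced by $V_{\beta+b(k-1),\eta}$ yields the desired estimate.

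\medskip

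\emph{Main obstacle.} The main obstacle is running the induction in Step 1 cleanly: the Faà di Bruno expansion produces many cross terms, and one must verify by induction that the polynomial growth in $\sup_s\|u_s\|_\Hbf$ stays at order $k-1$ rather than accumulating. Once that bookkeeping is done, Steps 2--3 are routine, with the final application of Lemma \ref{lem:TwistBd} handling the exponential factor in $\int_0^1\|u_s\|_{\Hbf^r}\ds$ and the sup of $V_{\beta+b(k-1),\eta}$ simultaneously thanks to the super Lyapunov structure.
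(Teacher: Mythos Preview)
Your overall strategy is exactly the paper's: Fa\`a di Bruno plus induction for the pathwise $C^k$ bound on $\phi^1$ and its inverse, followed by an appeal to the super-Lyapunov estimate (Lemma~\ref{lem:TwistBd}). Your treatment of the inverse via the backward flow $-u_{1-s}$ is a clean shortcut; the paper instead applies Fa\`a di Bruno directly to $D^k(\phi^1)^{-1}$ with a separate combinatorial set $\mathcal B_k$, but both routes yield the same bound.

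There is, however, a genuine gap in Step~3. The displayed inequality
\[
\Big(\sup_s\langle\|u_s\|_\Hbf\rangle\Big)^{a_*b(k-1)}\sup_s V_{\beta,\eta}(u_s)^{a_*}\ \leq\ \sup_{s}V_{\beta+b(k-1),\eta}(u_s)^{a_*}
\]
is false in general: the two suprema on the left can be achieved at different times, and since $\|\cdot\|_\Wbf$ is strictly weaker than $\|\cdot\|_\Hbf$ one can have $\langle\|u_{s_1}\|_\Hbf\rangle$ large with $\|u_{s_1}\|_\Wbf$ moderate (high-frequency concentration) while $\|u_{s_2}\|_\Wbf$ is large with $\langle\|u_{s_2}\|_\Hbf\rangle$ only comparable. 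In that regime the product on the left exceeds any single-time value on the right. A second, related issue is your invocation of Lemma~\ref{lem:TwistBd} with $\gamma=\log a_*$: that lemma controls $\sup_{t}V^{e^{\gamma t}}(u_t)$ with the \emph{time-dependent} exponent $e^{\gamma t}\in[1,a_*]$, not $\sup_t V(u_t)^{a_*}$, and these differ unless the supremum of $V(u_t)$ happens to occur at $t=1$.

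Both issues are repairable---for instance by applying H\"older on the expectation to separate the polynomial factor $\sup_s\langle\|u_s\|_\Hbf\rangle^{a_*b(k-1)}$ from $V_{\beta,\eta}(u_1)^{a_*}$ and then invoking Lemma~\ref{lem:TwistBd} (with $\gamma=0$) on each piece---though this may yield a $\beta(k,b)$ somewhat larger than the clean $\beta+b(k-1)$ you state. To be fair, the paper's own proof is equally terse at this final step, simply asserting that the super-Lyapunov bound applies without working out the bookkeeping.
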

\begin{proof}

To apply Lemma \ref{lem:TwistBd} we must first bound derivatives of $\phi$ in terms of something that can be controlled
\begin{claim} For each $k\in \N$, there exists a $c_k$ satisfying $c_k = 1$ such that the following estimate holds
\[
\sup_{s\in[0,t]}|D^k\phi^s_{\omega,u}| \leqc_k \sup_{s\in[0,1]}\langle\|u_s\|_{C^{k}}\rangle^{k-1}\exp\left(c_k\int_0^1\|Du_s\|_{L^\infty}\dee s\right)
\]
\end{claim}
\begin{proof}
The proof will be by induction on $k$. Clearly it is true for $k = 1$. We assume it is true for all $j\leq (k-1)$. Using the equation \eqref{eq:flow-eq} we can estimate $|D^k\phi^t_{\omega,u}|$ using the Fa\`{a} di Bruno formula 
\[
\partial_t|D^k\phi^t_{\omega,u}|\leqc_{k} \sum_{n = 1}^k|D^{n}u_s\circ\phi^t_{\omega,u}| \sum_{C_{n,k}} \prod_{j=1}^k|D^j\phi^t_{\omega,u}|^{m_j},
\]
where the sum is over the set
\[
\mathcal{C}_{n,k} = \{(m_1,\ldots,m_k) \in \Z^k_{\geq 0}\,:\, 1\cdot m_1 +2\cdot m_2 +\ldots +k\cdot m_k = k, \quad m_1+\ldots +m_k = n\}.
\]
Separating out the leading order term in the outer sum $n=1$, $|Du\circ\phi^t_{\omega,u}||D^k\phi^t_{\omega,u}|$, and applying Gr\"onwall to the remaining terms, we obtain the following estimate
\[
  \sup_{s\in[0,1]}|D^k\phi^s_{\omega,u}|\leqc_{k} \sup_{s\in[0,1]}\left(\sum_{n = 2}^{k}\|D^{n}u_s\|_{L^\infty}\sum_{C_{n,k}} \prod_{j=1}^{k-1}|D^j\phi^s_{\omega,u}|^{m_j}\right) \exp\left(\int_0^1\|Du_s\|_{L^\infty}\dee s\right),
\]
where we used that in $m_k = 0$ if $n\geq 2$ and hence the product is only up to $k-1$. Using the induction hypothesis we have for $n\geq 2$ and $(m_1,\ldots, m_k) \in C_{n,k}$ that
\[
\begin{aligned}
  \prod_{j=1}^{k-1}|D^j\phi^t_{\omega,u}|^{m_j} &\leqc_{k} \sup_{s\in[0,1]}\langle\|u_s\|_{C^{k-1}}\rangle^{\sum_{j=2}^{k-1}(j-1)m_j}\exp\left(\sum_{j=2}^{k-1}c_jm_j\int_0^1\|Du_s\|_{L^\infty}\dee s\right),\\
  &\leqc_{k} \sup_{s\in[0,1]}\langle\|u_s\|_{C^{k-1}}\rangle^{k-2}\exp\left((c_k -1)\int_0^1\|Du_s\|_{L^\infty}\dee s\right),
\end{aligned}
\]
where in the second inequality we used that $\sum_{j=2}^{k-1}(j-1)m_j = k-n \leq k-2$ and define $c_k := 1+ \sup_n \sup_{(m_1,\ldots,m_k)\in C_{n,k}}\sum_{j=2}^{k-1}c_jm_j$. Substituting this back into the expression for $|D^k\phi^t_{\omega,u}|$ we obtain the desired estimate.
\end{proof}

Likewise we obtain a similar estimate for the inverse:
\begin{claim} For each $k\in \N$, there exists a $\tilde{c}_k>0$ satisfying $\tilde{c}_k = 1$ such that the following estimate holds
\[
\sup_{s\in[0,t]}|D^k(\phi^s_{\omega,u})^{-1}| \leqc_k \sup_{s\in[0,1]}\langle\|u_s\|_{C^{k}}\rangle^{k-1}\exp\left(\tilde{c}_k\int_0^1\|Du_s\|_{L^\infty}\dee s\right)
\]
\end{claim}
\begin{proof}
To prove this, we will make use of the following estimate on derivatives of the inverse of a diffeomorphism which can be deduced from the Fa\`{a} di Bruno formula,  %(see e.g. \cite{}).
\[
|D^k(\phi^1_{\omega,u})^{-1}| \leqc_k \sum_{\mathcal{B}_{k}}\prod_{j=1}^k\|D^j\phi^1_{\omega,u}\|^{m_j}_{L^\infty},
\]
where the sum is over the set
\[
\mathcal{B}_k = \{(m_1,\ldots,m_k)\in \Z^k_{\geq 0}\,:\, 1\cdot m_1 + 2\cdot m_2 + \ldots + k\cdot m_k = 2k-2, \quad m_1 + \ldots + m_k = k-1\}.
\]
Substituting in the estimate for $|D^k\phi^1_{\omega,u}|$ we obtain
\[
  |D^k(\phi^1_{\omega,u})^{-1}| \leqc_k \left(\sum_{\mathcal{B}_{k}} \sup_{s\in[0,1]}\langle\|u_s\|_{C^{k}}\rangle^{\sum_{j=1}^k (j-1)m_j}\right)\exp\left(\tilde{c}_k\int_0^1\|Du_s\|_{L^\infty}\dee s\right),
\]
for some suitable $\tilde{c}_k > 0$. Using that $\sum_{j=1}^k(j-1)m_j = 2k-2 - (k-1) = k-1$, we obtain the desired estimate.

\end{proof}

Using these estimates we can now prove the desired result by applying the super-Lyapunov bound \eqref{eq:Twistbd} to the above estimates assuming that the regularity of $u$ is sufficiently high $\sigma \geq k_0$. 

\end{proof}

\subsubsection{Construction of $\psi_p$ with degenerate noise} \label{sec:psi_p_construction}

Likely the simplest way to construct $\psi_p$ is through a spectral perturbation method applied to $\hat{P}^p$, which is the method employed in e.g. \cite{bedrossian2021almost,bedrossian2022almost,blumenthal2023exponential}.
The first step is to prove that $\mathbf{1}$ is the unique, dominant eigenvector for $\hat{P}$ in $C^1_V$ (the corresponding eigenvalue is of course $1$), which amounts to verifying a spectral gap for $\hat{P}^\ast$ in the dual Lipschitz metric (i.e. the Wasserstein-1 norm). This is typically done with a weak Harris' theorem; the proof of this geometric ergodicity found in \cite{bedrossian2022almost}, which closely follows \cite{HM06}, can be used for both Assumption \ref{a:Highs} and Assumption \ref{ass:CR}.
Then the observation that $\lim_{p \to 0} \hat{P}^p = \hat{P}$ in the strong operator topology implies the existence of a $\psi_p$ satisfying Assumption \ref{ass:SpecGap} (i) and (iii) through classical spectral perturbation theory. 
Verifying (ii) is a standard convexity `trick' which proves that  $\Lambda(p) = \lambda p + o(p) $ as $p \to 0$, where $\lambda$ is the Lyapunov exponent. See \cite{bedrossian2022almost,blumenthal2023exponential} for expositions of this argument.

\subsection{Passive scalars}
As a final application of Theorem \ref{thm:AM} we address specifically the advection-diffusion equation on $\T^d$
\begin{align}
  \partial_t f_t + u_t \cdot \nabla f_t = \kappa \Delta f_t, \quad f|_{t=0} = f_0.   \label{eq:PassScalIntroAD}
\end{align}
The transfer operator for this equation can be written as follows
\begin{align}
f_t = \EE_{\tilde{W}} f_0 \circ (\phi^t_{\omega,u_0})^{-1}, \label{eq:ftAdvection}
\end{align}
where $\phi^t_{\omega,u_0}$ solves the SDE
\begin{align*}
\dee \phi^t_{\omega,u_0}  = u_t ( \phi^t_{\omega,u_0}  ) + \sqrt{2 \kappa} \dee \tilde{W}_t, \quad \phi^0_{\omega,u_0}(x) = x.
\end{align*}
In \eqref{eq:ftAdvection}, the notation $\EE_{\tilde{W}}$ refers to the expectation with respect only to the Brownian motions $\tilde{W}_t$ (which are of course independent from $(u_t)$).
For the case of Pierrehumbert \cite{blumenthal2023exponential} and stochastic Navier-Stokes under Assumption \ref{a:Highs} \cite{bedrossian2021almost}.
it was proved in those references that Assumptions \ref{ass:Lyapunov-Flow-Assumption}--\ref{ass:2pt} all hold \emph{uniformly} in $\kappa \in [0,\kappa_0]$ for some small $\kappa_0$ (note that this is far from obvious).
As the proof of Theorem \ref{thm:AM} is quantitative as well, in these cases, Theorem \ref{thm:AM} also holds uniformly in $\kappa \in [0,\kappa_0]$ for solutions to \eqref{eq:PassScalIntroAD}.
It seems plausible that one could carry this out also for Assumption \ref{ass:CR} however as of writing, this result has not yet appeared in the literature. 

There is one last notable consequence of Theorem \ref{thm:AM} which pertains to the limiting system arising in Batchelor-regime passive scalar turbulence \cite{bedrossian2022batchelor}, specifically the system
\begin{subequations}
\begin{align}
& \dee u_t + (u_t \cdot \grad u_t + \grad p_t - \nu \Delta u_t)\dee t = Q \dee W_t \label{eq:NSEPS} \\
& \Div u_t = 0   \\ 
& \dee f_t + (u_t \cdot \nabla f_t - \kappa \Delta f_t) \dee t = b \dee \xi_t. \label{eq:ADPS} \\
\end{align}
\end{subequations}
For all $\kappa > 0$, $\exists !$ stationary measure for the joint $(u_t,f_t)$ process, which we denote $\mu^{\kappa}$. 
In \cite{bedrossian2022batchelor} it was proved that any sequence $\set{\mu^{\kappa_n}}_{n=0}^\infty$ such that $\kappa_n \to 0$ has a subsequence which converges weak-$\ast$ to a measure supported on $\Hbf \times H^{-\delta}$ for all $\delta > 0$ which is a stationary measure for the $\kappa = 0$ system. 
One can use Theorem \ref{thm:AM} to prove that in fact, there is a unique stationary measure for the $\kappa=0$ system supported on $\Hbf \times H^{-\delta}$ (and hence a unique limit for  all such convergent subsequences as $\kappa \to 0$).
\begin{theorem}
Under Assumption \ref{a:Highs} or Assumption \ref{ass:CR}, there exists a unique stationary measure to the $\kappa = 0$ system \eqref{eq:NSEPS}--\eqref{eq:ADPS} supported on $\Hbf \times H^{-\delta}$. % for all $\delta > 0$. 
\end{theorem}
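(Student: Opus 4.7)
The plan is a coupling argument that exploits the $H^{-\delta}$ contraction from Theorem \ref{thm:AM} applied to the difference of two stationary trajectories driven by the same velocity field and the same scalar noise. Suppose $\pi_1, \pi_2$ are two stationary measures on $\Hbf \times H^{-\delta}$ for \eqref{eq:NSEPS}--\eqref{eq:ADPS} at $\kappa = 0$. Because the $u$-dynamics is autonomous, the $\Hbf$-marginal of each $\pi_i$ is stationary for the SNS semigroup and thus equals $\mu$ by Proposition \ref{prop:WP}. Assuming (as is standard in the Batchelor setup) that $\int b \, \dee x = 0$, the mean $\int f_t \, \dee x$ is conserved by \eqref{eq:ADPS}, so each $\pi_i$ is supported on a single level set of the mean; since the dynamics commutes with additive constants in $f$, it suffices to treat mean-zero stationary measures.

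Using the disintegration $\pi_i(\dee u,\dee f) = \mu(\dee u)\,\pi_i^u(\dee f)$, I build on a common probability space initial data $u_0 \sim \mu$ and, conditionally on $u_0$, independent draws $f_0 \sim \pi_1^{u_0}$ and $f_0' \sim \pi_2^{u_0}$. Driving the SNS equation and both scalar equations with the \emph{same} realizations of $W$ and $\xi$ produces processes $(u_t, f_t)$ and $(u_t, f_t')$ sharing the same velocity field. Their difference $g_t := f_t - f_t'$ is noiseless, mean zero, and satisfies the pure transport equation $\partial_t g_t + u_t \cdot \grad g_t = 0$, so $g_t = g_0 \circ (\phi^t_{\omega,u_0})^{-1}$. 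Theorem \ref{thm:AM} then applies directly to $g_t$ with $g_0 = f_0 - f_0' \in H^{-\delta}$.

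For any bounded Lipschitz test function $\Phi$ on $\Hbf \times H^{-\delta}$, stationarity of both measures yields
\begin{equation*}
\left|\int \Phi \, \dee\pi_1 - \int \Phi \,\dee\pi_2\right|
= \big|\EE\, \Phi(u_t,f_t) - \EE\, \Phi(u_t,f_t')\big|
\leq \|\Phi\|_{\mathrm{Lip}}\, \EE\,\|g_t\|_{H^{-\delta}}.
\end{equation*}
To close the estimate despite the $V(u_0)$ factor in \eqref{ineq:annealedNoV}, I introduce the cutoff
\begin{equation*}
A_R := \{V(u_0) \leq R,\ \|f_0\|_{H^{-\delta}} \leq R,\ \|f_0'\|_{H^{-\delta}} \leq R\}.
\end{equation*}
Conditioning on $(u_0,f_0,f_0')$ and applying \eqref{ineq:annealedNoV} with $q = 1$ gives $\EE[\mathbf{1}_{A_R}\|g_t\|_{H^{-\delta}}] \lesssim R^2 e^{-\mu t}$, while on $A_R^c$ the contribution is bounded by $2\|\Phi\|_\infty\, \P(A_R^c)$. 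Tightness of the marginal distributions of $V(u_0)$ under $\mu$ and of $\|f\|_{H^{-\delta}}$ under $\pi_1, \pi_2$ forces $\P(A_R^c) \to 0$ as $R \to \infty$. Letting $t \to \infty$ at fixed $R$ and then $R \to \infty$ yields $\int \Phi \,\dee\pi_1 = \int \Phi \,\dee\pi_2$ for every bounded Lipschitz $\Phi$, hence $\pi_1 = \pi_2$.

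The principal obstacle is precisely that $V(u_0)\|f_0\|_{H^{-\delta}}^2$ need not be integrable against a generic stationary measure supported on $\Hbf \times H^{-\delta}$, so Theorem \ref{thm:AM} cannot be applied globally to the coupled process in one shot. The two-scale truncation handles this by absorbing the unlikely tail event $A_R^c$ into the uniform bound of $\Phi$ via tightness, while inside $A_R$ the factor $V(u_0)\|g_0\|_{H^{-\delta}}$ is bounded by a function of $R$ alone, so Theorem \ref{thm:AM} delivers exponential decay at a rate independent of $R$. The only auxiliary point is the mean-reduction for the scalar, which follows from conservation of $\int f_t\,\dee x$ and shift invariance of the dynamics in $f$.
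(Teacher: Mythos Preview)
Your argument is correct and reaches the same conclusion, but it differs from the paper's proof in a way worth noting. The paper does not couple random initial data. Instead it reduces to ergodic stationary measures $\mu^1,\mu^2$, invokes Birkhoff's ergodic theorem to obtain full-measure sets $\hat A_i$ on which the Ces\`aro averages $\frac{1}{T}\sum_{t<T}\E\varphi(u_t,f_t)$ converge to $\int\varphi\,\dee\mu^i$, and then uses that both measures project to the unique SNS stationary measure $\mu$ to select \emph{deterministic} initial data $(u_0,f_0^1)\in\hat A_1$ and $(u_0,f_0^2)\in\hat A_2$ sharing the same $u_0$. Because these are fixed points, $V(u_0)$ and $\|f_0^i\|_{H^{-\delta}}$ are just constants, so Theorem~\ref{thm:AM} applies to the difference $\tilde f_t=f_t^1-f_t^2$ without any truncation, and the Ces\`aro average of $\E\|\tilde f_t\|_{H^{-\delta}}$ tends to zero.

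Your coupling-via-disintegration route is more direct in spirit (no ergodic decomposition, no Birkhoff) but pays for this with the two-scale truncation on $A_R$, which you handle cleanly. The paper's route trades the truncation for the ergodic-theoretic selection of good initial conditions. Both hinge on the same mechanism: the difference of two scalar trajectories driven by the same velocity solves the noiseless transport equation, to which Theorem~\ref{thm:AM} applies. Incidentally, your remark about restricting to mean-zero scalars (needed so that $g_0$ has mean zero before invoking Theorem~\ref{thm:AM}) is a point the paper leaves implicit.
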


\begin{proof}
The proof is based on Birkhoff's ergodic theorem and the contraction in $H^{-\delta}$. Suppose that there exist two measure $\mu^1$ and $\mu^2$ for the process $(u_t,f_t)$ on $\Hbf \times H^{-\delta}$ which are stationary for the $\kappa = 0$ system. Using ergodic decomposition we may assume that they are ergodic. Hence, by Birkhoff's ergodic theorem, there exist two sets $\hat{A}_1, \hat{A}_2$ such that for $i,j \in\{1,2\}$, $\mu^i(A_j) = \delta_{ij}$ and for every bounded Lipschitz $\varphi$ on $\Hbf \times H^{-\delta}$, and every initial data $(u_0,f_0) \in \hat{A}_i$ we have
\[
\lim_{T\to \infty}\frac{1}{T}\sum_{t=0}^{T-1} \E\varphi(u_t,f_t) \to \int \varphi \dee \mu^i
\]
Moreover since $\mu^1, \mu^2$ both project down to a unique stationary measure $\mu$ for the Navier-Stokes system on $\Hbf$, it must be that the projections $A_1,A_2 \subset \Hbf$ of $\hat{A}_1,\hat{A}_2$ are both full $\mu$ measure and hence have a non-empty intersection $A_1\cap A_2$ which is also full $\mu$ measure. It follows that we may choose initial data $(u_0,f^1_0)\in \hat{A}_1$ and $(u_0,f^2_0)\in \hat{A}_2$, such that $u_0\in A_1\cap A_2$ and we can write
\begin{equation}\label{eq:Diff-mu}
\left|\int \varphi \dee \mu^1 - \int \varphi \dee \mu^2\right| \leq \lim_{T\to \infty} \frac{1}{T}\sum_{t=0}^{T-1}\E|\varphi(u_t,f^1_t) - \varphi(u_t,f^2_t)|\leqc \lim_{T\to \infty} \frac{1}{T}\sum_{t=0}^{T-1}\E\|f^1_t - f^2_t\|_{H^{-\delta}}.
\end{equation}
Note that since the velocity $u_t$ is the same for $f^1_t$ $f^2_t$, the difference $\tilde{f}_t = f^1_t - f^2_t$ solves the advection-diffusion equation
\[
\partial_t \tilde{f}_t + u_t \cdot \nabla \tilde{f}_t = 0,
\]
and hence by Theorem \ref{thm:AM} the right-hand side of \eqref{eq:Diff-mu} goes to zero as $T\to \infty$, implying that $\mu^1 = \mu^2$.
\end{proof}

%
%\section{Outline} 
%\input{Outline} 

\section{Constructing the Symbol}

\label{sec:symbol}
%!TEX root = main.tex

In this section, we use the function $\psi_p$ from Assumption \ref{ass:SpecGap} to construct a pseudo-differential operator $a_{p,\eps} \in S^{-p}_{1-\eps}$ which exhibits exponential decay under the map $T^*P$, up to a lower order remainder. This involves a certain regularization scheme of the finite regularity symbol $a_p (u,x,\xi)= \psi_p(u,x,\xi/|\xi|)|\xi|^{-p}$, where we mollify $\psi_p$ at the scale $h>0$, where $h \sim |\xi|^{-\eps}$ as $|\xi|\to \infty$. This scheme is reminiscent of para-differential calculus \cite{metivier2008differential}, although it differs in the details.

\subsection{Lower bound on $\psi_p$}

First we show that assumption that $\psi_p$ is lower bounded on bounded sets actually implies a $1/V(u)$ lowerbound on $\psi_p$ for all $u \in \mathbf H$.

\begin{lemma}
For all Lyapunov functions $V(u) = V_{\beta,\eta}(u)$ with $\beta \geq  1$ and $\eta \in (0,1/a_*)$ and $\forall \abs{p} \ll 1$, $\psi_p$ satisfies the following lower bound 
\[
  V_{\beta,\eta}(u)^{-1} \leqc_\eta |\psi_p(u,x,v)|. 
\]
\end{lemma}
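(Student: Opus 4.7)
The plan is to iterate the eigenfunction identity from Assumption \ref{ass:SpecGap}(iii) enough times so that the state $u_n$ lands in a fixed bounded subset of $U$ with probability at least $1/2$, thereby replacing the set-dependent constant $c_B$ from Assumption \ref{ass:SpecGap}(iii) by one that is universal in $u$. Since $V_{\beta,\eta}$ is monotone in both parameters and $V \mapsto 1/V$ is order-reversing, it suffices to prove the bound for any single fixed choice of $(\beta,\eta)$ in the stated range.

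The starting point is the iterated eigenfunction identity
\[
\psi_p(u,x,v) = e^{n\Lambda(p)} \, \E_{u,x,v}\!\left[|\check{A}^n_{\omega,u,x} v|^{-p} \psi_p(u_n, x_n, v_n)\right],
\]
which follows from Assumption \ref{ass:SpecGap}. Restricting the expectation to the good event
\[
G_n := \{u_n \in B_0\}\cap\{\mathcal{Q}_1(\phi^n_{\omega,u}) \leq R\}
\]
for a fixed bounded $B_0 \subset U$ and a threshold $R > 0$ to be chosen, Assumption \ref{ass:SpecGap}(iii) gives $\psi_p(u_n, x_n, v_n) \geq c_{B_0}$, while the estimate $|\check{A}^n v|^{-p} \geq \mathcal{Q}_1(\phi^n)^{-p} \geq R^{-p}$ holds on $G_n$, so
\[
\psi_p(u,x,v) \geq c_{B_0}\, R^{-p}\, e^{n\Lambda(p)} \, \P(G_n).
\]

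The main task is to choose $n$, $B_0$, and $R$ so that $\P(G_n) \geq 1/2$. For $\P(u_n \in B_0)$, I use the super-Lyapunov bound from Remark \ref{eq:super-lyapunov-remark}: iterating $PV \leq \varepsilon V + K_\varepsilon$ gives $\E V(u_n) \leq \varepsilon^n V(u) + K_\varepsilon/(1-\varepsilon)$, so choosing $n := \lceil \log V(u) / \log(1/\varepsilon)\rceil$ produces $\E V(u_n) \leq K' := 1 + K_\varepsilon/(1-\varepsilon)$, a universal constant. Taking $B_0 := \{V \leq 4 K'\}$ (bounded and independent of $u$), Markov's inequality gives $\P(u_n \notin B_0) \leq 1/4$. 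For the cocycle bound, Assumption \ref{ass:Lyapunov-Flow-Assumption} with $k = 1$, $b = p$ gives the one-step estimate $\E[\mathcal{Q}_1(\phi)^{p a_*} V(u_1)^{a_*}] \leq C V(u)$; combining this with the chain rule estimate $\mathcal{Q}_1(\phi^n_{\omega,u}) \lesssim \mathcal{Q}_1(\phi_{\omega,u})\cdot\mathcal{Q}_1(\phi^{n-1}_{\tau(\omega,u),u_1})$ and the Markov property, a straightforward induction on $n$ yields
\[
\E \mathcal{Q}_1(\phi^n_{\omega,u})^{p a_*} \leq C^n V(u).
\]
Setting $R^{p a_*} := 4 C^n V(u)$ then gives $\P(\mathcal{Q}_1(\phi^n) > R) \leq 1/4$, so $\P(G_n) \geq 1/2$.

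Substituting $R^{-p} = (4 C^n V(u))^{-1/a_*}$ and $e^{n\Lambda(p)} \sim V(u)^{\Lambda(p)/\log(1/\varepsilon)}$ (up to constants) back into the lower bound produces
\[
\psi_p(u,x,v) \gtrsim V(u)^{-1/a_* \,+\, [\Lambda(p) - \log(C)/a_*]/\log(1/\varepsilon)}.
\]
Choosing $\varepsilon$ sufficiently small (depending on $C$, $a_*$, and $p$) forces the exponent to be $\geq -1$, which, using $V(u) \geq 1$, yields the desired bound $\psi_p(u,x,v) \gtrsim V(u)^{-1}$. The main obstacle is controlling the $C^n$ growth from the cocycle moment bound in the presence of $n \sim \log V(u)$: this contributes an additional factor $V(u)^{\log(C)/(a_* \log(1/\varepsilon))}$, which the choice of $\varepsilon$ must absorb without destroying the $V(u)^{-1}$ bound.
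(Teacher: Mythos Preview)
Your argument is correct and reaches the same conclusion as the paper, but by a genuinely different route. The paper uses a random stopping time $\tau = \inf\{n : u_n \in B_r\}$, the strong Markov property, and the limit formula $\psi_p = \lim_n e^{\Lambda(p)n}(\hat P^p)^n\bm{1}$; after stopping at $\tau$ it applies Jensen to reduce the lower bound to an upper bound on $\E[e^{\Lambda(p)\tau}|D_x\phi^\tau v|^{p}]$, which is then controlled by combining the exponential tail estimate $\P(\tau>n)\lesssim V(u)e^{-\gamma n}$ with the iterated one-step cocycle bound. By contrast, you work with a \emph{deterministic} time $n \sim \log V(u)/\log(1/\varepsilon)$, invoke the eigenfunction identity for $\psi_p$ directly (rather than the limit formula), and restrict to a good event via two Markov inequalities. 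Your approach is more elementary in that it avoids stopping times and the strong Markov property altogether, and it sidesteps the limit formula by exploiting positivity of $\psi_p$ (which follows immediately from Assumption~\ref{ass:SpecGap}(iii)) to drop the complement of $G_n$. The cost is the bookkeeping of exponents at the end, where you must tune $\varepsilon$ against the growth constant $C$; since $C$ (from Assumption~\ref{ass:Lyapunov-Flow-Assumption} with $k=1$) does not depend on $\varepsilon$, this works. Two small remarks: for $p<0$ you should write $R^{-|p|}$ and take the moment exponent $|p|a_*$ rather than $pa_*$; and your opening claim about monotonicity of $V_{\beta,\eta}$ is unnecessary, since your argument already runs for each fixed $(\beta,\eta)$.
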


\begin{proof}
Let $r>0$ and define $B_r = \{V(u) \leq r\}$ for $V(u) = V_{\beta,\eta}(u)$ with $\beta,\eta$ as above. Now let $\tau = \inf\{n>0\:\, u_n\in B_R\}$. Our first step is to show that for each $\gamma >0$, there exists an $r>0$ big enough so that
\begin{equation}\label{eq:exit-time-est}
  \P(\tau >n) \leqc_\gamma V(u)e^{-\gamma n}.
\end{equation}
Such a bound follows relatively easily by standard techniques in Markov chains, which we outline now for completeness. Indeed, the discrete Dynkin formula implies that
\[
\E e^{\gamma \tau}V(u_\tau) = V(u_0) + \E\sum_{k=1}^\tau e^{\gamma k} \left(PV(u_{k-1}) - e^{-\gamma} V(u_{k-1})\right).
\]
By the super Lyapunov property $PV \leq e^{-\gamma}V + K_\gamma$ mentioned in Remark \ref{eq:super-lyapunov-remark} and the definition of $\tau$ this implies the following exponential estimate on $\tau$
\[
\left(r-\frac{K_\gamma}{e^{\gamma}-1}\right)\E e^{\gamma \tau} \leq V(u_0).
\]
Hence, taking $r>\frac{K_\gamma}{e^{\gamma}-1}$ and using Markov's inequality gives \eqref{eq:exit-time-est}.

Next, we note that by the spectral gap condition (Assumption \ref{ass:SpecGap}) on $\hat{P}^p$ we have that $\psi_p$ is given by
\begin{equation}\label{eq:limit-formula}
\psi_p = \lim_{n\to\infty} e^{\Lambda(p)}\hat{P}^p\bm{1} \quad\text{in}\quad \mathrm{Lip}_V. 
\end{equation}
Indeed, $e^{\Lambda(p)}\hat{P}^p$ has a dominant simple eigenvalue $1$ with a spectral gap. Let $\pi_p$ be the rank one spectral projector associated with $e^{-\Lambda(p)}$ and recall that $\psi_p = \pi_p\bm{1}$.
Then by the spectral gap assumption and Gelfand's formula $\lim_{n\to \infty} e^{\Lambda(p)n}(\hat{P}^p)^n(I-\pi_p)\bm{1} =0$ in $\mathrm{Lip}_V$. It follows that
\[
  \lim_{n\to \infty} e^{\Lambda(p)n}(\hat{P}^p)^n\bm{1} = \lim_{n\to \infty} e^{\Lambda(p)n}(\hat{P}^p)^n\pi_p\bm{1} = \psi_p,\quad \text{in } \mathrm{Lip}_V
\]

The limit formula \eqref{eq:limit-formula} implies that for each $u\in B_r$, we have
\[
  \psi_p(u,x,v) = \lim_{n\to\infty} e^{\Lambda(p)n}\E|D_x\phi^n_{\omega,u} v|^{-p},
\]
which is uniform over $u$ in $B_r$. By Assumption \ref{ass:SpecGap} there exists a $c_r>0$ such that
\[
  \inf_{u\in B_r} |\psi_p(u,x,v)| \geq c_r.
\]
With this in mind, let $N_0 > 0$ be large enough so that
\[
G_n(u,x,v) := e^{\Lambda(p)n} \E |D_x \phi^n_{\omega, u} v |^{-p} \geq c_r
\]
for all $n \geq N_0$ and $u\in B_r$, $(x, v) \in P \T^d$. Now we can write $G_n(u,x,v)$ as
\[
\begin{aligned}\label{eq:G-Markov-identity}
G_n(u,x,v) &= \E e^{\Lambda(p)\tau} |D_x \phi^\tau_{\omega, u} v|^{-p} \cdot e^{\Lambda(p)(n - \tau)} |D_{x_\tau} \phi^{n - \tau}_{\theta^\tau \omega, u_\tau} v_{\tau}|^{-p} \\
& = \E \left[ e^{\Lambda(p)\tau} |D_x \phi^{\tau}_{\omega, u} v|^{-p} \cdot
\E \left( e^{\Lambda(p)(n - {\tau})} |D_{x_{\tau}} \phi^{n- \tau}_{\theta^{\tau} \omega, u_{\tau}} v_{{\tau}}|^{-p} \bigg| \mathcal{F}_{\tau} \right) \right] \,\\
&= \E\left[e^{\Lambda(p)\tau} |D_x \phi^{\tau}_{\omega, u} v|^{-p} \cdot
G_{n-\tau}(u_{\tau},x_\tau, v_\tau)\right]
\end{aligned}
\]
Here, $(\mathcal{F}_n)$ denotes the standard filtration and $\mathcal{F}_{\tau}$ is the corresponding stopped sigma-algebra, consisting of measurable sets $K$ for which $K \cap \{ \tau \leq n \} \in \mathcal{F}_n$ for all $n\geq 0$. In the last line of the above inequality, we used the strong Markov property to conclude
\[
  G_{n-\tau}(u_\tau ,x_\tau,v_\tau) = \E \left( e^{\Lambda(p)(n - {\tau})} |D_{x_{\tau}} \phi^{n - \tau}_{\theta^{\tau} \omega, u_{\tau}} v_{{\tau}}|^{-p} \bigg| \mathcal{F}_{\tau} \right)
\]

We now bound $\psi_p$ for $u\in B^c_r$ as follows. Since for $n \geq N_0$ and $u \in B_r$, $G_n(u,x,v) \geq c_r$. Therefore, by \eqref{eq:G-Markov-identity}, for any $u\in B_r$
\begin{equation}
\begin{aligned}
  G_n(u,x,v) &\geq \E\left[{\bf 1}_{N_0 \leq n-\tau} e^{\Lambda(p)\tau} |D_x \phi^{\tau}_{\omega, u} v|^p \cdot
G_{n-\tau}(u_{\tau},x_\tau, v_\tau)\right]\\
&\geq c_{r}\E\left({\bf 1}_{N_0 \leq n-\tau} e^{-\Lambda(p)\tau}|D_x\phi^\tau_{\omega,u}v|^{-p}\right).
\end{aligned}
\end{equation}
Taking $n \to \infty$ and using that $\tau < \infty$ almost-surely, we conclude by monotone convergence that
\[
\psi_p = \lim_{n\to\infty}G_n(u,x,v) \geq c_r \E (e^{- \Lambda(p) \tau} |D_x \phi^{\tau}_{\omega, u}v|^{-p}) \geq c_r \E(e^{\Lambda(p)\tau}|D_x\phi^\tau_{\omega,u}v|^p)^{-1} \, .
\]

Next, we note that upon choosing $\gamma$ large enough (and consequently $r$ large enough), we have that
\[
\begin{aligned}
  \E(e^{\Lambda(p)\tau}|D_x\phi^\tau_{\omega,u}v|^p) &\leq \sum_{k > 0}\E(e^{2\Lambda(p)k}|D_x\phi^k_{\omega,u}v|^{2p})^{1/2}P(\tau \geq k)^{1/2}\\
    &\leqc_\gamma V(u) \sum_{k > 0}C^k e^{(2\Lambda(p) - \gamma)k/2} \leqc_\gamma V(u)
\end{aligned},
\]
where we used the Assumption \ref{ass:Lyapunov-Flow-Assumption} with $k=1$ in the last line to bound using the Markov property
\[
  \E |D_x\phi^k_{\omega,u}v|^{2p} \leq C \E\left[|D_x\phi^{k-1}_{\omega,u}v|^{2p}V(u_{k-1})\right] \leqc C^k V(u),
\]
for some $C\geq 1$.
\end{proof}

\subsection{Regularization scheme}

Now we describe a regularization scheme that gives rise to a symbol $a_{p,\eps}$ using properties of $\psi_p$. The main result of this section, Lemma \ref{lemma:apSymbolBddness}, shows that this symbol belongs to the symbol class $S^{-p}_{1-\eps}$, and is an approximate eigenfunction of $T^*P$ (see appendix \ref{appendix:PDOs} for a definition of the symbol class).

For each $h \in (0,1)$, we take $\{\psi_{p}^h\}_{h\in(0,1)}$ to be a family of a $C^\infty$ mollifications of $\psi_p$ in the variables $(x,v) \in S^* M$ up to scale $h$. 
We impose the following requirements on such a mollification. First, we require that for all $u \in \mathbf H$, we have
\begin{align}
\min_{(x,v) \in S^*M} \psi_p(u,x,v) & \leq \min_{(x,v) \in S^*M} \psi_p^h(u,x,v) \\
 \max_{(x,v) \in S^*M} \psi_p^h(u,x,v) &\leq \max_{(x,v) \in S^*M}\psi_p(u,x,v).
\end{align}
We emphasize that $C$ is independent of $u$.
Letting $d_{S^*M} : S^*M\times S^*M \to [0,\infty)$ be the geodesic distance between any two points on $S^*M$, and let $\mathrm{Lip}(S^*M)$ be the corresponding Lipschitz norm: 
 \begin{align}
\|\psi_p^h(u)\|_{\mathrm{Lip}(S^*M)}&\leq C  \|\psi_p(u)\|_{\mathrm{Lip}(S^*M)},\\
\|\psi_p^h(u) - \psi_p(u)\|_{L^\infty(S^*M)} & \leq Ch \|\psi_p(u)\|_{\mathrm{Lip}(S^*M)}.
\end{align}
Regarding higher order derivatives, given any parametrization $\nu : W \to S^*M$ where $W \subset \R^{2n-1}$ is an open set, we require that
\begin{align}
\|\psi_p^h(u,\nu(z))\|_{C^k(W)} &\leq C_{k,\nu} h^{1-k} \|\psi_p(u)\|_{\mathrm{Lip}(S^*M)}, \text{ for all } k > 1.
\end{align}
The existence of the family $\{\psi_p^h\}$ is standard. For instance, when $ M =\T^d$, we can define $\psi_p^h$ using convolution by a standard mollifier.
 One can generalize such a mollification scheme to arbitrary $ M$ by using the exponential map or a partition of unity.

Using the boundedness of $\psi_p$ w.r.t. the $\mathrm{Lip}_V$ norm in \eqref{eq:lipnorm}, the above conditions imply that for each  $\eta \in (0,1/a^*)$ and $\beta \geq 1$, 
\begin{align}\label{eq:psi_p_delta_to_V}
\|\psi_p^h(u)\|_{L^\infty(S^*M)} +\|\psi_p^h(u)\|_{\mathrm{Lip}(S^*M)} + \frac{1}{h} \|\psi_p^h(u) - \psi_p(u)\|_{L^\infty(S^*M)}\lesssim_{\beta,\eta} V_{\beta,\eta}(u)
\end{align}
and
\begin{align}
 \|\psi_p^h(u,\nu(z))\|_{C^k(W)} &\leq C_{k,\nu,\beta,\eta}h^{1-k} V_{\beta,\eta}(u)
\end{align}
for any $k > 1$ and $\nu: W \to S^*M$ described above. Moreover,
\begin{align}\label{eq:psi_p_lower}
\frac{1}{C_{\beta,\eta}V_{\beta,\eta}(u)}  \leq \psi_p^h(u,x,\xi)
\end{align}
independently of $h > 0$.

Fix a smooth, non-negative dyadic partition of unity $\{\chi_N\}_{N=2^k,\ k \in \mathbb N_0}$ of $\R$, such that for each $N \geq 2$, we have $\mathrm{spt} \ \chi_N \subseteq B_{2N}\setminus B_{N/2}$,  $\chi_1 \subseteq B_1$, and
\[
\sum_{N} \chi_N(z) \equiv 1
\]
for all $z \in \R$.
For convenience, given any $(x,\xi) \in T^* M$, we define 
\[
\hat \xi := \frac{\xi}{|\xi|} \in S_x^* M
\]
For longer expressions, we shall use the notation $ (\xi)^{\wedge}=\hat \xi $.
Let $\eps \in (0,\frac{1}{2})$. We define
\begin{align}\label{eq:apeps_definition}
a_{p,\eps}(u,x,\xi) := \sum_{N \geq 2} \chi_N(|\xi|) \psi^{N^{-\eps}}_p\left(u,x,\hat \xi\right) \frac{1}{|\xi|^p}.
\end{align}
For each $\eps>0$, this defines an approximation of $a_p(u,x,\xi)$, which improves as $|\xi| \to \infty$. The parameter $\eps$ determines the rate of convergence. Since the role of $a_p$ is that of an  eigenfunction to the operator $T^*P$, we think of $a_{p,\eps}$ as an approximate eigenfunction with error
\[
r_{p,\eps} = T^*P a_{p,\eps} - e^{-\Lambda(p)}a_{p,\eps}.
\]
The following Lemma shows that $a_{p,\eps}$ and $r_{p,\eps}$ belong to appropriate symbol classes from which we can construct pseudo-differential operators, with $r_{p,\eps}$ having strictly lower order than $a_{p,\eps}$ (see Appendix \ref{appendix:PDOs}):

\begin{lemma} \label{lemma:apSymbolBddness}  Let $\eta \in (0,1)$ and $\eps \in (0,\frac{1}{4})$.
For each $u \in \mathbf H$,  $a_{p,\eps}(u) \in S^{-p}_{1-\eps}(T^* M)$, and  $r_{p,\eps}(u) \in S^{-p-\eps}_{1-2\eps}(T^* M)$. 
For each $k \in \N_0$, the seminorms of $a_{p,\eps}$ (defined in Appendix \ref{sec:quantization_on_manifolds})
are bounded as follows: for any $\beta \geq 1$,  
\begin{align}
[a_{p,\eps}(u)]^{-p,1-\eps}_{k} \lesssim_{k,p,\eps,\beta,\eta} V_{\beta,\eta}(u)\label{eq:apepsBddness}.
\end{align}

On the other hand, there exists $ \beta(k,p,\eps) \geq 1 $  (depending only on $k$, $p$ and $\eps$) such that
\begin{align}
[r_{p,\eps}(u)]_{k}^{-p-\eps,1-2\eps} \lesssim_{k,p,\eps,\eta} V_{\beta({k,p,\eps}),\eta}(u). \label{eq:apepsApproximation}
\end{align}

\end{lemma}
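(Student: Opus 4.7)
The two estimates are obtained by different mechanisms. The $S^{-p}_{1-\eps}$-bound on $a_{p,\eps}$ is a direct computation on each dyadic block, using the mollification bounds \eqref{eq:psi_p_delta_to_V}. The improved order $-p-\eps$ and the weaker parameter $1-2\eps$ for $r_{p,\eps}$ both come from the eigenfunction identity $T^*Pa_p=e^{-\Lambda(p)}a_p$, which produces the cancellation
\[
r_{p,\eps}=T^*P(a_{p,\eps}-a_p)-e^{-\Lambda(p)}(a_{p,\eps}-a_p),
\]
leaving only the mollification gain $|a_{p,\eps}-a_p|\lesssim V_{\beta,\eta}(u)\,h\,|\xi|^{-p}\lesssim V_{\beta,\eta}(u)\brak{\xi}^{-p-\eps}$.

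\emph{Step 1: bound on $a_{p,\eps}$.} On the support of $\chi_N(|\xi|)$ one has $|\xi|\sim N$ and $h=N^{-\eps}\sim\brak{\xi}^{-\eps}$. Working in a local chart of $M$, I would apply the Leibniz rule to the three factors $\chi_N(|\xi|)$, $|\xi|^{-p}$, and $\psi_p^h(u,x,\hat\xi)$. Each $\xi$-derivative contributes at most $\brak{\xi}^{-(1-\eps)}$: derivatives of $\chi_N$ or of $|\xi|^{-p}$ cost $\brak{\xi}^{-1}$, and $\partial_\xi\hat\xi=O(|\xi|^{-1})$ composed with a $\hat\xi$-derivative of $\psi_p^h$ costs $\brak{\xi}^{-1}\cdot h^{-1}=\brak{\xi}^{-(1-\eps)}$ by the higher-order mollification bound $\|D_v^\ell\psi_p^h(u,\cdot)\|_{L^\infty(S^*M)}\lesssim V_{\beta,\eta}(u)h^{1-\ell}$. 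Each $x$-derivative hits only $\psi_p^h$ and costs $h^{1-|\beta|}\lesssim h^{-|\beta|}=\brak{\xi}^{\eps|\beta|}$, with the first derivative absorbed by the uniform Lipschitz bound. Summing the finitely many overlapping dyadic pieces yields
\[
|\partial_\xi^\alpha\partial_x^\beta a_{p,\eps}(u,x,\xi)|\lesssim V_{\beta,\eta}(u)\brak{\xi}^{-p-(1-\eps)|\alpha|+\eps|\beta|},
\]
which is \eqref{eq:apepsBddness}.

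\emph{Step 2: bound on $r_{p,\eps}$.} The case $|\alpha|=|\beta|=0$ follows from the cancellation displayed above: composing $|a_{p,\eps}-a_p|\lesssim V_{\beta,\eta}(u)\brak{\xi}^{-p-\eps}$ with $\check A_{\omega,u,x}\xi$ under $T^*P$ and using $\brak{\check A_{\omega,u,x}\xi}^{-q}\lesssim\mathcal Q_1(\phi)^{q}\brak{\xi}^{-q}$ for $q>0$, Assumption \ref{ass:Lyapunov-Flow-Assumption} with $k=1$, $b=p+\eps$ absorbs the random factor into $V_{\beta(1,p+\eps),\eta}(u)$. For $|\alpha|+|\beta|\geq 1$, I would invoke the pointwise inequality
\[
\brak{\xi}^{-p-(1-\eps)|\alpha|+\eps|\beta|}\leq\brak{\xi}^{-p-\eps-(1-2\eps)|\alpha|+2\eps|\beta|}\qquad(|\alpha|+|\beta|\geq 1),
\]
whose exponent difference equals $\eps(|\alpha|+|\beta|-1)\geq 0$; hence Step 1 immediately yields the desired $S^{-p-\eps}_{1-2\eps}$-bound for $e^{-\Lambda(p)}a_{p,\eps}$. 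For $T^*Pa_{p,\eps}$, I would differentiate under the expectation $T^*Pa_{p,\eps}(u,x,\xi)=\E[a_{p,\eps}(u_1,\phi_{\omega,u}(x),\check A_{\omega,u,x}\xi)]$ and apply Fa\`a di Bruno; each derivative spawns a polynomial in derivatives of $\phi_{\omega,u}$ and $\check A_{\omega,u,x}$ of total weight $\lesssim\mathcal Q_k(\phi)^{b(k,p,\eps)}$, and, after converting $\brak{\check A\xi}^q$ to $\brak{\xi}^q$ via $\brak{\check A\xi}^q\lesssim\mathcal Q_1(\phi)^{|q|}\brak{\xi}^q$, the Step 1 bound contributes the required $\brak{\xi}$-decay factor times $V_{\beta,\eta}(u_1)$ together with a further random power of $\mathcal Q_1(\phi)$. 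Assumption \ref{ass:Lyapunov-Flow-Assumption} then absorbs all such random factors into a single $V_{\beta(k,p,\eps),\eta}(u)$, proving \eqref{eq:apepsApproximation}.

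\emph{Main obstacle.} The delicate bookkeeping occurs in the Fa\`a di Bruno expansion for $T^*Pa_{p,\eps}$: one must determine the precise combined power $b(k,p,\eps)$ of $\mathcal Q_k(\phi)$ and verify that Assumption \ref{ass:Lyapunov-Flow-Assumption}, applied at that order, dominates the entire product of random factors together with $V_{\beta,\eta}(u_1)$ by a single $V_{\beta(k,p,\eps),\eta}(u)$. A secondary subtlety is that $a_p$ itself has only Lipschitz regularity in $(x,v)$, so the identity $T^*Pa_p=e^{-\Lambda(p)}a_p$ can be used only at the $L^\infty$ level; all higher-order information is carried by $a_{p,\eps}$ through the Step 1 bound and the embedding noted above.
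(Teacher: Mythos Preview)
Your proposal is correct and follows essentially the same strategy as the paper: Step~1 is a dyadic-block computation using the mollification bounds, Step~2a uses the eigenfunction cancellation $r_{p,\eps}=T^*P(a_{p,\eps}-a_p)-e^{-\Lambda(p)}(a_{p,\eps}-a_p)$ at the $L^\infty$ level, and Step~2b uses the crude exponent inequality $\eps(|\alpha|+|\beta|-1)\geq0$ to upgrade the $S^{-p}_{1-\eps}$ bound to an $S^{-p-\eps}_{1-2\eps}$ bound, with Assumption~\ref{ass:Lyapunov-Flow-Assumption} absorbing the random $\mathcal Q_k(\phi)$ factors coming from the chain rule. The only stylistic difference is that in Step~1 the paper passes to polar coordinates $\xi=\lambda\sigma(y)$ to exploit the $-p$-homogeneity directly (so that the $\lambda$-dependence factors out and all derivatives land on a single smooth parametrization $\nu(z)$ of $S^*M$), whereas you proceed via Leibniz on the three factors and track $\partial_\xi\hat\xi=O(|\xi|^{-1})$ by hand; both routes yield the same estimate.
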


\begin{proof} We prove the bounds above in multiple steps:\\

\noindent{\em Step 1:} We prove \eqref{eq:apepsBddness}.
Fix a coordinate chart $  \varkappa_\iota :U'_\iota \to U_\iota$ from the atlas $\{\psi_\iota\}_{\iota \in J}$ as defined in Appendix \ref{appendix:PDOs} and set $\zeta_{\iota} = \varkappa_{\iota}^{-1}$.
It suffices to show that for any such $\iota \in J$, and any two multi-indices $\alpha,\alpha'$, we have
\begin{align}
|\partial_{x}^\alpha \partial_\xi^{\alpha'} \{a_{p,\eps}(u,\zeta_\iota(x),D_x\zeta_{\iota}^{-\top}\xi)\}| \lesssim_{\iota,\alpha,\alpha',\beta,\eta,p,\eps} V_{\beta,\eta}(u) \langle \xi\rangle^{-p + \eps|\alpha|-(1-\eps)|\alpha'|}.
\end{align}
In proving this, we will omit the dependence of implicit constants on the parameters listed above. We will also drop the subscript on the coordinate chart and write $\zeta = \zeta_{\iota}$.
To show this, first observe that
\[
\psi^{N^{-\eps}}_p\left(u,\zeta(x),(D_x\zeta^{-\top} \xi)^\wedge\right) \frac{1}{|\xi|^p}
\]
is a  $-p$-homogeneous function in $\xi$. Moreover, given any cone  $\mathcal C \subsetneq \R^n$, we can parameterize $\xi \neq 0$ in spherical coordinates using
\[
\xi(\lambda,y) = \lambda \sigma(y)
\]
where $\lambda >0$ and $\sigma(z)$ maps an open subset of $\R^{n-1}$ to $\S^{d-1}$. Then, with this parametrization, we have
\[
\psi^{N^{-\eps}}_p\left(u,\zeta(x),(D_x\zeta^{-\top} \xi)^\wedge\right) \frac{1}{|\xi|^p} =\psi^{N^{-\eps}}_p\left(u,\zeta(x),(D_x\zeta^{-\top} \sigma(y))^\wedge\right) \frac{1}{\lambda ^p |D_x\zeta^{-\top} \sigma(z)|^p}
\]
We call $z = (x,y)$ and
\[
\nu(z) = (\zeta(x),( D_x\zeta^{-\top}\sigma(y))^\wedge) \in S^*M,
\]
so that $\nu$ defines a smooth parametrization of $S^*M$ in some open subset.
Furthermore, the partial derivatives $\partial_\xi$ transform as follows under the change of coordinates:
\[
\frac{\partial}{\partial \xi_i} =  \sigma_i(y)\frac{\partial}{\partial \lambda} + \frac{1}{\lambda}\sum_{j =1}^{n-1} S_{ij}(y) \frac{\partial}{\partial y_j}
\] 
where $S(y)$ is a $n\times (n-1)$ matrix given by the Moore-Penrose pseudo-inverse of $D_y \sigma^\top$, i.e.
\[
\sum_{k=1}^{n-1} S_{ik}(y) \frac{\partial \sigma_j}{\partial y_k} = \delta_{ij} -\sigma_i\sigma_j,
\]
and for any $y$, the range of $S(y)$ (as a matrix) is $\{\sigma(y)\}^\perp$.

Thus, for all $\xi \in \R^d$ such that $\frac{\xi}{|\xi|}$ is in the range of $\sigma$, we use the $-p$ homogeneity in $\lambda$ to bound
\begin{align}
\left|\partial_{x}^\alpha \partial_\xi^{\alpha'} \left( \psi^{N^{-\eps}}_p\left(u,\zeta(x),(D_x\zeta^{-\top}\xi)^\wedge\right) \frac{1}{|\xi|^p}\right)\right| &\lesssim \frac{\|\psi_p^{N^{-\eps}}(u,\nu(z))\|_{C^{\alpha + \alpha'}_z}}{\lambda^{-p + |\alpha'|}} \\
&\lesssim V_{\beta,\eta}(u) \frac{N^{\eps(\alpha + \alpha')}}{\lambda^{-p + |\alpha'|}} \\
&\lesssim V_{\beta,\eta}(u) \frac{N^{\eps(\alpha + \alpha')}}{|D_x\zeta^{-\top} \xi|^{-p + |\alpha'|}}.
\end{align}
By covering $\xi \in \R^n$ by finitely many cones for we which can construct such a smooth map $\sigma$, we recover the bound for all possible $\xi \in \R^n$.
In particular, when $|D_x\zeta^{-\top} \xi| \sim N$ (as is the case when $|D_x\zeta^{-\top} \xi|$ is in the support of $\chi_N$)
 the above is bounded by
\[
\lesssim V_{\alpha',\eta}(u) |D_x\zeta^{-\top}\xi|^{-p- (1-\eps)|\alpha'| + \eps |\alpha|}.
\]
On the other hand, using the parametrization $\sigma(y) = \frac{\xi}{|\xi|}$ once more, we have
\[
\chi_N(|D_x\zeta^{-\top}\xi|) = \chi\left(\frac{\lambda}{N}|D_x\zeta^{-\top}\sigma(y)|\right).
\]
Then, it is straightforward to show that for all $\lambda \sim N$ for which the above expression is nonzero that
\[
|\partial_{x}^\alpha \partial_\xi^{\alpha'} (\chi_N(|D_x\zeta^{-\top}\xi|))| \lesssim \frac{1}{\lambda^{|\alpha'|}} \lesssim \frac{1}{|D_x\zeta^{-\top}\xi|^{|\alpha'|}}.
\]
We then apply the multivariable chain rule to each term  in the sum which defines $a_{p,\eps}$ as in \eqref{eq:apeps_definition}
to recover \eqref{eq:apepsBddness}.\\

\noindent {\em Step 2a:} We now show \eqref{eq:apepsApproximation} for $k=0$. It suffices to show that for all $\beta \geq 1$, $\eta \in (0,1)$ and $(x,\xi)\in T^*M$, we have
\[\label{eq:apepsapproximation0}
|r_{p,\eps}(u,x,\xi)| \lesssim_{\beta,\eta,p,\eps} V_{\beta,\eta}(u) \langle  \xi\rangle^{-p-\eps},
\]
and then take $\beta = \beta(k,p,\eps)$ as defined in Step 2b below. Once again, dependence of constants on the parameters enumerated above is implied.
 Using the identity $T^*P a_{p} = e^{-\Lambda(p)}a_p$, we  write
\begin{align}
T^*P a_{p,\eps}(u) - e^{-\Lambda(p)} a_{p,\eps}(u) &= T^*P [a_{p,\eps}(u) -  a_{p}(u)]  - e^{-\Lambda(p)} (a_{p,\eps}(u ) - a_{p}(u)).\end{align}
The difference appearing above can be expressed as follows:
\[
a_{p,\eps}(u,x,\xi) - a_{p}(u,x,\xi) = -\chi_1(|\xi|)\psi_p(u,x,\hat\xi)\frac{1}{|\xi|^p} +  \sum_{N \geq 2} \chi_N(|\xi|) [\psi^{N^{-\eps}}_p -\psi_p](u,x,\hat \xi)  \frac{1}{|\xi|^p}.
\]
Since the first term is compactly supported, it is trivial to bound it by $\frac{V(u)}{|\xi|^{p+\eps}}$. As for the second term, we have
\begin{align}
\left|   \sum_{N \geq 2} \chi_N(|\xi|) [\psi^{N^{-\eps}}_p -\psi_p](u,x,\hat \xi)  \frac{1}{|\xi|^p}\right| &\lesssim  \sum_{N \geq 2} \chi_N(|\xi|) \|\psi^{N^{-\eps}}_p(u) -\psi_p(u)\|_{L^\infty(S^*M)} \frac{1}{|\xi|^p}\\
&\lesssim \sum_{N \geq 2} \chi_N(|\xi|)  \frac{ V_{\beta,\eta}(u) N^{-\eps}}{|\xi|^p}\\
&\lesssim \frac{V_{\beta,\eta}(u)}{|\xi|^{p+\eps}}.
\end{align}
Thus, 
\[
|a_{p,\eps}(u,x,\xi) - a_{p}(u,x,\xi)| \lesssim \frac{V_{\beta,\eta}(u)}{|\xi|^{p+\eps}}.
\]
We can apply this bound to the same difference under $T^*P$, in conjunction with Assumption \ref{ass:Lyapunov-Flow-Assumption}, 
\begin{align}
|T^*P[a_{p,\eps}(u,x,\xi) - a_{p}(u,x,\xi)]| &= \E|a_{p,\eps}(u_1,\phi^1(x),(D_x\phi^1)^{-\top}\xi) - a_{p}(u,\phi^1(x),(D_x\phi^1)^{-\top}\xi))| \\
& \lesssim \E\left[\frac{V_{\beta,\eta}(u_1)}{| (D_x\phi^1)^{-\top}\xi|^{p+\eps}}\right] \lesssim \frac{V_{\beta,\eta}(u)}{|\xi|^{p+\eps}}.
\end{align}
In summary,
\begin{align}
|r_{p,\eps}(u,x,\xi)| \lesssim \frac{V_{\beta,\eta}(u)}{|\xi|^{p+\eps}}.
\end{align} 
On the other hand,  we have the uniform bound $|r_{p,\eps}(u,x,\xi)| \lesssim V_{\beta,\eta}(u)$,
since $a_{p,\eps}$ is zero near the pole of $\frac{1}{|\xi|^p}$. Taking the minimum of these two bounds, we conclude \eqref{eq:apepsapproximation0}.\\

\noindent {\em Step 2b:} As in step 1, we fix a coordinate chart $\zeta^{-1} = \zeta_{\iota}^{-1}$, and suppress the dependence of constants on the parameters. For higher order derivatives of $r_{p,\eps}$, 
we let $|\alpha| +|\beta| = k \geq 1$, and use the crude bound
\begin{align}
|\partial_{x}^\alpha \partial_\xi^\beta \{r_{p,\eps}(u,\zeta(x),D_x\zeta^{-\top}\xi)\}| &\leq  
|\partial_{x}^\alpha \partial_\xi^\beta \{a_{p,\eps}(u,\zeta(x),D_x\zeta^{-\top}\xi)\}|\\
&\quad + \E[|\partial_{x}^\alpha \partial_\xi^\beta \{a_{p,\eps}(u_1, \phi^1\circ \zeta(x),(D_{\zeta(x)}\phi^1)^{-\top} D_x \zeta^{-\top}\xi)\}| ].
\end{align}
Then, we recycle \eqref{eq:apepsBddness} to bound the above by
\begin{align}
|\partial_{x}^\alpha \partial_\xi^\beta \{r_{p,\eps}(u,\zeta(x),D_x\zeta^{-\top}\xi)\}| &\lesssim V_{\beta,\eta}(u)\langle \xi\rangle^{-p+\eps |\alpha| -(1-\eps)|\beta|} \\
&\quad +\E[Q_{k}(\phi^1)^k V_{\beta,\eta}(u_1)\langle( D_{\zeta(x)}\phi^{1})^{-\top}\xi\rangle^{-p+\eps |\alpha| - (1-\eps)|\beta|} ].
\end{align}
Now, in the latter term, we have
\[
\langle( D_{\zeta(x)}\phi^{1})^{-\top}\xi\rangle^{-p+\eps |\alpha| - (1-\eps)|\beta|} \leq \mathcal Q_1(\phi^1)^{p + \eps |\alpha| +(1-\eps)|\beta|} \langle\xi\rangle^{-p+\eps |\alpha| - (1-\eps)|\beta|}
\]
Hence, applying Assumption \ref{ass:Lyapunov-Flow-Assumption},
\begin{align}
&|\partial_{x}^\alpha \partial_\xi^\beta \{r_{p,\eps}(u,\zeta(x),D_x\zeta^{-\top}\xi)\}|\\
&\quad \lesssim  \left(V_{\beta,\eta}(u) +\E[ \mathcal Q_k(\phi^1)^{k+p + \eps |\alpha| +(1-\eps)|\beta|}V_{\beta,\eta}(u_1)]\right)\langle \xi\rangle^{-p+\eps |\alpha| - (1-\eps)|\beta|}  \\
&\quad \lesssim V_{\beta,\eta}(u)\langle \xi\rangle^{-p+\eps |\alpha| -(1-\eps)|\beta|}.
\end{align}
Now, observe that since $k\geq 1$,
\[
-p+\eps |\alpha| - (1-\eps)|\beta| \leq -p-\eps+2\eps |\alpha| - (1-2\eps)|\beta|.
\]
Combining this with Step 2a, we conclude \eqref{eq:apepsApproximation} for all $k\geq 0$.
\end{proof}

As an application of G\r arding's inequality (Proposition \ref{prop:ManifoldGarding}), the  lemma above implies that the ``norm'' $f\mapsto \langle\mathrm{Op}(a_{p,\eps})f,f\rangle$ defines the same topology as $H^{-\frac{p}{2}}(M)$, provided one has control on lower frequencies.

\begin{corollary}[Quantitative G\r arding's inequality] \label{cor:GardingSpecialized} Under the hypotheses of Lemma \ref{lemma:apSymbolBddness}, we have
\begin{align}
\frac{1}{C_{p,\eps,\beta,\eta} V_{\beta,\eta}(u)} \|f\|_{H^{-\frac{p}{2}}}^2 - C_{p,\eps,\beta,\eta} V_{\beta,\eta}(u) \|f\|_{H^{-\frac{d+3}{2}}}^2 \leq \langle \mathrm{Op}(a_{p,\eps}(u)) f,f\rangle \leq C_{p,\eps,\beta,\eta} V_{\beta,\eta}(u) \|f\|_{H^{-\frac{p}{2}}}^2.
\end{align}

\begin{proof}
Observe that by \eqref{eq:psi_p_lower} and the definition of $a_{p,\eps}$ in equation \eqref{eq:apeps_definition}, we have 
\begin{align}\label{eq:apepsEllipticity}
  \frac{1}{C_{p,\eps,\beta,\eta}V_{\beta,\eta}(u)| \xi|^{p}}  \leq a_{p,\eps}(u,x,\xi).
\end{align} 
for all $|\xi| \geq 1$, and for any $\beta \geq 1$. Thus, 
 we combine G\r arding's inequality in Proposition \ref{prop:ManifoldGarding} in the case of $m = -p$ and $\frac{m}{2} -s= -\frac{d+3}{2}$ with the seminorm bounds \eqref{eq:apepsBddness} and \eqref{eq:apepsEllipticity}. 
\end{proof}
\end{corollary}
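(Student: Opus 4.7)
The plan is to bound $\langle \mathrm{Op}(a_{p,\eps}(u)) f, f\rangle$ from above and below separately, using the symbol-class membership established in Lemma \ref{lemma:apSymbolBddness} together with the pointwise ellipticity coming from the lower bound \eqref{eq:psi_p_lower} on $\psi_p^h$.

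For the upper bound, I would invoke the standard $L^2$-continuity theory for pseudo-differential operators of type $(1-\eps,\eps)$: since $a_{p,\eps}(u) \in S^{-p}_{1-\eps}(T^*M)$ with seminorms bounded by $V_{\beta,\eta}(u)$ in view of \eqref{eq:apepsBddness}, the operator $\mathrm{Op}(a_{p,\eps}(u))$ is continuous from $H^{-p/2}(M)$ into $H^{p/2}(M)$ with operator norm $\lesssim V_{\beta,\eta}(u)$. Duality then gives
\[
|\langle \mathrm{Op}(a_{p,\eps}(u)) f, f\rangle| \leq \|\mathrm{Op}(a_{p,\eps}(u)) f\|_{H^{p/2}} \|f\|_{H^{-p/2}} \lesssim V_{\beta,\eta}(u) \|f\|_{H^{-p/2}}^2.
\]

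For the lower bound, I would feed the symbol directly into G\r arding's inequality (Proposition \ref{prop:ManifoldGarding}) at order $m=-p$, with the loss parameter chosen so that $m/2 - s = -(d+3)/2$, i.e.\ $s = (d+3-p)/2$. The essential input is the elliptic lower bound \eqref{eq:apepsEllipticity}, which, combined with the fact that $a_{p,\eps}$ is supported where $|\xi|$ is bounded away from $0$, guarantees that $a_{p,\eps}(u,x,\xi) - \frac{1}{C\,V_{\beta,\eta}(u)} \langle\xi\rangle^{-p}$ is nonnegative outside a compact set in $\xi$ and itself lies in $S^{-p}_{1-\eps}$ with seminorms $\lesssim V_{\beta,\eta}(u)$. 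Applying G\r arding to this nonnegative symbol produces
\[
\left\langle \mathrm{Op}\Bigl(a_{p,\eps}(u) - \tfrac{1}{C\,V_{\beta,\eta}(u)} \langle\xi\rangle^{-p}\Bigr) f, f\right\rangle \geq - C_{p,\eps,\beta,\eta}\, V_{\beta,\eta}(u) \|f\|_{H^{-(d+3)/2}}^2,
\]
and rearranging (using that $\langle \mathrm{Op}(\langle\xi\rangle^{-p}) f, f\rangle \gtrsim \|f\|_{H^{-p/2}}^2$ up to a smoothing remainder absorbable in the $H^{-(d+3)/2}$ term) yields the claimed lower bound.

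There is no serious obstacle here; the statement is essentially packaging. The only bookkeeping issue is tracking how the constants from G\r arding's theorem depend on the seminorms of the symbol, which by \eqref{eq:apepsBddness} introduces exactly the $V_{\beta,\eta}(u)$-weight that appears in the corollary. One should also verify that the choice of $s$ is admissible for Proposition \ref{prop:ManifoldGarding}, which is the case since $-p/2 > -(d+3)/2$ for $p$ small, guaranteeing a genuine gain of regularity in the error term.
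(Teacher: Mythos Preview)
Your proposal is correct and follows essentially the same route as the paper: the upper bound comes from the $H^{-p/2}\to H^{p/2}$ boundedness of $\mathrm{Op}(a_{p,\eps}(u))$ via the seminorm estimate \eqref{eq:apepsBddness} (this is exactly the second inequality in Proposition \ref{prop:manifoldPDObddness}), and the lower bound comes from feeding the ellipticity estimate \eqref{eq:apepsEllipticity} into G\r arding's inequality (Proposition \ref{prop:ManifoldGarding}) with $m=-p$ and $\tfrac{m}{2}-s=-\tfrac{d+3}{2}$. The only cosmetic difference is that the paper applies Proposition \ref{prop:ManifoldGarding} directly to $a_{p,\eps}(u)$ with $c_0 = (C V_{\beta,\eta}(u))^{-1}$, whereas you phrase it as applying G\r arding to the nonnegative difference $a_{p,\eps}(u)-\tfrac{1}{CV_{\beta,\eta}(u)}\langle\xi\rangle^{-p}$; these are equivalent.
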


\section{Low regularity mixing}

\label{sec:mixing}
%!TEX root = main.tex

In this section, we use the results of Section \ref{sec:symbol} to show prove Theorem \ref{thm:AM}, i.e. that $f_n$ decays exponentially fast in $H^{-\frac{p}{2}}$.  We break this up into two steps: first, we exhibit time 1 exponential decay of high frequencies of $f_1$ through the pseudo-differential operator $\mathrm{Op}(a_{p,\eps})$ as in Lemma \ref{lemma:time1decay}. Second, we show that low frequencies decay exponentially for long times, as in Lemma \ref{lemma:lowFreqMixing}. The latter of these two results does not rely on  $a_{p,\eps}$. Rather, it is a consequence of the two-point geometric ergodicity in Assumption \ref{ass:2pt}. We combine these two lemmas to prove Theorem \ref{thm:AM} in Section \ref{sec:proof}.

\subsection{Short time, high frequency decay}

Lemma \ref{lemma:apSymbolBddness} and Egorov's Theorem (Theorem \ref{theorem:egorov}) imply that under conjugation by the time-1 flow map $\phi^1$, the operator $\mathrm{Op}(a_{p,\eps})$ decays by a factor of $e^{-\Lambda(p)}$, plus a lower order remainder:
 
\begin{lemma}\label{lemma:time1decay}
Let $\eps \in (0,\frac{1}{4})$, $p\in (0,1)$, and $\eta \in (0,1/a^*)$.  Then there exists some $\beta^\prime = \beta^\prime(p,\eps) \geq 1$ (depending only on $p$ and $\eps$) such that
\begin{equation}
|\E[\langle \mathrm{Op}(a_{p,\eps}(u_1))f_1, f_1\rangle] - e^{-\Lambda(p)} \langle \mathrm{Op}( a_{p,\eps}(u)) f, f\rangle | \lesssim_{p,\eps,\eta} V_{\beta^\prime,\eta}(u) \|f\|_{H^{-\frac{p+\eps}{2}}}^2.
\end{equation}
\end{lemma}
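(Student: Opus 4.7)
The key idea is to use Egorov's theorem to conjugate the random PDO $\mathrm{Op}(a_{p,\eps}(u_1))$ by the Koopman operator of $\phi^1$, then exploit the approximate eigenfunction property of $a_{p,\eps}$ under $T^*\!P$ recorded in Lemma \ref{lemma:apSymbolBddness}. Since $\phi^1$ is volume-preserving, $U_{\phi^1}g:=g\circ\phi^1$ is unitary on $L^2(M)$ and $f_1=U_{\phi^1}^{-1}f$, so
\[
\langle\mathrm{Op}(a_{p,\eps}(u_1))f_1,f_1\rangle = \bigl\langle U_{\phi^1}\,\mathrm{Op}(a_{p,\eps}(u_1))\,U_{\phi^1}^{-1}f,\,f\bigr\rangle.
\]
Egorov's theorem (Theorem \ref{theorem:egorov} in Appendix \ref{appendix:PDOs}) then rewrites the conjugated operator as
\[
U_{\phi^1}\mathrm{Op}(a_{p,\eps}(u_1))U_{\phi^1}^{-1} \;=\; \mathrm{Op}\bigl(a_{p,\eps}(u_1,\phi^1(x),(D_x\phi^1)^{-\top}\xi)\bigr) + R(\omega),
\]
where $R(\omega)$ has symbol in $S^{-p-\eps}_{1-2\eps}$ whose seminorms are dominated by finitely many seminorms of $a_{p,\eps}(u_1)$ times a polynomial in the flow quantities $\mathcal Q_m(\phi^1)$, for some finite $m=m(p,\eps)$.

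\textbf{Expectation and the eigenvalue identity.} By bilinearity of the inner product and linearity of the quantization, I can push $\E$ inside the symbol. By definition of $T^*\!P$, the expected pullback symbol is $T^*\!P a_{p,\eps}(u,x,\xi)$, which by Lemma \ref{lemma:apSymbolBddness} equals $e^{-\Lambda(p)}a_{p,\eps}(u,x,\xi)+r_{p,\eps}(u,x,\xi)$. Subtracting $e^{-\Lambda(p)}\langle\mathrm{Op}(a_{p,\eps}(u))f,f\rangle$ from both sides yields
\[
\E\langle\mathrm{Op}(a_{p,\eps}(u_1))f_1,f_1\rangle - e^{-\Lambda(p)}\langle\mathrm{Op}(a_{p,\eps}(u))f,f\rangle \;=\; \langle\mathrm{Op}(r_{p,\eps}(u))f,f\rangle + \E\langle R(\omega)f,f\rangle,
\]
and both remainder symbols lie in the same class $S^{-p-\eps}_{1-2\eps}$.

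\textbf{Bounding the two remainders.} Standard $L^2$-continuity for pseudo-differential operators gives, for any $c\in S^{-p-\eps}_{1-2\eps}$,
\[
|\langle\mathrm{Op}(c)f,f\rangle| \;\lesssim\; [c]_{K}^{-p-\eps,1-2\eps}\,\|f\|_{H^{-(p+\eps)/2}}^2
\]
for some fixed $K=K(p,\eps,d)$. Applied to $r_{p,\eps}(u)$ this gives the bound with constant $V_{\beta(K,p,\eps),\eta}(u)$ via \eqref{eq:apepsApproximation}. For the Egorov remainder, Step 1 combined with \eqref{eq:apepsBddness} bounds $[R(\omega)]_K^{-p-\eps,1-2\eps}\lesssim \mathcal Q_m(\phi^1)^C V_{\beta_0,\eta}(u_1)$ for some $C,\beta_0$ depending on $p,\eps$; Assumption \ref{ass:Lyapunov-Flow-Assumption} applied with this choice of $k=m$ and $b=C$, followed by Jensen to extract the $a_*$-th root, gives
\[
\E\bigl[\mathcal Q_m(\phi^1)^C V_{\beta_0,\eta}(u_1)\bigr] \;\lesssim\; V_{\beta'(p,\eps),\eta}(u).
\]
Choosing $\beta^\prime$ to be the larger of the two emergent constants completes the proof.

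\textbf{Main obstacle.} The delicate point is that derivatives of $\phi^1$ are unbounded in $\omega$, so a naive symbol-class control of the Egorov remainder would diverge. Two design choices circumvent this: the symbol class $S^m_{1-\eps}$ (gaining only $1-\eps$ per $\xi$-derivative and paying $\eps$ per $x$-derivative) ensures that one application of Egorov genuinely lowers the order by $\eps$, producing the $H^{-(p+\eps)/2}$ loss stated in the lemma rather than $H^{-p/2}$; and Assumption \ref{ass:Lyapunov-Flow-Assumption} is engineered so that polynomial moments of $\mathcal Q_k(\phi^1)$ weighted against $V_{\beta_0,\eta}(u_1)$ stay dominated by $V_{\beta',\eta}(u)$, absorbing the stochastic growth of flow derivatives into the Lyapunov weight.
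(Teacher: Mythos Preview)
Your proposal is correct and follows essentially the same route as the paper: apply Egorov's theorem to conjugate $\mathrm{Op}(a_{p,\eps}(u_1))$ by the Koopman operator, take expectation to produce $T^*\!P a_{p,\eps}=e^{-\Lambda(p)}a_{p,\eps}+r_{p,\eps}$, and bound the Egorov remainder and $r_{p,\eps}$ separately via Assumption~\ref{ass:Lyapunov-Flow-Assumption} and Lemma~\ref{lemma:apSymbolBddness}. One minor imprecision: the Egorov remainder in class $S^{-p}_{1-\eps}$ actually gains order $2\rho-1=1-2\eps$ (yielding an $H^{-\frac{p+1-2\eps}{2}}$ bound, as the paper records), not $\eps$; the $\eps$ loss in the final estimate comes from the approximate-eigenfunction remainder $r_{p,\eps}\in S^{-p-\eps}_{1-2\eps}$, which is the worse of the two since $\eps<1-2\eps$ for $\eps<\tfrac14$.
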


\begin{proof}
By Egorov's theorem (Theorem \ref{theorem:egorov}), Lemma \ref{lemma:apSymbolBddness} and Assumption \ref{ass:Lyapunov-Flow-Assumption},
\begin{align}
&|\E[\langle \mathrm{Op}(a_{p,\eps}(u_1))f_1, f_1\rangle] - \langle \mathrm{Op}(T^*P a_{p,\eps}(u)) f, f\rangle | \\
&\quad \lesssim  \E\Big[\mathcal Q_{k_0}(\phi)^{k_0}V_{0,\eta}(u_1) \Big ] \|f\|_{H^{-\frac{p+1-2\eps}{2}}}^2 \lesssim V_{\beta_1(k_0),\eta}(u),
\end{align}
for some $\beta_1(k_0) \geq 1$. On the other hand, by the boundedness of pseudo-differential operators (Proposition \ref{prop:manifoldPDObddness}) and Lemma \ref{lemma:apSymbolBddness} above,
\[
\left| \langle \mathrm{Op}(T^*P a_{p,\eps}(u)) f, f\rangle  - e^{-\Lambda(p)}\langle \mathrm{Op}(a_{p,\eps}(u))f,f\rangle\right| \lesssim_\eps V_{\beta_2(k_0,p,\ep),\eta}(u) \|f\|_{H^{-\frac{p+\eps}{2}}}^2,
\]
for some $\beta_2(k_0,p,\ep)$. Combining these two bounds, and fact that $\eps < 1-2\eps$, we finish the proof choosing $\beta^\prime$ to be the bigger of $\beta_1(k_0)$ and $\beta_2(k_0,p,\ep)$.
\end{proof}

\subsection{Long time, low frequency decay} \label{sec:lowDecay}
Next, we show how the assumption of 2-point mixing in Assumption \ref{ass:2pt} implies the exponential decay of $f_n$ in a very low-regularity norm.

\begin{lemma}\label{lemma:lowFreqMixing} There are absolute constants  $\alpha,p\in (0,1)$ such that the following holds. For any $\beta \geq 1$, $\eta \in (0,1/a^*)$  and any $f_0 \in H^{-p/2}$ such that $\int_{ M} f_0 \,\dx = 0$, we have 
\begin{align}\label{eq:lowFreqDecay}
\E[V_{\beta,\eta}(u_n)\|f_n\|_{H^{-\frac{d+3}{2}}}^2] \lesssim  { e^{-\alpha n}}V_{\beta,\eta}(u_0)\|f_0\|_{H^{-\frac{p}{2}}}^2.
\end{align}
Here, we recall that $f_n = T^n f_0 = f_0\circ (\phi^n_{u_0,\omega})^{-1}$.
\end{lemma}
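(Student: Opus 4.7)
The plan is to represent the $H^{-s}$ norm with $s = (d+3)/2$ by the Schwartz kernel of $(I-\Delta_M)^{-s}$ and reduce exponential decay of $\E[V(u_n) \|f_n\|^2_{H^{-s}}]$ to the two-point geometric ergodicity in Assumption \ref{ass:2pt}, with a duality-and-interpolation argument handling the negative regularity of $f_0$. Concretely, let $K_s$ denote the kernel of $(I-\Delta_M)^{-s}$ on $M$, which is bounded and $C^2$ since $2s - d = 3$. For $f_0 \in C^\infty(M)$ with $\int f_0 \dee x = 0$ (dense in the relevant class), volume-preservation of $\phi^n$ and change of variables give
\begin{equation*}
\|f_n\|_{H^{-s}}^2 = \int_M\int_M K_s(\phi^n x_0, \phi^n y_0) f_0(x_0) f_0(y_0) \dx_0 \dy_0,
\end{equation*}
so that after multiplying by $V(u_n)$, taking expectation, and using the mean-zero condition to annihilate the invariant-measure contribution,
\begin{equation*}
\E[V(u_n) \|f_n\|_{H^{-s}}^2] = \int_M\int_M G_n(u_0;x_0,y_0) f_0(x_0) f_0(y_0) \dx_0 \dy_0,
\end{equation*}
where $G_n(u_0;x_0,y_0) := \E\bigl[V(u_n) K_s(x_n,y_n)\,\big|\,u_0,x_0,y_0\bigr] - \mathfrak{m}(V K_s)$.

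The first main input is Assumption \ref{ass:2pt} applied to the observable $\tilde K(u,x,y) := V(u) K_s(x,y)$. Because $\|\tilde K\|_{C_{\mathcal{V}_p}}$ is finite (using that $V = V_{\beta,\eta}$ is weighted-Lipschitz on $U$ in the sense of \eqref{eq:lipnorm}, and $K_s$ is Lipschitz on $M\times M$), we obtain the pointwise estimate
\begin{equation*}
|G_n(u_0;x_0,y_0)| \lesssim e^{-\gamma n} \mathcal{V}_p(u_0,x_0,y_0) \lesssim e^{-\gamma n} V_{\beta,\eta}(u_0) \bigl(d(x_0,y_0)^{-p} \vee 1\bigr),
\end{equation*}
which, for $p < d/2$, integrates to $\|G_n(u_0;\cdot,\cdot)\|_{L^2(M\times M)} \lesssim V_{\beta,\eta}(u_0)\, e^{-\gamma n}$. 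The second main input is a higher-regularity bound on $G_n$: differentiating under the expectation, $|\partial_{x_0}\partial_{y_0} G_n| \leq \|D^2 K_s\|_\infty \E\bigl[V(u_n) |D\phi^n(x_0)| |D\phi^n(y_0)|\bigr]$, and iterating Assumption \ref{ass:Lyapunov-Flow-Assumption} along the Markov chain $(u_n)$ yields an $H^1 \otimes H^1$ bound of the form $\|G_n\|_{H^1 \otimes H^1} \lesssim V_{\beta^*,\eta}(u_0)\, e^{Cn}$ for some $C > 0$ and a larger index $\beta^*$.

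To pair $G_n$ with the distributional $f_0 \otimes f_0$, I would use the duality bound $\bigl|\int\int G_n\, f_0 \otimes f_0\bigr| \leq \|G_n\|_{H^{p/2}_{x_0} \otimes H^{p/2}_{y_0}} \|f_0\|^2_{H^{-p/2}}$ together with Hilbert-space interpolation $\|G_n\|_{H^{p/2} \otimes H^{p/2}} \leq \|G_n\|_{L^2}^{1-p/2} \|G_n\|_{H^1 \otimes H^1}^{p/2}$, which combines to
\begin{equation*}
\|G_n\|_{H^{p/2} \otimes H^{p/2}} \lesssim V_{\beta^*,\eta}(u_0) \exp\bigl\{-\bigl[(1-p/2)\gamma - (p/2)C\bigr] n\bigr\}.
\end{equation*}
Choosing $p \in (0,1)$ small enough that $(1-p/2)\gamma > (p/2)C$ produces the desired exponential decay with rate $\alpha := (1-p/2)\gamma - (p/2)C > 0$, and the index inflation $\beta \to \beta^*$ is absorbed via the super-Lyapunov structure from Remark \ref{eq:super-lyapunov-remark} (using $V^a = V_{\beta a,\eta a}$). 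The principal obstacle is precisely this interpolation: the two-point ergodicity provides fast decay but only in a weak, $\mathcal{V}_p$-weighted sense with a singularity at the diagonal, whereas accommodating the negative regularity $H^{-p/2}$ of $f_0$ forces us to recover $H^{p/2}$ regularity of $G_n$ via derivatives of $\phi^n$ that grow exponentially in $n$. These two effects can be balanced only for $p$ sufficiently small, which is why the lemma asserts the conclusion for some absolute small constant $p \in (0,1)$.
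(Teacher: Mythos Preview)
Your proposal is correct and follows essentially the same approach as the paper: both represent $\|f_n\|_{H^{-(d+3)/2}}^2$ via a smooth kernel, extract exponential decay with a $d(x_0,y_0)^{-p}$ diagonal singularity from Assumption~\ref{ass:2pt}, extract exponential growth of derivatives from Assumption~\ref{ass:Lyapunov-Flow-Assumption} with $k=1$, and then balance the two by taking $p$ small. The only cosmetic difference is that the paper interpolates on the input side (mollifying $f_0$ at scale $h$ and combining an $L^2\to H^{-(d+3)/2}$ bound with an $H^{-1}\to H^{-(d+3)/2}$ bound) while you interpolate on the kernel $G_n$ between $L^2(M\times M)$ and $H^1\otimes H^1$; note also that since $\beta(1,b)=\beta$ in Assumption~\ref{ass:Lyapunov-Flow-Assumption}, no index inflation $\beta\to\beta^*$ actually occurs, so your appeal to the super-Lyapunov structure to absorb it is unnecessary.
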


\begin{proof}  We break the proof into two steps. For convenience, we  write with $V(u) = V_{\beta,\eta}(u)$. \\

\noindent \textit{Step 1:}
First, we show that there exists $\alpha_0 >0$ such that 
\begin{align}
 \E[V(u_n)\|f_n\|_{H^{-\frac{d+3}{2}}}^2] \lesssim e^{-\alpha_0 n}V(u)\|f_0\|_{L^2}^2, \label{eq:L2toLowFreq}
\end{align}
for all $n \in \N_0$.
To prove the bound above, 
let $K(x,y)$  be the kernel of the operator 
\[
\mathrm{Op}(\langle \xi\rangle^{-\frac{d+3}{2}})^* \mathrm{Op}(\langle \xi\rangle^{-\frac{d+3}{2}}) \in S^{-d-3}_1(T^*M).
\]
  In particular,
$K\in C^{3-\eps}(\mathcal M\times \mathcal M)$ for all $\eps \in(0,1)$. We define
\[
K_\times (x,y) =K(x,y) - \iint_{M\times M} K(x',y') \dx' \dy',
\]
so that $K_\times$ is mean zero.
 Then, since $f_0$ is mean zero,
\begin{align}
 \E[V(u_n)\|f_n\|_{H^{-\frac{d+3}{2}}}^2] & = \iint_{ M\times  M}  \E[V(u_n)K_{\times}(\phi^{n}(x),\phi^{n}(y)) ]f_0(x) f_0(y) 
 \dx \dy\\
 &\leq \|\E[ V(u_n)K_\times(\phi^{n}(x),\phi^{n}(y))  ]\|_{L^2_{x,y}(M^2)}\|f_0\|_{L^2(M)}^2.
\end{align}
Then, estimate \eqref{eq:L2toLowFreq} follows from the two-point mixing bound of Assumption \ref{ass:2pt}: for all $q>0$ sufficiently small, and all $(x,y) \in  M\times  M$, 
\[
|\E[ V(u_n)K_\times(\phi^{n}(x),\phi^{n}(y))]| \lesssim_q V(u_0)d_{ M} (x,y)^{-q} e^{-\alpha_0 n} \| K\|_{L^\infty} 
\]
for all $n \in \N_0$, and $d_M(x,y)$ denotes the distance between $x$ and $y$,
Thus, by taking $q < \frac{d}{2}$, we have
\[
\|\E[ V(u_n)K_\times(\phi^{n}(x),\phi^{n}(y)) ] \|_{L^2}^2 \lesssim e^{-\alpha_0 n}.
\]

\noindent \textit{Step 2:} We show that there exists $\alpha_1 >0$ such that
\begin{align}
\E[V(u_n)\|f_n\|_{H^{-\frac{d+3}{2}}}^2] \lesssim e^{\alpha_1 n}V(u_0)\| f_0\|_{H^{-1}}^2. \label{eq:H-1toLowFreq}
\end{align}
To prove this, 
we use the cocycle property of $D\phi^n$ and Assumption  \ref{ass:Lyapunov-Flow-Assumption}  in the case $k = 1$, and take $\alpha_1>0$ large enough so that 
\begin{align}\| \E [V(u_n)|D \phi^n|^2]\|_{L^\infty} & \leq e^{\alpha_1} \E[V(u_{n-1})|D \phi^{n-1}|^2] \\
&\leq e^{2\alpha_1}\E[V(u_{n-2})|D \phi^{n-2}|^2] \\
&\leq \ldots\\
& \leq e^{\alpha_1 (n-1)} \E[V(u_{1})|D \phi^{1}|^2] \leq e^{\alpha_1 n}V(u_0). 
\end{align}
Then, by writing $f_0 = \mathrm{div} ( \nabla \Delta^{-1}f_0)$, where $\Delta$ is the Laplace-Beltrami operator and $\mathrm{div}$ denotes the divergence on $M$, we can integrate by parts to get
\begin{align}
\E[V(u_n)\|f_n\|_{H^{-\frac{d+3}{2}}}^2] &\lesssim \|\nabla^2 K\|_{L^\infty}\| \E [V(u_n)|D_x \phi^n|^2]]\|_{L^\infty}  \|\nabla \Delta^{-1} f_0\|_{L^1}^2 \lesssim e^{\alpha_1 n} \|\nabla \Delta^{-1} f_0\|_{L^2}^2.
\end{align}
Finally, we note
\[
\|\nabla \Delta^{-1} f_0\|_{L^2}^2 = \langle (-\Delta)^{-1} f_0,f_0\rangle \lesssim \|f_0\|_{H^{-1}}^2,
\]
from which \eqref{eq:H-1toLowFreq} follows. \\

\noindent \textit{Step 3:} In the final step, we interpolate the two estimates to get  \eqref{eq:lowFreqDecay}. 
For each $h >0$, let  $f^{(h)}_0$ be a mollification of $f_0$ that satisfies
\[
\|f^{(h)}_0\|_{L^2} \lesssim h^{-\frac{p}{2}} \|f_0\|_{H^{-\frac{p}{2}}}, \quad \|f_0-f^{(h)}_0\|_{H^{-1}} \lesssim h^{1-\frac{p}{2}}  \|f_0\|_{H^{-\frac{p}{2}}}.
\]
Then, combining \eqref{eq:L2toLowFreq} and \eqref{eq:H-1toLowFreq}, we have
\begin{align}
\E[V(u_n)\|f_n\|_{H^{-\frac{d+3}{2}}}^2] &\lesssim \E[V(u_n)\|S_n f^{(h)}_0\|_{H^{-\frac{d+3}{2}}}^2]  + \E[V(u_n)\|S_n (f_0-f^{(h)}_0)\|_{H^{-\frac{d+3}{2}}}^2] \\
&\lesssim V(u_0) (e^{-\alpha_0 n}h^{-p} + e^{\alpha_1 n} h^{2-p})  \|f_0\|_{H^{-\frac{p}{2}}}^2.
\end{align}
We now choose $h = e^{-\frac{\alpha_0 n}{2 p}}$, and
\[
0< p \leq \frac{\alpha_0}{2(\alpha_0 + \alpha_1)}
\]
so that
\[
e^{-\alpha_0 n}h^{-p} \leq e^{-\alpha_0n/2}, \quad \text{ and }  \quad  e^{\alpha_1 n} h^{2-p} = e^{(\frac{\alpha_0}{2}+\alpha_1 -\frac{\alpha_0}{2p} )n} \leq e^{-\alpha_0n/2}.
\]
Taking $\alpha = \frac{\alpha_0}{2}$, this gives us  \eqref{eq:lowFreqDecay}.

\end{proof}

\subsection{Proof of the main theorem}\label{sec:proof}

Now we are ready to prove the main Theorem \ref{thm:AM}

\begin{proof}[Proof of Theorem \ref{thm:AM}] We fix the constants $\alpha, p \in (0,1)$ as in Lemma \ref{lemma:lowFreqMixing} and set $\eps = \frac{1}{5}$. In the statement of the main theorem, we shall prove the estimate with $\delta = \frac{p}{2}$. 

By Lemma \ref{lemma:time1decay}, there exists some $\beta^\prime$ such that for each $\eta^\prime \in (0,1/a^*)$
\begin{align}\label{eq:time1decayUpperBound}
\E[\langle \mathrm{Op}(a_{p,\eps}(u_1)) f_1,f_1\rangle] \leq  e^{-\Lambda(p)}\langle \mathrm{Op}(a_{p,\eps}(u))f_0,f_0\rangle+ C_{\eta^\prime}  V_{\beta^\prime,\eta^\prime}(u) \|f_0\|_{H^{-\frac{p+\eps}{2}}}^2.
\end{align}
Using interpolation, there exists $\varrho >0$ such that for any $h >0$, there is some constant $C_h$ for which the following holds:
\[
V_{\beta^\prime,\eta^\prime}(u) \|f_0\|_{H^{-\frac{p+\eps}{2}}}^2 \leq h\|f_0\|_{H^{-\frac{p}{2}}}^2 + C_h V_{\beta^\prime,\eta^\prime}(u)^{\varrho} \|f_0\|_{H^{-\frac{d+3}{2}}}^2.
\]
Applying G\r arding's inequality, Corollary \ref{cor:GardingSpecialized}, to the first term on the right-hand side, we have
\begin{equation}\label{eq:Gardinginterpolation}
V_{\beta^\prime,\eta^\prime}(u) \|f_0\|_{H^{-\frac{p+\eps}{2}}}^2 \leq h \langle \mathrm{Op}(a_{p,\eps}(u))f_0,f_0\rangle +C_{h} V_{\beta^\prime,\eta^\prime}(u)^\varrho \|f_0\|_{H^{-\frac{d+3}{2}}}^2.  
\end{equation}
We now set
\[
\beta_* := \varrho \beta^\prime, \quad \eta_*:= \varrho \eta^\prime,
\] 
so that 
$V_{\beta^\prime,\eta^\prime}(u)^\varrho = V_{\beta_*,\eta_*}(u)$. Under this re-scaling, $\beta_*\geq 1$ is an absolute constant, while $\eta_* \in (0,1/(a^* \varrho))$. For brevity sake, since $\beta_*$ and $\eta_*$ are now fixed, we will drop the subscripts and write $V =V_{\beta_*,\eta_*}$.

Combined with \eqref{eq:time1decayUpperBound}, this yields
\begin{align}\label{eq:time1decayUpperBoundRedux}
\E[\langle \mathrm{Op}(a_{p,\eps}(u_1)) f_1,f_1\rangle] \leq  (e^{-\Lambda(p)} + h)\langle \mathrm{Op}(a_{p,\eps}(u))f_0,f_0\rangle+ C_{h}  V(u) \|f_0\|_{H^{-\frac{d+3}{2}}}^2.
\end{align}
We now take $h = e^{-\Lambda(p)/2}-e^{-\Lambda(p)} $ and set $\mu = \min\{\Lambda(p)/2,\alpha\}$.
Then, by the Markov property, 
\begin{align}\label{eq:time1decayUpperBoundRedux2}
\E[\langle \mathrm{Op}(a_{p,\eps}(u_{n+1})) f_{n+1},f_{n+1}\rangle|\mathcal F_n] \leq  e^{-\mu}\langle \mathrm{Op}(a_{p,\eps}(u_n))f_n,f_n\rangle+ C V(u_n) \|f_n\|_{H^{-\frac{d+3}{2}}}^2.
\end{align}
By taking the full expectation of both sides, and iterating the above inequality, we have
\[
\E[\langle \mathrm{Op}(a_{p,\eps}(u_{n})) f_{n},f_{n}\rangle ] \leq e^{-\mu n} \langle \mathrm{Op}(a_{p,\eps}(u))f_0,f_0\rangle + C \sum_{k =0}^{n-1} e^{-(n-1-k)\mu} \E[ V(u_k) \|f_k\|_{H^{-\frac{d+3}{2}}}^2].
\] 
Then, by applying Lemma \ref{lemma:lowFreqMixing} to the sum (recalling that $\mu \leq \alpha$) to get
\begin{align}
\E[\langle \mathrm{Op}(a_{p,\eps}(u_{n})) f_{n},f_{n}\rangle ] &\leq e^{-\mu n} \langle \mathrm{Op}(a_{p,\eps}(u))f_0,f_0\rangle + C_\eta V(u) e^{-\mu n}\|f_0\|_{H^{-\frac{p}{2}}}^2\\
&\lesssim_\eta  V(u) e^{-\mu n} \|f_0\|_{H^{-\frac{p}{2}}}^2.
\end{align}
We apply Corollary \ref{cor:GardingSpecialized} once more to deduce 
\[
\E\left[\frac{1}{V(u_n)}\|f_n\|_{H^{-\frac{p}{2}}}^2\right] \lesssim_\eta  V(u) e^{-\mu n} \|f_0\|_{H^{-\frac{p}{2}}}^2 + \E[V(u_n) \|f_n\|_{H^{-\frac{d+3}{2}}}^2].
\]
for each $n$. Applying Lemma \ref{lemma:lowFreqMixing} once more to the second term on the right-hand side completes the proof.
\end{proof}

\appendix
\label{sec:appendix}
%!TEX root = main.tex
\section{Pseudo-differential calculus}\label{appendix:PDOs}
Here, we review the requisite results on pseudo-differential calculus on compact $d$-dimensional Riemannian manifolds $( M,g)$. 

In this section we fix a finite atlas $\{\varkappa_\iota : U_\iota \to U'_\iota\}_{\iota \in J}$, where each $\iota \in J$, we have that $U_\iota$ is an open set in $M$, and $V_\iota$ is an open set in $\R^n$. It will also be convenient to define $\zeta_\iota = \varkappa^{-1}_\iota$.

\subsection{Quantization on manifolds}
\label{sec:quantization_on_manifolds}

\begin{definition} Fix $m \in \R$ and $\rho \in (\frac{1}{2},1]$. We define the H\"ormander symbol classes $S_\rho^m(T^* M)$ to be the set of $a \in C^\infty(T^* M)$ satisfying the following. Given any trivialization $\Phi: T^* M|_U \to \R^{2n}$, we have
\[
\Big|\partial_x^{\alpha} \partial_{\xi}^{{\alpha'}} (a \circ \Phi^{-1})(x,\xi)\Big| \leq C_{\alpha,{\alpha'},\Phi} \langle \Phi^{-1}(x,\xi)\rangle^{m - \rho|{\alpha'}| + (1-\rho) |\alpha|},
\]
for all multi-indices $\alpha,{\alpha'}$, and all $(x,\xi)$ in the range of $\Phi$. In the above context, the  bracket $\langle z\rangle$ is given by $ \sqrt{1+ |z|^2}$ for any $z \in T^* M$, where
is defined in terms of the metric $|z|^2 = g_{\pi( z)}(z,z)$.

We define a family of seminorms that determine a Fr\' echet topology on $S_\rho^m$ as follows. For 
  each $k >0$, we define the seminorms
\begin{align}\label{eq:symbol_seminorms}
[a]_{k}^{m,\rho} = \sup_{\iota} \sup_{(x,\xi) \in U'_\iota \times \R^n} \sup_{|\alpha|,|{\alpha'}| \leq k}\Big|\partial_x^{\alpha} \partial_{\xi}^{\alpha'} (a(\zeta_\iota(x),D_x\zeta_\iota^{-\top}\xi))\langle D_x\zeta_\iota^{-\top}\xi\rangle^{-m + \rho|{\alpha'}| - (1-\rho) |\alpha|}  \Big|.
\end{align}

\end{definition}

Below, we give a generalization of the Kohn-Nirenberg quantization scheme for manifolds, following \cite{pflaum1998normal}.

\begin{definition}
Fix $\chi \in C^\infty(T M)$ to be a non-negative cutoff function satisfying the following: first, $\chi \equiv 1$ in an open neighborhood of $0_{T M}$; second, the map $(x,v) \mapsto (x,\exp_x(v))$ defines a diffeomorphism from the support of $\chi$ to an open neighborhood of the diagonal $\{(x,x) \ : \ x \in  M\} \subset  M\times  M$. 

Then, given any  symbol $a \in S^m_\rho$, we set
\[
\mathrm{Op}(a)f(x) = \frac{1}{(2\pi)^n} \iint_{T_x M\times T_x^*  M  } \chi(x,v) e^{i \langle v, \xi\rangle} a(x,\xi) f(\exp_x(v)) \,\dv \dee\xi 
\]
for any $f \in C^\infty( M)$.

\end{definition}

\begin{remark}
We remark that while this quantization depends on the choice of $\chi$, this construction is unique modulo the addition of infinitely smoothing operators. In contrast to \cite{faure2008semi}, we are only concerned with the principal symbol of our operator, although using this quantization scheme is convenient  in Lemma \ref{lemma:lowFreqMixing}.
An alternative, but equally viable quantization scheme can be found in Theorem 14.1 in \cite{zworski2022semiclassical}.
\end{remark}

\subsection{Function spaces on Riemannian manifolds}

Throughout this paper, we shall consider a general compact, smooth, $d$-dimensional Riemannian manifold $ M$. Here, we fix the function spaces throughout the paper. We define $L^2( M)$ to be the $L^2$ space on $ M$ with respect to the usual Riemannian volume form, with inner product
\[
\langle f,g\rangle = \int_{ M} f(x)g(x)\, \dx,
\]
for any two real valued $f,g \in L^2(M)$. Here, $\dx$ is shorthand for $\dee\mathrm{Vol}(x)$.

\begin{definition} 
We say $f \in H^s( M)$ to be the closure of $C^\infty( M;\C)$ with respect to the norm 
\[
\|f\|_{H^s}^2 :=\| f\|_{L^2}^2 +  \|\mathrm{Op}(\langle \xi\rangle^s) f\|_{L^2}^2.
\]
\end{definition}

We now define a norm-like quantity on the  group  of diffeomorphisms in $C^k( M, M)$, in order to track the regularity of such maps.

\begin{definition}\label{def:manifoldCkNorm}
   Let $k\geq 1$.  Given any $C^k$ diffeomorphism $\phi : M\to M$, we define the functional
\[
\mathcal Q_k(\phi) :=1+ \sup_{\iota,\zeta} \sup_{1\leq |\alpha |\leq k}  \max\{\|\partial_x^\alpha (\varkappa_\zeta\circ \phi\circ\varkappa_\iota^{-1})\|_{L^\infty(V_\iota \cap \phi^{-1}(U_\zeta);\R^n)},\|\partial_x^\alpha (\varkappa_\zeta\circ \phi^{-1}\circ\varkappa_\iota^{-1})\|_{L^\infty(V_\iota \cap \phi(U_\zeta);\R^n)}  \}.
\]
In the above, we set $\|\cdot \|_{L^\infty(\emptyset)} = 0$.
\end{definition}

\subsection{Estimates on pseudo-differential operators}

The first result we need is that the operators defined above are bounded between Sobolev spaces.  These results are classical when $ M$ is replaced with Euclidean space $\R^n$. 

\begin{proposition}\label{prop:manifoldPDObddness} Let $m \in \R$ and $\rho \in (\frac{1}{2},1]$.
Let $a \in S^m_\rho$. Then for each $s \in \R$, $\mathrm{Op}(a) $ is a bounded operator from $H^s$ to $H^{s-m}$. More precisely, for each $s \in \R$, there exists some $k \geq 0$ such that
\[
\|\mathrm{Op}(a) \|_{H^{s}\to H^{s-m}} \leq C_{m,\rho,s} [a]_{k}^{m,\rho}.
\]
In particular, for any $f \in H^{\frac{m}{2}}$,
\[
\langle \mathrm{Op}(a)f,f\rangle \leq C_{m,\rho} [a]_{k}^{m,\rho} \|f\|_{H^{\frac{m}{2}}}^2.
\]
\end{proposition}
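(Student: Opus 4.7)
The plan is to reduce the statement to the classical Calder\'on--Vaillancourt boundedness theorem on $\R^d$ via charts and a partition of unity. I would first fix a smooth partition of unity $\{\varphi_\iota\}_{\iota \in J}$ subordinate to the atlas and decompose
\[
\mathrm{Op}(a) = \sum_{\iota,\iota' \in J} M_{\varphi_{\iota'}}\,\mathrm{Op}(a)\, M_{\varphi_\iota},
\]
where $M_\varphi$ denotes multiplication by $\varphi$. The cutoff $\chi(x,v)$ in the definition of $\mathrm{Op}(a)$ is supported near the zero section of $TM$; by shrinking $\chi$ if necessary, we may assume $\exp_x(v)$ remains within a Riemannian ball of radius $\tfrac{1}{2}\min_{\iota\neq \iota'}\mathrm{dist}(\mathrm{supp}\,\varphi_\iota, M\setminus U_{\iota'})$. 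This makes every term in the sum with non-overlapping chart supports vanish, and each surviving term has both factors compactly supported inside a single common chart $U_\iota$.

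Next, I would pull each surviving piece $M_{\varphi_{\iota'}}\mathrm{Op}(a) M_{\varphi_\iota}$ through the chart map $\varkappa_\iota$. Writing $\tilde a_\iota(x,\xi) := a(\zeta_\iota(x), D_x\zeta_\iota^{-\top}\xi)$, the seminorms in \eqref{eq:symbol_seminorms} are exactly the Euclidean $S^m_\rho(\R^d)$ seminorms of $\tilde a_\iota$, controlled by $[a]^{m,\rho}_k$. The manifold quantization differs from the standard Kohn--Nirenberg quantization on $\R^d$ because of the exponential map $\exp_x(v)$; expanding $\exp_x(v) = x + v + O(|v|^2)$ in the chart, the resulting operator on $\R^d$ can be written as a standard Kohn--Nirenberg $\Psi$DO whose symbol lies in $S^m_\rho(\R^d)$ with seminorms bounded by $C_k[a]^{m,\rho}_k$. (This is the step where the hypothesis $\rho > 1/2$ is essential, since the lower-order correction terms produced by the Taylor expansion decrease in order by $2\rho - 1 > 0$ each step and can be absorbed via the standard asymptotic symbol calculus.)

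Then I would invoke the Calder\'on--Vaillancourt theorem: for $\rho \in (\tfrac{1}{2}, 1]$, any symbol $b \in S^m_\rho(\R^d)$ defines a bounded operator $H^s(\R^d) \to H^{s-m}(\R^d)$, with operator norm controlled by a fixed finite number of symbol seminorms $[b]_k^{m,\rho}$ (the number $k = k(s,m,\rho)$ depending only on $s, m, \rho$ and the dimension). Combined with the equivalence of the local Sobolev norm $\|\varphi_\iota f\circ \zeta_\iota\|_{H^s(\R^d)}$ with $\|\varphi_\iota f\|_{H^s(M)}$ (standard elliptic regularity for the manifold Sobolev norm defined via $\mathrm{Op}(\langle\xi\rangle^s)$), and summing the finitely many $(\iota,\iota')$ contributions, this yields
\[
\|\mathrm{Op}(a)\|_{H^s \to H^{s-m}} \leq C_{m,\rho,s}\,[a]_k^{m,\rho},
\]
as claimed. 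The quadratic form bound follows immediately by Cauchy--Schwarz with $s = m/2$.

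The only genuinely delicate step is the second one: verifying that the exponential-map quantization on the manifold, when expressed in a chart, really does give a classical Kohn--Nirenberg $\Psi$DO whose symbol has seminorms controlled by $[a]^{m,\rho}_k$. This is precisely the content of the normal coordinate construction in Pflaum's work cited by the authors, and the key observation is that the Jacobian of $v \mapsto \exp_x(v)$ in the chart is smooth, equal to $I$ at $v=0$, and can be expanded as an asymptotic series in the symbol class since $\rho > 1/2$.
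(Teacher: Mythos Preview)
Your proposal is correct and follows essentially the same approach as the paper. The paper does not give a detailed proof of this proposition; instead, in the remark following Theorem~\ref{theorem:egorov}, it simply notes that the result follows from the well-known $\R^n$ analogues via a partition of unity and an atlas, with the quantitative seminorm dependence tracked through the standard asymptotic expansions (citing H\"ormander, Theorems 18.1.7, 18.1.8, 18.1.17). Your outline---partition of unity subordinate to the atlas, reduction to charts via the normal-coordinate quantization of Pflaum, and invocation of Calder\'on--Vaillancourt on $\R^d$ with seminorm control---is precisely this standard construction spelled out in more detail than the paper itself provides.
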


We now state the weak G\r arding inequality on $ M$. 
\begin{proposition}\label{prop:ManifoldGarding} Let $m \in \R$ and $\rho \in (\frac{1}{2},1]$, and $c_0,c_1\in(0,\infty)$.
Let $a \in S^m_\rho (T^*  M)$, and suppose that it satisfies 
\[
c_0 |\xi|^m  \leq \mathrm{Re}(a(x,\xi))  
\]
for all $(x,\xi) \in T^* M$, with $|\xi| > c_1$. Then,  for all $f \in H^{\frac{m}{2}}( M)$, and $s >0$, there exists some $k$ such that
\[
 \frac{ c_0}{2}\|f\|_{H^{\frac{m}{2}}}^2 -  C_{m,s,\rho,c_0,c_1}\left\langle [a]^{m,\rho}_{k} \right\rangle  \|f\|_{H^{\frac{m}{2} - s}}^2 \leq \langle\mathrm{Op}(a)f,f\rangle_{L^2} .
\]

\end{proposition}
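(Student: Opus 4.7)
\bigskip

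\noindent\textbf{Proof plan for Proposition~\ref{prop:ManifoldGarding}.}

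The plan is to reduce to the classical Euclidean weak G\r arding inequality via an atlas-subordinate partition of unity, and then apply a square-root (Friedrichs-style) factorization of the elliptic part of $a$ in each chart.

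\emph{Localization and chart reduction.} First I would fix a smooth partition of unity $\sum_\iota \chi_\iota^2 \equiv 1$ with $\chi_\iota \in C_c^\infty(U_\iota)$, and write
\[
\langle \mathrm{Op}(a) f, f\rangle = \sum_\iota \langle \chi_\iota \mathrm{Op}(a)\chi_\iota f, f\rangle + \langle R f, f\rangle,
\]
where $R = \sum_\iota \chi_\iota [\chi_\iota, \mathrm{Op}(a)]$. By the symbol calculus, the commutators lie in $S^{m-(2\rho-1)}_\rho$ with seminorms linearly controlled by $[a]^{m,\rho}_{k}$ for some fixed $k$, so Proposition~\ref{prop:manifoldPDObddness} plus interpolation bounds $|\langle R f, f\rangle| \leq \tfrac{c_0}{4}\|f\|_{H^{m/2}}^2 + C_{c_0}\langle[a]^{m,\rho}_k\rangle\|f\|_{H^{m/2-s}}^2$ for any $s>0$. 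On the support of each $\chi_\iota$, pushing $\chi_\iota \mathrm{Op}(a)\chi_\iota$ forward via $\varkappa_\iota$ produces---modulo further operators of order $m-1$ (coming from the quadratic deviation of $\exp_x$ from the chart identity, see the definition of the quantization in Section~\ref{sec:quantization_on_manifolds})---the Euclidean Kohn--Nirenberg quantization of $\tilde a_\iota(x,\xi) := a(\zeta_\iota(x), D_x\zeta_\iota^{-\top}\xi)$. The ellipticity hypothesis transfers to $\mathrm{Re}(\tilde a_\iota)(x,\xi) \geq c_0'|\xi|^m$ for $|\xi|\geq c_1'$, with $c_0',c_1'$ depending only on the fixed atlas, and the Euclidean seminorms of $\tilde a_\iota$ are controlled by $[a]^{m,\rho}_k$ by definition~\eqref{eq:symbol_seminorms}.

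\emph{Euclidean G\r arding via the square-root trick.} In each chart, fix $\phi_H \in C^\infty(\R^d)$ equal to $1$ for $|\xi| \geq 2c_1'$ and vanishing for $|\xi| \leq c_1'$. The symbol
\[
q(x,\xi) := \phi_H(\xi)^{1/2}\sqrt{\mathrm{Re}(\tilde a_\iota(x,\xi)) - \tfrac{c_0'}{2}\phi_H(\xi)|\xi|^m}
\]
lies in $S^{m/2}_\rho$ since the expression under the square root is non-negative on the support of $\phi_H$. The composition formula gives
\[
\mathrm{Op}(q)^*\mathrm{Op}(q) = \mathrm{Op}\!\left(\phi_H\,\mathrm{Re}(\tilde a_\iota) - \tfrac{c_0'}{2}\phi_H^2|\xi|^m\right) + E,
\]
with $E \in S^{m-(2\rho-1)}_\rho$ whose seminorms are linear in those of $\tilde a_\iota$. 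Since $\mathrm{Op}(q)^*\mathrm{Op}(q)\geq 0$, and the elliptic operator satisfies $\langle \mathrm{Op}(\phi_H^2|\xi|^m)u,u\rangle \geq \tfrac{1}{2}\|u\|_{H^{m/2}}^2 - C\|u\|_{H^{m/2-s}}^2$ by a further application of the symbol calculus, and finally the low-frequency operator $\mathrm{Op}((1-\phi_H)\tilde a_\iota)$ is trivially bounded on all Sobolev spaces with norm controlled by $[a]^{m,\rho}_k$, pairing with $u=(\chi_\iota f)\circ\zeta_\iota$ and reassembling via $\sum_\iota \chi_\iota^2\equiv 1$ yields the claimed estimate.

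\emph{Main obstacle.} The hardest part is ensuring that the final constant depends \emph{linearly} on $\langle[a]^{m,\rho}_k\rangle$, as stated, rather than polynomially. The localization and chart-reduction steps produce only commutator-type errors (linear in seminorms of $a$); the delicate point is the square-root factorization, where $q \sim \sqrt{\tilde a_\iota}$ would na\"ively contribute quadratically. Linearity is recovered because the leading sub-principal term in the Kohn--Nirenberg expansion of $\mathrm{Op}(q)^*\mathrm{Op}(q) - \mathrm{Op}(q^2)$ involves only products of one derivative of $q$ with $q$ itself, which after combining yields a single derivative of $q^2 \sim \tilde a_\iota$. Carefully tracking this cancellation through all subprincipal terms---and controlling the derivatives of $q$ in terms of those of $\tilde a_\iota$ using the fact that $q$ is bounded below by a fixed positive constant on the support of $\phi_H$---constitutes the main technical content.
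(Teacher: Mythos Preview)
The paper does not actually prove this proposition: it is stated in the appendix and justified only by Remark~\ref{remark:comments_on_proof}, which says that the Euclidean analogue is classical (citing H\"ormander and Taylor), that passage to $M$ goes via a partition of unity subordinate to the fixed atlas, and that the quantitative dependence on the seminorms of $a$ ``are direct consequences of the proofs'' because those seminorms control the error terms in the standard asymptotic expansions. Your proposal is precisely a fleshed-out version of that remark---localize, push to charts, and run the Friedrichs square-root argument---so in spirit it coincides with what the paper has in mind.

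Your identification of the linear dependence on $\langle[a]^{m,\rho}_k\rangle$ as the main obstacle is apt, and your observation that the leading subprincipal term of $\mathrm{Op}(q)^*\mathrm{Op}(q)-\mathrm{Op}(q^2)$ collapses to $\tfrac{1}{2i}\partial_x\partial_\xi(q^2)$ (hence linear in $a$) is correct. Be aware, though, that this cancellation alone does not obviously yield linearity for \emph{arbitrary} $s>0$: once the error is placed in $H^{(m-(2\rho-1))/2}$ with a constant linear in $[a]_k$, absorbing it into $\tfrac{c_0}{4}\|f\|_{H^{m/2}}^2$ via interpolation against $H^{m/2-s}$ with $s>\rho-\tfrac12$ forces the interpolation parameter to scale like $c_0/[a]_k$, which reintroduces a power $[a]_k^{2s/(2\rho-1)}$. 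The same issue already arises in your commutator step for $R$. A cleaner route to linearity at the sharp-G\r arding scale $s=\rho-\tfrac12$ is to bypass the square root entirely and compare $\mathrm{Op}$ with a positive (anti-Wick) quantization, whose discrepancy with $\mathrm{Op}$ is order $m-(2\rho-1)$ with seminorms genuinely linear in those of the symbol; for larger $s$ one still interpolates. In any case this does not threaten the paper's applications: in Corollary~\ref{cor:GardingSpecialized} both $c_0^{-1}$ and $[a]_k$ are bounded by $V_{\beta,\eta}(u)$, and any fixed power $V_{\beta,\eta}^N$ is again of the form $V_{N\beta,N\eta}$, which is all that the proof of Theorem~\ref{thm:AM} requires.
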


We now state a version of Egorov's theorem on manifolds.

\begin{theorem}\label{theorem:egorov} Let $m \in \R$ and $\rho \in (\frac{1}{2},1]$.
Let $a \in S^m_\rho(T^* M)$, $A = \mathrm{Op} (a)$,  and let $\phi:  M\to  M$ be a smooth diffeomorphism. Define the cotangent lift of $\phi$ to be the map  $\widetilde \phi : T^* M \to T^* M$ given by
\[
\widetilde \phi: (x,\xi) \mapsto (\phi(x), D_x\phi^{-\top} \xi).
\]
for any $x \in  M$ and $\xi \in T_x^* M$. Moreover, given any  map $\psi$ between manifolds, we define its pullback $\psi^* f = f\circ \psi$ for scalar valued $f$ where the composition is well-defined.

The operator $A^\phi := \phi^* A (\phi^{-1})^{*}$ can be decomposed
\[
A^\phi = \mathrm{Op}({\widetilde \phi}^* a) + R.
\]
Here, $\widetilde \phi^* a \in S^m_\rho$. Moreover, for all $s$, there exists $k_0  \in \N_0$ such that
\[
\|A^\phi\|_{H^{s} \to H^{s-m}}+ \|R\|_{H^{s}\to H^{s-m+2\rho-1}} \lesssim C_{m,s,\rho}\mathcal Q_{k_0}(\phi)^{k_0} [a]_{k_0}^{m,\rho}.
\]

\end{theorem}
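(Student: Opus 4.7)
The plan is to reduce the theorem to a local statement in coordinate charts, prove the standard Euclidean Egorov decomposition there with careful tracking of the dependence on $\phi$, and then glue pieces together via the finite atlas $\{\varkappa_\iota\}$. Concretely, I would first fix a partition of unity $\{\chi_\iota\}$ subordinate to the atlas and decompose
\[
A^\phi = \sum_{\iota,\zeta}\chi_\zeta \cdot A^\phi \cdot \chi_\iota,
\]
where each summand is supported in a pair of charts. Using the compact support of the cutoff $\chi(x,v)$ in the quantization, one may assume $\zeta$ is such that $\phi$ maps $\mathrm{supp}\,\chi_\iota$ into a neighborhood of the support of $\chi_\zeta$, otherwise the contribution is infinitely smoothing (and one only needs to verify the resulting smoothing constant depends polynomially on $\mathcal{Q}_{k_0}(\phi)$).

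For each local piece, after passing to coordinates via $\varkappa_\iota$ and $\varkappa_\zeta$, the operator $A^\phi$ takes the Euclidean form
\[
A^\phi f(x) = \frac{1}{(2\pi)^n}\iint e^{i\langle \Psi(x,y),\xi\rangle} \tilde a(x,\xi) f(y)\, J(x,y)\,\dee y\,\dee \xi,
\]
where $\Psi(x,y)$ encodes the composition of $\exp^{-1}_{\phi(x)}\circ\phi$ with the coordinate charts, and where $\tilde a$, $J$, and $\Psi$ depend on $\phi$ in a controlled way. Near the diagonal one writes $\Psi(x,y)=M(x,y)(\phi(y)-\phi(x))$ with $M(x,x)=\mathrm{Id}$, then performs the change of variable $\eta = M(x,y)^{-\top}\xi$ to convert the phase into $\langle \phi(y)-\phi(x),\eta\rangle$. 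A further change $w = \exp_x^{-1}(y)$ recasts the phase as $\langle D_x\phi\, w,\eta\rangle + \tfrac{1}{2}\langle Q(x,w)w,\eta\rangle w$ where $Q$ absorbs the second-order remainder in Taylor's expansion of $\phi(y)-\phi(x)$. Substituting $\eta \mapsto D_x\phi^{-\top}\eta$ makes the leading phase stationary and produces the principal symbol $a(\phi(x),D_x\phi^{-\top}\eta)=\widetilde\phi^* a$. The second-order term in the phase is handled by a standard one-step stationary-phase/Kuranishi-type expansion (expand $e^{i\langle Q w,\eta\rangle w/2}$ to first order), which yields a symbol in $S^{m+1-2\rho}_\rho$, hence a remainder $R$ of order $m-(2\rho-1)$ as claimed.

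The main technical obstacle is tracking the $\mathcal{Q}_{k_0}(\phi)^{k_0}$ dependence through all of these steps, because each change of variable brings derivatives of $\phi$ and $\phi^{-1}$ into the symbol. For the principal piece, one differentiates $\widetilde\phi^* a(x,\xi)=a(\phi(x),D_x\phi^{-\top}\xi)$ via the Fa\`a di Bruno formula: taking $|\alpha|+|\alpha'|\le k$ derivatives produces at most a polynomial in $\|D^j\phi\|_{L^\infty}$ and $\|D^j\phi^{-1}\|_{L^\infty}$ for $j\le k+1$, bounded by $\mathcal{Q}_{k+1}(\phi)^{k+1}$, times the seminorm $[a]_k^{m,\rho}$. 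For the remainder, the stationary-phase expansion writes
\[
R = \mathrm{Op}(b),\qquad b(x,\xi)=\sum_{|\gamma|=1} c_\gamma(x)\,\partial_\xi^\gamma\partial_y^\gamma\big[\cdots\big]\big|_{y=x},
\]
whose seminorms are controlled analogously by $\mathcal{Q}_{k_0}(\phi)^{k_0}[a]_{k_0}^{m,\rho}$ for a suitably large $k_0=k_0(m,s,\rho)$. This is essentially a bookkeeping exercise, but it is the crux of the quantitative statement.

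Once $\widetilde\phi^* a\in S^m_\rho$ and $b\in S^{m+1-2\rho}_\rho$ are established with the stated seminorm bounds, the $H^s\to H^{s-m}$ estimate on $A^\phi$ and the $H^s\to H^{s-m+2\rho-1}$ estimate on $R$ follow immediately from Proposition \ref{prop:manifoldPDObddness}, increasing $k_0$ if necessary to cover the number of seminorms needed. Finally one verifies that the infinitely smoothing contributions coming from off-diagonal chart pairs and from the transition functions between charts are controlled by a polynomial in $\mathcal{Q}_{k_0}(\phi)$, completing the proof.
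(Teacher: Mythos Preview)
Your proposal is correct and aligns with the paper's approach. In fact, the paper does not give a detailed proof of this theorem at all: it merely remarks that the Euclidean analogues are classical (citing H\"ormander, Theorem~18.1.17), that one passes to the manifold via a partition of unity and the atlas, and that the quantitative dependence on $[a]_{k_0}^{m,\rho}$ and $\mathcal{Q}_{k_0}(\phi)$ follows by tracking constants through the asymptotic expansions (H\"ormander, Theorems~18.1.7--18.1.8). Your sketch---localizing to charts, performing the Kuranishi change of variables, expanding the phase, and bookkeeping the $\phi$-dependence via Fa\`a di Bruno---is precisely the standard argument behind those references, so you are supplying more detail than the paper itself does, but along the same lines.
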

\begin{remark} \label{remark:comments_on_proof}
While these results do not appear to be available in the literature as stated, their analogues on $\R^n$ are well-known, with the caveat that such results usually do not quantify the implicit constant in terms of seminorms on $a$. See  \cite{taylor1981pseudodifferential,hormander2007analysis}.
To recover the results on $M$, one can use a partition of unity and an atlas via a standard construction, and Egorov's theorem for properly supported pseudo-differential operators on $\R^n$ (see for instance \cite{hormander2007analysis},  Theorem 18.1.17).

 The quantitative bounds in terms of the seminorms on $a$, as well as  $\mathcal Q_k(\phi)$ in the case of Theorem \ref{theorem:egorov}, are direct consequences of the proofs of these results, as these seminorms appear in bounding error terms in the asymptotic expansions of pseudo-differential operators (see \cite{hormander2007analysis},  Theorems 18.1.7 and 18.1.8).
\end{remark}

% Bibliography
\addcontentsline{toc}{section}{References}
\bibliography{refs}
\bibliographystyle{plain}

\end{document}